\documentclass[12pt]{amsart}
\usepackage{amsmath,amssymb,mathtools}
\usepackage{microtype}
\usepackage[margin=1.08in]{geometry}
\usepackage{enumitem}
\usepackage{booktabs,longtable,array}
\usepackage{xcolor}
\usepackage[colorlinks=true,linkcolor=blue!55!black,citecolor=blue!55!black,urlcolor=blue!55!black]{hyperref}
\usepackage{fancyhdr}
\usepackage{esint}

\numberwithin{equation}{section}
\newtheorem{theorem}{Theorem}[section]
\newtheorem{proposition}[theorem]{Proposition}
\newtheorem{lemma}[theorem]{Lemma}
\newtheorem{conjecture}[theorem]{Conjecture}
\newtheorem{corollary}[theorem]{Corollary}
\newtheorem{remark}[theorem]{Remark}

\newcommand{\II}{\mathcal I}
\newcommand{\Sph}{\mathbb S}
\newcommand{\dd}{\,d}
\newcommand{\R}{\mathbb{R}}
\newcommand{\BB}{\mathcal{B}}
\newcommand{\HH}{\mathcal{H}}
\newcommand{\RR}{\mathcal{R}}
\newcommand{\KK}{\mathcal{K}}
\newcommand{\NN}{\mathcal{N}}
\newcommand{\QQ}{\mathcal{Q}}
\newcommand{\PP}{\mathcal{P}}
\newcommand{\VC}{\mathcal{V}}
\newcommand{\EE}{\mathcal{E}}
\newcommand{\eps}{\varepsilon}
\newcommand{\re}{\operatorname{Re}}
\newcommand{\D}{\mathbb{D}}
\newcommand{\C}{\mathbb{C}}
\newcommand{\im}{\operatorname{Im}}
\newcommand{\dist}{\operatorname{dist}}

\newcommand{\ind}{\mathbf 1}

\newcommand{\VV}{\mathrm{Vol}}
\newcommand{\diam}{\mathrm{diam}}

\author{Zhehui Wang}
\address{Zhehui Wang, School of Sciences, Great Bay University, Dongguan 523000, China}
\email{wangzhehui@gbu.edu.cn}

\title[The volumes of nodal and positivity sets of harmonic functions]{Sharp lower bounds for the volumes of nodal and positivity sets of harmonic functions}
\date{}

\begin{document}
\begin{abstract}
Let $B=B(p, 1)\subset\R^n$ be a unit ball and $n\ge 3$. We prove that there are positive constants $c$ and $C$, depending only on $n$, such that every non-zero real-valued harmonic function $u: 4B\to \R$ with $u(p)=0$ satisfies
$$
    \HH^{n-1}\bigl(\{u=0\}\cap 2B\bigr)\ge C\NN,
$$
and 
$$
  \HH^n\bigl(\{u>0\}\cap  {\frac 12}B\bigr)
  \ge c\bigl(\log(1+\NN)\bigr)^{1-n},
$$
where $\NN$ is the doubling index defined by $$\NN=\log_2\frac{\sup_{B}|u|}{\sup_{\frac{1}{2}B}|u|}.$$ The first estimate confirms a folklore conjecture on the nodal volume of harmonic functions, and its linear dependence on $\NN$ is optimal. The second estimate extends the planar result of Nazarov, Polterovich, and Sodin to higher dimensions, and the logarithmic order is optimal. As a further consequence of the ideas developed in the proof, we obtain an alternative proof of Nadirashvili's conjecture that does not rely on the multiscale analysis.
\end{abstract}
\maketitle

\section{Introduction}\label{sec:intro}
The size of a nodal set measures the oscillation of a solution to an elliptic equation, while the doubling index characterizes its growth rate. Establishing a connection between these two quantities is an important problem in nodal geometry. 

For Laplace eigenfunctions 
$-\Delta_g\varphi_\lambda=\lambda\varphi_\lambda$ on an $n$-dimensional closed smooth Riemannian manifold $(M,g)$, Yau's conjecture \cite{Yau1982} predicts
the two-sided estimate
$$
  c_{g}\sqrt{\lambda}
  \le
  \HH^{n-1}\bigl(\{\varphi_\lambda=0\}\bigr)
  \le
  C_{g}\sqrt{\lambda},
$$
where $c_{g},C_{g}>0$ depend only on the Riemannian metric on $M$ and are independent of the eigenvalue $\lambda$. Donnelly and Fefferman \cite{DonnellyFefferman1988} proved the conjectured two-sided estimate for real-analytic metrics. In the smooth category, the sharp lower
bound in dimension two goes back to Br{\"u}ning \cite{Bruning1978}. Important lower bounds in general dimensions were subsequently obtained by Colding and Minicozzi \cite{ColdingMinicozzi2011} (also see \cite{Steinerberger2014} for a proof using the heat flow), and Sogge and Zelditch \cite{SoggeZelditch2011, SoggeZelditch2012}. Later, Logunov \cite{Logunov2018} proved the conjectured lower bound in all dimensions. The upper bound direction of Yau's conjecture remains open; see \cite{Dong1992, DonnellyFefferman1990, HardtSimon1989, Logunov2018-2, Nadirashvili1988} and the references therein for important progress.

At the local level, the structure and size of nodal sets of solutions to elliptic equations were studied by Hardt and Simon \cite{HardtSimon1989}. Quantitative connections among vanishing order, frequency, growth, and nodal measure were developed in the work of Garofalo and Lin \cite{GarofaloLin1986} and Lin \cite{Lin1991}; see also Han's account \cite{Han} of nodal sets of harmonic functions. Connections between local growth and nodal geometry appear in the work of Nazarov, Polterovich and Sodin \cite{NazarovPolterovichSodin2005} on the local distribution of signs of eigenfunctions on surfaces. Roy-Fortin \cite{RoyFortin2015} established two-sided estimates for nodal length on closed surfaces in terms of the average local growth of eigenfunctions at the wavelength scale. Quantitative propagation of smallness provides a complementary connection between growth and the size of sublevel sets. A broader account of these developments and their relations to Yau's conjecture can be found in \cite{LogunovMalinnikovaReview2019}. More related results can be found in \cite{HardtEtAl1999, Kukavica1998, Mangoubi2013, NaberValtorta2017} and the references therein.

The present paper studies quantitative local questions concerning the volumes of nodal and positivity sets of harmonic functions in the Euclidean setting, with growth measured by the doubling index rather than by an eigenvalue. No result for general variable-coefficient equations is asserted here. Recall that Logunov's proof \cite{Logunov2018} of the lower bound direction of Yau's conjecture proceeds through a local theorem for harmonic functions, resolving a conjecture of Nadirashvili \cite{Nadi} and yielding a uniform local lower bound for the nodal volume of harmonic functions vanishing at the center of a ball \cite[Theorem 1.2]{Logunov2018}. For a uniform lower bound of nodal volume in the setting of elliptic homogenization, see Li and Ying \cite{LiYing-1}, and the related paper by Li, Wang and Ying \cite{LiWangYing}. Our aim is twofold. First, we strengthen this uniform local lower bound by providing a quantitative relation between the nodal volume and the growth of the harmonic function. Second, we establish a lower bound of optimal logarithmic order for the volume of the positivity set, extending the planar result of Nazarov, Polterovich and Sodin  \cite[Theorem 2.2]{NazarovPolterovichSodin2005} to higher dimensions and answering the question
raised in \cite[Section 7.4]{NazarovPolterovichSodin2005}. 

For $x\in \R^n$ and $r>0$, write the Euclidean ball $B=B(x ,r)$, and $kB=B(x ,kr)$ for a positive constant $k$. For any non-zero harmonic function $u$ defined in $2B$, the doubling index of $u$ in $B$ is defined by
$$\NN_u(B)=\log_2\frac{\sup_{2B}|u|}{\sup_{B}|u|}.$$
If no confusion arises, we drop the subscript $u$ and simply write $\NN(B)$. When the ball is centered at the center of homogeneity, the doubling index of a homogeneous harmonic polynomial is equal to its degree, so $\NN(B)$ can be regarded as a scale dependent analogue of degree.

\subsection{The volume of the nodal set}
In \cite{LPS}, Logunov,  Priya and Sartori established the following almost sharp lower bound for the nodal volume of harmonic functions.

\begin{theorem}[{\cite[Theorem 1.2]{LPS}}]\label{thm:LPS}
Let $B=B(p, 1)\subset\R^n$ be a unit ball and $n\ge 3$. For every $\eps>0$, there exists a positive constant $c_{n, \eps}$ depending only on $n$ and $\eps$ such that for every non-zero harmonic function $u: 4B\to \R$ with $u(p)=0$, we have
$$\HH^{n-1}\bigl(\{u=0\}\cap 2B\bigr)\ge c_{n, \eps}\NN\bigl(\frac{1}{2}B\bigr)^{1-\eps}.$$
\end{theorem}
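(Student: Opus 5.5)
We would follow the strategy of Logunov, Priya and Sartori, which combines Logunov's local lower bound with a multiscale counting argument. The statement is cited from \cite{LPS}, so we only outline the mechanism.

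The input is Logunov's uniform local bound \cite[Theorem 1.2]{Logunov2018}: if a harmonic function vanishes at the center of a ball $Q$ of radius $r$, then $\HH^{n-1}(\{u=0\}\cap 2Q)\ge c_n r^{n-1}$. Given this, the nodal-volume bound reduces to a packing problem. We must find, inside $2B$, many disjoint balls $Q_i$ of radii $r_i$, each centered at a zero of $u$ and with doubled copies $2Q_i\subset 2B$ pairwise disjoint, such that
$$\sum_i r_i^{n-1}\ge c\,\NN\bigl(\tfrac12 B\bigr)^{1-\eps}.$$

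The plan has three steps.

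\textbf{Step 1 (concentration of the zero set near the center).} Write $N=\NN(\frac12B)$. By standard growth arguments (Garofalo--Lin monotonicity of the frequency), the doubling index of $u$ stays comparable to $N$ on a range of balls around $p$. Since $u(p)=0$, the oscillation of $u$ forces sign changes at scale roughly $1/N$ near $p$. Concretely, we show that a cube of side about $1/N$ adjacent to $p$ contains zeros in a large fraction of its subcubes.

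\textbf{Step 2 (propagation to many disjoint small balls).} Use the "simplex lemma" and the "hyperplane lemma" of Logunov, which bound how the doubling index can be distributed among subcubes. Partition a fixed subcube $Q_0\subset B$ into $K^n$ equal subcubes. Call a subcube \emph{bad} if its doubling index is at least $N/(1+c)$. One then shows that at most $K^{n-1-\delta}$ subcubes are bad, while each good subcube with a zero contributes nodal volume at least about $(\text{side})^{n-1}$. Iterating over scales $K^{-j}$ with $K=K(\eps)$ large, each iteration loses at most a factor $K^{\eps}$. After $\log_K N$ iterations we reach a total of about $N\cdot N^{-\eps}$ disjoint zero-centered balls of radius $\sim 1/N$ in $2B$. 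Each such ball contributes $N^{1-n}$, but we get about $N^{n}$ of them in a layer, so the total $\sum r_i^{n-1}$ is about $N^{1-\eps}$.

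\textbf{Step 3 (counting the zero-centered balls).} The main obstacle is proving that sign changes occur in essentially $N^{n-\eps}$ of the wavelength cells. A cell at scale $1/N$ on which $u$ does not vanish has doubling index $O(1)$ there, by Harnack's inequality. Frequency considerations then show that the sum of the logarithmic growth over cells along a ray from $p$ must be at least of order $N$. This forces at least about $N^{1-\eps}$ nodal cells on each of roughly $N^{n-1}$ rays through a sector. Cells with bounded doubling index contribute their full $(1/N)^{n-1}$ of nodal volume.

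The $\eps$-loss comes from the iteration constant in Logunov's simplex lemma, which costs $K^{\eps}$ per scale. We expect this counting step, namely controlling the number of cells with large doubling index uniformly across scales, to be the hard part. It is also precisely where the present paper will need a new idea to remove the $\eps$.
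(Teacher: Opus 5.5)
Your outline has a genuine gap: the step everything rests on is never proved, and several of the heuristics offered for it are false. Reducing to Theorem \ref{logunovlowerbound} is fine, but you then need about $N^{n-\eps}$ disjoint zero-centered balls of radius $\sim 1/N$. Your count of ``$N\cdot N^{-\eps}$ balls'' would give only $\sum r_i^{n-1}\approx N^{2-n-\eps}$, and none of Steps 1--3 produces the required number.

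Step 1 is false as stated. Let $P_N$ be a homogeneous harmonic polynomial of degree $N$ with $\sup_{\Sph^{n-1}}|P_N|=1$, and set $u=x_1+2^{N/2}P_N$. Then $\NN(B_{\frac12})\approx N/2$, but $u=x_1+O(0.85^N)$ on $B_{0.6}$. So the balls $B_r$ with $r\le\frac14$ have doubling index $\approx1$, and near $p=0$ the zero set is a single sheet; the oscillation lives near $\partial B_1$, not near $p$.

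In Step 2, the hyperplane and simplex lemmas, as you use them, give an \emph{upper} bound on the number of subcubes whose index stays close to $N$. That says nothing about how many of the remaining subcubes contain zeros, or keep index $\gtrsim N/K$ so that the iteration can continue. The loss ``$K^{\eps}$ per scale'' is therefore asserted, not derived.

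The ray argument in Step 3 fails too. A zero-free cell of side $1/N$ costs only a Harnack factor $C_n$, and a ray meets $\sim N$ cells, so zero-free cells alone can carry growth $C_n^N$, which already accommodates index of order $N$. For example, $e^{Nx_1}\cos(Nx_2)-1$ vanishes at $0$, has index $\asymp N$, and grows like $e^{Nx_1}$ along the positive $x_1$-axis, yet it has no zeros with $x_1\ge 1/N$ and $|x_2|<1/N$. Conversely, nodal cells are subject to no $O(1)$ bound, so a few of them could carry most of the growth.

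What is missing is exactly the new content of \cite{LPS}. That is a lower bound for the \emph{distribution} of the doubling index among subcubes, proved by multiscale induction in balls of stable growth; this is where the $\eps$-loss arises. It is followed by the removal of stable growth through frequency layers, the stable ball Lemma \ref{lem:stable-ball}, and a weighted H\"older summation. Stable growth is indispensable there, since otherwise the growth can sit in a thin shell while most subcubes have small index. Your outline has no counterpart to either step.

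The present paper does not reprove the statement along those lines. It is quoted from \cite{LPS} and follows at once from Theorem \ref{thm:main}. For $0<\eps<1$, use $\NN^{1-\eps}\le\max\{1,\NN\}$ together with Theorem \ref{logunovlowerbound}. For $\eps\ge1$, use also that $\NN$ is bounded below when $u(p)=0$, as in Case 1 of the proof of Theorem \ref{sv:thm:main}.

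Theorem \ref{thm:main} avoids cell counting altogether. Proposition \ref{prop:gradient-upper} bounds $\int_{B_{8/5}}|\nabla\log|u||^{1/2}$ above by $C(1+\HH^{n-1}(\{u=0\}\cap B_2))^{1/2}$, using Theorem \ref{logunovlowerbound} and a covering. In stable balls, the same integral is bounded below by $c\,\NN^{1/2}$, via the compactness Lemma \ref{lem:compactness} applied to radial holomorphic extensions on a set of directions of positive measure (Lemma \ref{lem:directions}). The fractional exponent is what makes both bounds possible: $|\partial_r\log|u||$ is not integrable across a zero, so first-moment bookkeeping along rays, like yours, carries no information.
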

For a fixed $0<\eps<1$, this estimate recovers the
uniform local lower bound proved by Logunov and strengthens
it quantitatively for functions with large doubling index;
see also \cite[Claim~A.4]{LPS}. A folklore conjecture, recorded as Conjecture 1.3 in \cite{LPS}, predicts that the nodal volume admits a lower bound linear in the doubling index. 
\begin{conjecture}[{\cite[Conjecture 1.3]{LPS}}]\label{conj}
    Let $B=B(p, 1)\subset\R^n$ be a unit ball and $n\ge 3$. For every non-zero harmonic function $u: 4B\to \R$ with $u(p)=0$, the following lower bound holds:
\begin{equation}\label{conjstat}
\HH^{n-1}\bigl(\{u=0\}\cap 2B\bigr)\ge c_n\NN\bigl(\frac{1}{2}B\bigr),\end{equation}
for some positive constant $c_n$ depending only on $n$.
\end{conjecture}
On the upper bound side, the linear control of nodal volume by quantitative growth goes back to the work of Donnelly--Fefferman \cite{DonnellyFefferman1988} and Lin \cite{Lin1991}. Donnelly--Fefferman developed the analytic growth approach to nodal sets and obtained linear bounds in terms of a suitable growth parameter, while Lin proved the corresponding estimate for harmonic functions in terms of the frequency; see also Han \cite{Han} for a detailed account. Using the standard comparison between frequency and the doubling index on comparable scales, one obtains the following theorem.

\begin{theorem}[\cite{DonnellyFefferman1988, Lin1991, Han}]\label{ndupper}
    Let $B\subset\R^n$ be a unit ball and $n\ge 3$. There exists a positive constant $C_n$, depending only on $n$, such that
$$\HH^{n-1}\bigl(\{u=0\}\cap B\bigr)\le C_n\NN\bigl(B\bigr),$$
 for every non-zero harmonic function $u: 2B\to \R$.
\end{theorem}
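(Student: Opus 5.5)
The plan is to follow the classical route of Donnelly--Fefferman and Lin: pass from the doubling index to the frequency function, count zeros of $u$ along line segments via the Jensen-type bound for analytic functions of one complex variable, and integrate over lines with the Crofton formula. The approach rests on the Crofton formula
$$
\HH^{n-1}\bigl(\{u=0\}\cap B\bigr)\le c_n\int_{\Sph^{n-1}}\int_{\omega^\perp}\#\bigl\{t:\ u(y+t\omega)=0,\ y+t\omega\in B\bigr\}\dd y\dd\omega .
$$
This reduces the statement to bounding, for almost every line $\ell$ meeting $B$, the number of zeros of $u|_{\ell\cap B}$ by $C_n\NN(B)$ (up to an additive constant, which is absorbed since we may assume $\NN(B)\gtrsim 1$ when $u$ changes sign; a harmonic function with $\NN(B)$ small has bounded nodal volume by the same argument).

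\emph{Step 1: holomorphic extension.} Since $u$ is harmonic in $2B$, it extends to a holomorphic function $U$ on a complex neighborhood $\{z\in\C^n:\ \re z\in \tfrac{3}{2}B,\ |\im z|<\delta_n\}$. Cauchy estimates on derivatives of harmonic functions give $\sup|U|\le C\sup_{2B}|u|$ there. Hence each restriction $f(\zeta)=U(y+\zeta\omega)$ is holomorphic on a complex disc or stadium $D_\ell$ containing $\ell\cap B$ in a fixed compact subset, with $\sup_{D_\ell}|f|\le C\sup_{2B}|u|$.

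\emph{Step 2: Jensen counting.} For a holomorphic $f$ on a disc of radius $R$ centred at $z_0$, the number of zeros in the disc of radius $r<R$ is at most $\log\bigl(\sup|f|/|f(z_0)|\bigr)/\log(R/r)$. We cannot guarantee $|f(z_0)|$ large at a fixed point. Instead, we cover $\ell\cap B$ by boundedly many discs, each centred at a point $x_0\in\ell$ where $|u(x_0)|$ is comparable to $\sup_{B}|u|$.

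\emph{Step 3: the main obstacle.} This is ensuring that $|u|$ is not too small on most of each line, that is, converting the global ratio $\sup_{2B}|u|/\sup_B|u|=2^{\NN(B)}$ into lower bounds of $|u|$ along lines. The tool is the $L^2$ version plus remez-type inequalities. By the doubling property (almost monotonicity of frequency, à la Garofalo--Lin), $\|u\|_{L^2(\frac{1}{2}B)}\ge 2^{-C\NN(B)-C}\sup_{2B}|u|$. Then a one-dimensional Remez inequality for functions with bounded holomorphic extension (as in Nadirashvili/Donnelly--Fefferman) shows that, for a set of lines $\ell$ of measure bounded below, $\sup_{\ell\cap B}|u|\ge 2^{-C\NN(B)}\sup_{2B}|u|$. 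Lines where this fails are handled by rotating/averaging. I would first try the cleaner alternative: apply Jensen in the $n$-dimensional sense, averaging the one-dimensional Jensen formula over directions through a point $x_0\in\frac12 B$ maximizing $|u|$, and over $x_0$ in a small ball where $|u|\ge\frac12\sup_{\frac12B}|u|$ (which exists by gradient bounds at scale $1/\NN$). This gives, for lines through such centers, zero counts $\le C\log(C\sup_{2B}|u|/\sup_{\frac12 B}|u|)\le C(\NN(B)+1)$ after using almost monotonicity to compare $\NN(\frac12B)$ with $\NN(B)$. Since every line meeting $B$ can be covered, via a bounded number of such pencils from finitely many centres in a covering of $B$ by small balls with comparable doubling indices, the Crofton integral is bounded by $C_n\NN(B)$, yielding the theorem.
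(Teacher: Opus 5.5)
Your framework (Crofton formula plus one-variable Jensen counting on complexified lines) is the classical one behind the results the paper cites. The paper itself does not reprove the statement; it imports Lin's frequency bound and converts frequency to doubling index. However, the step you flag as the main obstacle is not resolved, and the reduction you aim for is false.

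\textbf{The per-line bound fails.} A bound $\#(\{u=0\}\cap\ell\cap B)\le C_n\NN(B)$ for almost every line, even up to an additive constant, does not hold. Take $n=3$, $B$ the unit ball at the origin, and $u(x)=x_1+2^{-K}\re\bigl((x_2+ix_3)^K\bigr)$. Then $\sup_{2B}|u|\le 3$ and $\sup_B|u|\ge 1$, so $\NN(B)\le\log_2 3$. Yet on each line $\{(a,t,\frac12):t\in\R\}$ with $|a|<4^{-K}$, the restriction $a+2^{-K}|t+\frac i2|^K\cos\bigl(K\arg(t+\frac i2)\bigr)$ changes sign about $2K/3$ times inside $B$. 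This survives small perturbations of the line, so a set of lines of positive measure carries $\gtrsim K$ zeros. The Crofton integral stays bounded only because such lines are rare, so any correct argument must average over lines.

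\textbf{Your alternative does not average.} Lines meeting a ball of radius $\rho$ have Crofton measure $\asymp\rho^{n-1}$. So finitely many pencils through good balls of radius $\sim 1/\NN$ capture lines of total measure $O(\NN^{1-n})$, and a generic line meeting $B$ misses all of them. The phrase ``handled by rotating/averaging'' does not fill this gap.

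\textbf{The missing idea} is to stop looking for a good point on each line. Write $\ell_{x,\omega}$ for the line through $x$ in direction $\omega$, fix a small dimensional $r>0$, and set $B'=(1+r)B$. Parametrize lines by base point and direction, and apply Jensen on the complex disc of radius $2r$ centred at $x$ itself:
$$
\HH^{n-1}\bigl(\{u=0\}\cap B\bigr)\le\frac{C}{r}\int_{B'}\int_{\Sph^{n-1}}\#\bigl(\{u=0\}\cap\ell_{x,\omega}\cap B(x,r)\bigr)\,d\omega\,dx\le C\int_{B'}\log\frac{C\sup_{2B}|u|}{|u(x)|}\,dx .
$$
This requires no lower bound on $|u(x)|$.

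It remains to prove $\int_{B'}\log\frac{C\sup_{2B}|u|}{|u|}\,dx\le C(\NN(B)+1)$, and here a single pencil suffices. At a maximum point $y$ of $|u|$ on $\overline B$ one has $|u(y)|=2^{-\NN(B)}\sup_{2B}|u|$. So on each line through $y$, the complexified restriction $f$ has at most $C(\NN(B)+1)$ zeros in a fixed-width complex neighbourhood of the chord. The Riesz decomposition of $\log|f|$ (logarithmic potentials of those zeros plus a harmonic part controlled by Harnack) then bounds $\int_{\ell\cap B'}\log\frac{C\sup_{2B}|u|}{|u|}$ by $C(\NN(B)+1)$. Integrating in polar coordinates about $y$ gives the volume integral.

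\textbf{The additive constant.} To drop it you need $\NN(B)\ge c_n$ whenever $u$ vanishes in $B$. You assert this without proof; it follows from Harnack's inequality applied to $\sup_{2B}|u|\mp u$ on $\overline B$.
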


Our first main result gives an affirmative answer to Conjecture \ref{conj}. 
\begin{theorem}\label{thm:main}
Let $B=B(p, 1)\subset\R^n$ be a unit ball and $n\ge 3$. There is a positive constant $C_n$, depending only on $n$, such that
every non-zero harmonic function $u$ in the open ball $4B$ satisfies
\begin{equation}\label{eq:main-additive-1}
 \NN\bigl(\frac12 B\bigr)\leq C_n\bigl(1+\HH^{n-1}\bigl(\{u=0\}\cap 2B\bigr)\bigr).
\end{equation}
If, in addition, $u(p)=0$, then
\begin{equation}\label{eq:main-linear-2}
 \HH^{n-1}\bigl(\{u=0\}\cap 2B\bigr)\geq c_n\NN\bigl(\frac12 B\bigr),
\end{equation}
for a positive constant $c_n$ depending only on $n$.
\end{theorem}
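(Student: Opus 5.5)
We will prove the additive estimate \eqref{eq:main-additive-1} first, and then deduce \eqref{eq:main-linear-2} from it together with Logunov's uniform lower bound. For the deduction: if $u(p)=0$, then Logunov's local theorem \cite[Theorem 1.2]{Logunov2018} (recovered from Theorem \ref{thm:LPS}) gives $\HH^{n-1}(\{u=0\}\cap 2B)\ge c_0>0$. Hence $1+\HH^{n-1}(\{u=0\}\cap 2B)\le (1+c_0^{-1})\HH^{n-1}(\{u=0\}\cap 2B)$, and \eqref{eq:main-linear-2} follows from \eqref{eq:main-additive-1} with $c_n=c_0/(C_n(1+c_0))$. So everything reduces to showing that a large doubling index $\NN=\NN(\frac12B)$ forces nodal volume at least $c\NN-C$ in $2B$.

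For \eqref{eq:main-additive-1}, the plan is a counting argument in which disjoint small cubes each carry a definite amount of zero set.

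\emph{Step 1: reduce to a local lower bound.} By standard almost-monotonicity of the doubling index (frequency comparison), a doubling index $\NN$ on $\frac12B$ propagates to a doubling index $\gtrsim \NN$ on a family of balls in $2B$ at all scales between $\NN^{-1}$ and $1$, after discarding a bounded loss. The key local claim is: if $Q$ is a cube of side $r$ in which the doubling index is about $N\gg1$, then $u$ changes sign in $Q$ on the scale $r/N$. Concretely, there are about $N^{n}$ subcubes of side $r/N$ such that a positive proportion of them contain zeros at their centers (up to doubling). One then applies Logunov's uniform bound, rescaled, in each such subcube. This gives nodal volume $\gtrsim (r/N)^{n-1}$ per subcube.

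\emph{Step 2: count.} Counting naively yields $N^n\cdot(r/N)^{n-1}=rN$ on $Q$, and with $r\sim1$, $N\sim\NN$ this gives exactly the linear bound. The issue is producing $\gtrsim N^{n}$ "zero-carrying" wavelength cubes. I would obtain this from two ingredients:
\begin{itemize}[leftmargin=*]
\item a Remez-type/propagation of smallness inequality, used to show that $u$ cannot keep one sign on most wavelength cubes;
\item a quantitative Harnack argument: a cube of side $r/N$ on which $u$ has constant sign has controlled doubling index. By a Bernstein-type comparison with polynomials of degree $N$, a positive harmonic function cannot vary by a factor $2^{N}$ across $Q$ unless sign changes occur on a positive-density set of wavelength cubes.
\end{itemize}

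\emph{Main obstacle.} The expected obstacle is that the doubling index need not be uniformly distributed. Logunov's simplex/hyperplane lemmas only guarantee that most subcubes have a \emph{smaller} doubling index, which is exactly why Theorem \ref{thm:LPS} loses $\NN^{\eps}$. To avoid the loss, I would try an inductive bound of the form $F(N)\ge cN$, where $F(N)$ is the infimal nodal volume in a unit cube with doubling index $N$. Subdividing into $K^n$ subcubes with indices $N_j$, the inductive step needs
$$\sum_j K^{1-n}F(N_j)\ \ge\ F(N),$$
and this requires $\sum_j N_j\ge K^{n-1}\cdot N$ in an averaged sense. The input for that is a new "additivity" lemma: the doubling indices of $u$ along a chain of $K$ adjacent subcubes crossing $Q$ add up to at least $cN$, since the growth $2^{N}$ across $Q$ must be accumulated along every such chain. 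Summing over the $K^{n-1}$ parallel chains gives the required inequality. Proving this additivity with a uniform constant, in particular handling cubes where $u$ is tiny and Harnack chains break, is the step I expect to be hardest. I would first attempt it via the three-balls inequality applied along chains, with the additive constant $C_n$ in \eqref{eq:main-additive-1} absorbing the subcubes of bounded index.
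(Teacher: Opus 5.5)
Your reduction of \eqref{eq:main-linear-2} to \eqref{eq:main-additive-1} through Logunov's uniform bound is correct, and it coincides with the paper's final step. The gap is in \eqref{eq:main-additive-1}. Both claims your count rests on are false for functions without stable growth, and the paper's own example in Proposition~\ref{sv:prop:sharpness} refutes them.

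Take $u_R(x)=H_n(R+2Rx_1,2Rx')-\EE(R)$. Then $u_R(0)=0$ and $\log(1+\NN_R)=3R+O(1)$. By \eqref{sv:eq:sharpness-average-growth} and \eqref{sv:eq:sharpness-cylinder-bound}, $|u_R+\EE(R)|=O(R^{n-2})$ on any fixed ball outside the tube $\{|x'|<R^{-1}\}$, while $\EE(R)=e^{e^R}+O(1)$. This has three consequences.
\begin{enumerate}
\item Every zero of $u_R$ in $B_2$ lies in this tube. So only a fraction $\lesssim R^{1-n}\asymp(\log\NN_R)^{1-n}$ of the cubes of side $\NN_R^{-1}$ meet the zero set, and your Step~1 density claim fails.
\item Your Step~2 count credits about $\NN_R^{1-n}$ to each zero-carrying cube, so it produces at most $C_n\NN_R(\log\NN_R)^{1-n}$, whereas \eqref{eq:main-linear-2} asserts $\gtrsim\NN_R$. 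The missing nodal volume comes from oscillation on scales far below $\NN_R^{-1}$ inside the tube.
\item Fix $K$ and let $R\to\infty$. All but $O_n(K)$ of your $K^{n-1}$ parallel chains have doubled subcubes disjoint from the tube, where $u_R$ equals $-\EE(R)$ up to a relative error $O(R^{n-2}e^{-e^R})$. Their doubling indices are therefore $o(1)$. The growth $2^{\NN_R}$ is not accumulated along every chain, so your additivity lemma is false, and $\sum_jN_j\gtrsim K^{n-1}N$ cannot be obtained from chains.
\end{enumerate}
This concentration of growth in thin regions is the central difficulty. Your proposal names it but supplies no mechanism to overcome it.

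What is missing is a quantity that registers zeros at all scales simultaneously, together with a summation over frequency scales that does not lose their number. The paper uses the fractional moment $\int_{B_{8/5}}|\nabla\log|u||^{1/2}\,dx$.
\begin{itemize}
\item Upper bound: Proposition~\ref{prop:gradient-upper} bounds it by $C_n(1+\HH^{n-1}(\{u=0\}\cap B_2))^{1/2}$. It uses Logunov's bound to get a packing estimate for $\{x:\dist(x,\{u=0\}\cap B_2)<t\}$ valid at every small $t$, the gradient estimate for positive harmonic functions, and the layer-cake formula.
\item Lower bound under $S$-stable growth: in the proof of Proposition~\ref{prop:stable} it is bounded below by $c_{n,S}\NN^{1/2}$. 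The ingredients are the holomorphic extension of the radial restrictions $r\mapsto u(r\xi)$ with growth $e^{C\NN}$; a Remez inequality on great circles, giving a set of directions of measure $\ge\delta$ on which $|u(\frac32\xi)|\ge e^{-\eps\NN}\sup_{B_{3/2}}|u|$; and the compactness Lemma~\ref{lem:compactness} via Corollary~\ref{cor:separation}.
\end{itemize}
Under stable growth your Step~1 density claim is in fact true. The paper derives it from these moment bounds in the proof of Proposition~\ref{sv:prop:stable-sign}, not from Harnack or Bernstein comparisons.

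Stable growth is then removed not by an induction on cubes, but by applying the stable-ball Lemma~\ref{lem:stable-ball} in frequency layers $v_j=10^jv_0$. The ball in layer $j$ has radius $w_j$ and inner doubling index $\ge c_nv_jw_j$, so it carries nodal volume $\gtrsim v_jw_j^n$. The weighted H\"older estimate \eqref{eq:weighted-sum-lower} then gives $\sum_j10^jw_j^n\ge c_n$, independently of the number of layers. That frequency weight, supplied by the stable-ball construction, is what your additivity lemma was meant to provide.
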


In dimension two, the analogous estimate follows from classical bounds relating the growth of harmonic functions to their sign changes on circles, going back to Gelfond \cite{Gelfond1934}; see
\cite[formula (1.7)]{NazarovPolterovichSodin2005}
and \cite[Lemma 3.2.1]{RoyFortin2015}.

\subsection*{Optimality of the linear dependence in Theorem \ref{thm:main}}
The model $\re((x_1+ix_2)^m)$ appears in
\cite[Example~2.7]{LPS}, where it is used to describe the
distribution of doubling indices. Its extension to $\mathbb R^n$,
independent of $x_3,\ldots,x_n$, also shows that the linear
dependence in our estimate is optimal. For each integer $m\geq1$,
let
$$
  u_m(x)=\re((x_1+ix_2)^m).
$$
Then $u_m$ is harmonic, $u_m(0)=0$, and
$$
  \sup_{B_r}|u_m|=r^m,
  \qquad
  \log_2\frac{\sup_{B_1}|u_m|}
                 {\sup_{ B_{\frac12}}|u_m|}=m.
$$
Indeed, $|u_m(x)|\leq |x|^m$, and the supremum is approached
as $x$ tends to $(r,0,\ldots,0)$ from inside the ball.
Writing $x_1=s\cos\theta$ and $x_2=s\sin\theta$, we have
$u_m(x)=s^m\cos(m\theta)$. Hence its zero set consists of
$m$ distinct hyperplanes containing
$\{x\in\R^n:x_1=x_2=0\}$.
Their pairwise intersections have zero
$\HH^{n-1}$-measure, so
$$
  \HH^{n-1}\bigl(\{x\in B_2:u_m(x)=0\}\bigr)
  =
  m\,\HH^{n-1}\bigl(\{x\in B_2:x_1=0\}\bigr).
$$
The factor multiplying $m$ is a positive constant depending
only on $n$. Thus the nodal volume grows exactly linearly
with the doubling index, proving the optimality of this
order of growth.

\subsection{The volume of the positivity set} Beyond the size of the nodal set, a natural question is the distribution of signs of a harmonic function. More precisely, if a non-zero harmonic function vanishes at the center of a ball,
how small can the volume of its positivity set be in terms of its
doubling index?
In dimension two, Nazarov, Polterovich and Sodin \cite{NazarovPolterovichSodin2005} established the optimal logarithmic order for the positivity area in terms
of the doubling index.

\begin{theorem}[{\cite[Theorem 2.2]{NazarovPolterovichSodin2005}}]\label{NPS-2D}
Let $u$ be a non-zero harmonic function in the unit disc $\D$ vanishing at the origin. Then
$$
  \HH^2\bigl(\{u>0\}\cap\D\bigr)
  \ge \frac{c}{\log\beta^*(\D,u)},
$$
where
$$
  \beta(\D,u)
  =\log\frac{\sup_{\D}|u|}{\sup_{\frac12\D}|u|},
  \qquad
  \beta^*(\D,u)=\max\{\beta(\D,u),3\},
$$
and $c$ is a positive constant.
\end{theorem}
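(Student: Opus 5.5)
Since Theorem~\ref{NPS-2D} is planar, the plan is to argue with complex analysis and harmonic measure rather than with the higher-dimensional machinery. Write $u=\re f$ with $f$ holomorphic in $\D$, set $M=\sup_{\D}|u|$, $\Omega=\{u>0\}\cap\D$ and $A=\HH^2(\Omega)$. The hypothesis $u(0)=0$ gives two facts. First, $0\in\partial\Omega$. Second, by the mean value property, $u^+$ and $u^-$ have equal means on every circle $|z|=t$.

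\textbf{Step 1: Carleman-type decay of $u^+$.} For $t\in(0,1)$ let $t\,\theta(t)$ be the total length of $\Omega\cap\{|z|=t\}$. The function $u^+$ is subharmonic, vanishes on $\partial\Omega\cap\D$, and is bounded by $M$. The Carleman (Ahlfors distortion) estimate for harmonic measure in $\Omega$ then gives, for $s<1$,
\[
\sup_{|z|=s}u^+\le C M\exp\Bigl(-\pi\int_s^1\frac{dt}{t\,\theta(t)}\Bigr).
\]
Since $\int_s^1\theta(t)\,t\,dt\le A$, the Cauchy--Schwarz (Jensen) inequality bounds the integral below by roughly $(1-s)^2/A$ on a fixed annulus.

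\textbf{Step 2: lower bound for the mass on the circle.} Since the means of $u^+$ and $u^-$ on $|z|=s$ are equal, the $L^1$ mean of $|u|$ on $|z|=s$ is at most $2\sup_{|z|=s}u^+$. The Poisson representation bounds $\sup_{|z|\le s/2}|u|$ by this $L^1$ mean. On the other side, the doubling index bounds $\sup_{|z|\le s/2}|u|$ from below by $M\,e^{-C\beta\log(2/s)}$. With $s=1/2$, Steps 1--2 already give $e^{-C\beta}\lesssim e^{-c/A}$, that is $A\gtrsim 1/\beta^*$.

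\textbf{Step 3: the logarithmic improvement (main obstacle).} Step 2 loses because it compares $u$ across an annulus of fixed width, where the doubling index enters the exponent linearly. The fix is to spend the Carleman estimate over many scales, each contributing a cheap amount of decay. Precisely, one wants about $\log\beta^*$ dyadic annuli on which $u$ has doubling index $O(1)$ relative to its own size, rather than a single annulus with doubling index $\beta$.

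To locate such annuli, I would count zeros: by Jensen's formula, the number of zeros of $f-f(0)$ (and of $f'$) in $\frac12\D$ is $O(\beta^*)$. Hence there is a disc $D(z_0,r)\subset\frac12\D$ meeting $\{u=0\}$, with $r\gtrsim 1/\beta^*$ and with bounded doubling index at all scales between $r$ and $1$ after rescaling. Such a disc is obtained by choosing a zero of $u$ near $0$ away from the critical points of $f$, or by a pigeonhole argument over the $\log\beta^*$ dyadic scales.

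On each of the roughly $\log(1/r)\approx\log\beta^*$ annuli, Steps 1--2 force the relative area of $\Omega$ to be at least a constant. Alternatively, summing the Carleman integral over all scales, Cauchy--Schwarz on $\int dt/(t\theta)$ gives
\[
\log\beta^*\cdot \frac{\log\beta^*}{\int\theta\,dt/t}\lesssim \log\beta^* .
\]
Weighting the area correctly then yields $A\gtrsim 1/\log\beta^*$.

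The delicate point is to make the bounded-doubling-at-every-scale selection rigorous. I would first try a monotonicity argument for the frequency of $f$ centered at a suitably chosen point of $\{u=0\}\cap\frac12\D$. Failing that, I would use a direct Jensen-formula count of zeros of $f'$ to exclude scales where the local degree jumps.
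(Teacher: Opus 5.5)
The paper does not prove this statement; it is quoted from \cite{NazarovPolterovichSodin2005}. The relevant comparison is therefore with the paper's proof of its higher-dimensional analogue, Theorem~\ref{sv:thm:main}.

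Your Steps~1--2 are essentially fine: Carleman applies because $u$ has zero mean, and hence changes sign, on every circle $|z|=t$. But they only give $\HH^2(\{u>0\}\cap\D)\gtrsim 1/\beta^*$.

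The gap is Step~3, and it is not a matter of rigor. The disc you want to select does not exist in exactly the cases that matter.
\begin{itemize}
\item Suppose some zero $z_0\in\frac12\D$ had doubling index $O(1)$ at all scales from $r$ up to a scale comparable to $1$. Then Steps~1--2 (or Lemma~\ref{sv:lem:balance}) applied to the single disc $D(z_0,\frac14)$ would already give positivity area $\geq c$, with no logarithm. Your assertion that each of the $\log\beta^*$ annuli carries a constant relative area says the same thing.
\item This contradicts the sharpness examples of \cite{NazarovPolterovichSodin2005}, of which Proposition~\ref{sv:prop:sharpness} is the higher-dimensional version. Their positivity area is $\lesssim 1/\log\beta$. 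In the normalization $\re\EE(R+2Rz)-\EE(R)$, with $\log\beta\asymp R$, the doubling index around a zero is bounded only at scales $\lesssim e^{-R}/R$.
\item Even for $u=\re(z^m)$, every disc centred in $\frac12\D$ has doubling index $\asymp m$ at some scale comparable to $1$.
\end{itemize}
A Jensen count of zeros of $f'$ can at best give bounded doubling at scales $\lesssim 1/\beta$. At larger scales the doubling is governed by the frequency, which may be huge throughout. Your alternative Cauchy--Schwarz computation rests on the same premise. In any case, a lower bound for $\int\theta\,dt/t$ gives no lower bound for the area $\int\theta\,t\,dt$, since the mass can sit at tiny radii.

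The missing idea is that the logarithm counts \emph{frequency layers}, not dyadic spatial scales of bounded doubling. The argument for Theorem~\ref{sv:thm:main}, whose final H\"older summation with $n=2$ gives exactly the power $-1$, runs as follows.
\begin{itemize}
\item The radial range is cut into $m\lesssim\log\beta^*$ intervals of total length $\gtrsim 1$. On each, the Almgren frequency stays within a factor $10$ of $10^j v_0$: large, but stable.
\item In each layer of width $w_j$ one needs a genuine lemma that stable growth forces a fixed proportion of positivity in a ball of radius $\asymp w_j$. This is Proposition~\ref{sv:prop:stable-sign}. There, bounded doubling is obtained only at the tiny scale $\asymp 1/(\text{local index})$, but around a set of zeros of positive measure. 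This uses fractional moments of $|\nabla\log|u||$ and the Donnelly--Fefferman small cube estimate, followed by a packing argument.
\item Then $\sum_j w_j^2\geq m^{-1}\bigl(\sum_j w_j\bigr)^2\gtrsim 1/\log\beta^*$.
\end{itemize}
The NPS example is extremal for precisely this count: about $R\asymp\log\beta$ layers of width $\asymp 1/R$, each contributing area $\asymp R^{-2}$. Steps~1--2 cannot replace the stable-growth lemma either, since in a stable ball of index $\NN$ they still give only area $\gtrsim 1/\NN$.
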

Thus Theorem \ref{NPS-2D} gives a reciprocal-logarithmic lower bound for the area of the positivity set in terms of the doubling index. Nazarov, Polterovich, and Sodin also constructed examples showing
that the logarithmic order of this lower bound is optimal; see \cite[Theorem 6.1]{NazarovPolterovichSodin2005}. In \cite[Section 7.4]{NazarovPolterovichSodin2005}, the same authors explicitly asked for the optimal lower bound, in terms of the doubling index, for the volume of the positivity set of a harmonic function in higher dimensions.
For $n\ge3$ and a non-zero harmonic function $u$ in the unit ball $B\subset\R^n$ vanishing at its center, they established
\begin{equation}\label{powerlow}
      \HH^n\bigl(\{u>0\}\cap B\bigr)
  \ge c_n\beta(B,u)^{1-n},
\end{equation}
where $\beta(B,u)$ is defined as above, and $c_n$ is a positive constant depending only on $n$. They noted, however, that this estimate should be far from sharp.

Our second main result is the following higher-dimensional logarithmic estimate. 
\begin{theorem}\label{thm:main-posi}
 Let $B=B(p, 1)\subset\R^n$ be a unit ball and $n\ge 3$. There exists a positive constant $c_n$, depending only on $n$, such that every non-zero harmonic function $u: B\to \R$ with $u(p)=0$ satisfies
\begin{equation}\label{eq:main-posi}
\HH^{n}\bigl(\{u>0\}\cap \frac{1}{2}B\bigr)
\ge c_n\bigl(\log\bigl(1+\NN(\frac12 B)\bigr)\bigr)^{1-n}.
\end{equation}
\end{theorem}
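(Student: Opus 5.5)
We reduce Theorem \ref{thm:main-posi} to a lower bound for the positivity set on a small ball of radius comparable to $1/\log(1+\NN)$. On such a ball $u$ has bounded doubling index and still vanishes somewhere near the center. This is the higher-dimensional analogue of the Nazarov--Polterovich--Sodin strategy.

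Write $N=\NN(\frac12 B)$ and assume $N$ is large, since otherwise the bound follows from the power bound \eqref{powerlow} or from compactness. Put $r=c_0/\log(2+N)$.

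\textbf{Step 1.} We look for a point $x\in\frac14 B$ with $u(x)=0$ such that the ball $B(x,r)$ has doubling index $\NN(B(x,r))\le C$, with $C=C(n)$. Then Harnack-type compactness finishes the argument. For harmonic $v$ on $B(x,2r)$ with $v(x)=0$ and bounded doubling index, the normalized function $v(x+r\cdot)/\sup_{B(x,r)}|v|$ lies in a compact family of nonzero harmonic functions vanishing at $0$. Every such function has a positivity set in $B(0,1)$ of measure at least $c(n,C)>0$. Hence $\HH^n(\{u>0\}\cap B(x,r))\ge c\,r^n$.

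\textbf{Step 2.} Step 1 alone gives only $r^n=(\log N)^{-n}$, so we need to gain one factor of $\log N$. To get it we produce about $1/r$ disjoint such balls along a segment of length comparable to $1$. We choose a direction and consider the family of parallel slabs, or concentric spheres of radii $t\in[\frac18,\frac14]$, spaced by $r$. In each slab we want a zero of $u$ carrying a ball of bounded doubling index. Summing over roughly $1/r$ disjoint slabs gives
$$
\HH^n\bigl(\{u>0\}\cap\tfrac12 B\bigr)\ge c\,r^{n-1}=c\,(\log(1+N))^{1-n}.
$$
Zeros exist in every sphere $\partial B(p,t)$: since $u(p)=0$, the mean value property forces $u$ to change sign on each such sphere.

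\textbf{Step 3 (main obstacle).} We must show that, near these sign changes, most spheres contain a point where the doubling index at scale $r$ is bounded. For this we use the distribution of doubling indices, which is the machinery of \cite{LPS} behind Theorem \ref{thm:LPS}, together with the paper's own refinements leading to Theorem \ref{thm:main}.

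The key input is a counting bound. Cover $\frac12 B$ by cubes of size $r$. The number of cubes on which the doubling index exceeds a large constant $A$ should be at most $C\,r^{1-n}\,N\,2^{-cA}$, or more robustly $o(r^{-n})$ with a logarithmic gain per scale. This follows from the iterated (Logunov-type) lemma that doubling indices drop by a factor when passing from a cube to most of its subcubes. Starting from index $N$ and iterating over $\log_2(1/r)\approx\log\log N$ dyadic generations would reduce the index to bounded size on all but a small fraction of subcubes.

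The choice $r\sim1/\log N$ is calibrated so that the drop $N\mapsto N/2^{k}$ reaches $O(1)$ after $k\approx\log N$ generations. This suggests that the honest scale is $r=2^{-c\log N}$, which is too small. I therefore expect the real difficulty to be the following: proving that along a generic sphere the index decays exponentially fast in the scale, rather than by a factor $2$ per generation. Near a sign change of $u$, the function $\log|u|$ is controlled by a harmonic measure or Carleman estimate, analogous to the two-dimensional conformal argument of \cite{NazarovPolterovichSodin2005}.

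I would first try a three-ball inequality on the sphere $\partial B(p,t)$. The averaged frequency $\int_{1/8}^{1/4}\beta(t)\,dt\lesssim N$ implies, for most $t$, a bound of the form $\NN(B(x,s))\lesssim sN$ at scales $s\gtrsim1/N$. I would then combine this with a logarithmic-capacity estimate to pass down to $s\sim1/\log N$.

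Finally, a pigeonhole argument selects the good slabs. The selected balls may overlap, and this is handled by a Vitali covering.
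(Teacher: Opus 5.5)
\textbf{There is a genuine gap: the central claim of Step 1 is false.} Your argument needs a zero $x\in\frac14B$ (or a point near one) with $\NN(B(x,r))\le C$ at the scale $r=c_0/\log(2+N)$, and Step 3 is meant to supply such a point in most slabs. Such a point need not exist.

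Take $p=0$ and $u=\re\bigl((x_1+ix_2)^N\bigr)$, so that $\NN=N$. Write $y_1+iy_2=se^{i\theta}$. Then $|u(y)|\le s^N$, with equality when $\cos(N\theta)=\pm1$, and every angle lies within $\frac{\pi}{2N}$ of such an angle. Let $x\in\frac12B$ and $s_0=|(x_1,x_2)|$.
\begin{itemize}
\item On the small ball, $\sup_{B(x,r)}|u|\le(s_0+r)^N$.
\item For large $N$, the ball $B(x,2r)$ contains a point with $s=s_0+1.9r$ and $\cos(N\theta)=\pm1$. To see this, rotate by at most $\frac{\pi}{2N}$ and move outward by $1.9r$; the displacement is less than $2r$ because $r\asymp1/\log N\gg1/N$.
\item Hence $\NN(B(x,r))\ge N\log_2\bigl(1+\tfrac{0.9r}{s_0+r}\bigr)\ge cNr\gtrsim N/\log N$.
\end{itemize}
This holds for \emph{every} centre in $\frac12B$, zeros included, and for every fixed $c_0$. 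Three consequences follow.
\begin{itemize}
\item Not a single admissible ball exists.
\item The ``robust'' count $o(r^{-n})$ of bad cubes in Step 3 fails, since every cube of size $r$ is bad.
\item The bound $\NN(B(x,s))\lesssim sN$ you want to start from in Step 3 is attained. At $s\asymp1/\log N$ it gives $N/\log N$, not $O(1)$. No three-ball, Carleman or capacity argument can improve an estimate that an explicit example attains.
\end{itemize}
In this example the theorem itself is trivial, because $\{u>0\}$ fills half of $\frac12B$. So positivity is simply not produced by balls of bounded doubling index at scale $1/\log\NN$. The ``honest scale'' you identified in Step 3, a negative power of $\NN$, is the true one.

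\textbf{What is missing} is a mechanism that works at the fine scale $L_0/T$ set by the local index $T$, and recovers the volume from $\gtrsim T^n$ such balls. In the paper this is Proposition \ref{sv:prop:stable-sign}, on a ball of $S$-stable growth with index $T$:
\begin{itemize}
\item For the logarithmic gradient $g$, the moment bounds $\int_{B_{\frac85}}g^{1/2}\gtrsim T^{1/2}$ (Corollary \ref{cor:separation} with Lemma \ref{lem:directions}) and $\int_{B_{\frac85}}g^{3/4}\lesssim T^{3/4}$ (Proposition \ref{prop:gradient-upper} with Theorem \ref{ndupper}) give a set of volume $\gtrsim1$ lying within $L_0/T$ of the zero set.
\item The Donnelly--Fefferman small cube estimate (Lemma \ref{sv:lem:fixed-scale}) gives bounded $L^2$-doubling at scale $L_0/T$ outside a set of small measure.
\item Lemma \ref{sv:lem:balance} and a Vitali selection then give positivity volume $\gtrsim1$.
\end{itemize}
The logarithm enters only afterwards, through the frequency layers. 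The frequency $\beta$ on $[\frac{11}{10},\frac{19}{10}]$ spans at most $C\log(1+\NN)$ decades. Each retained layer of width $w_j$ carries a stable ball of radius $w_j$ (Lemma \ref{lem:stable-ball}), which contributes $\gtrsim w_j^n$. H\"older's inequality with $\sum_jw_j\ge\frac1{100}$ then gives the factor $m^{1-n}$. Your count of about $\log\NN$ pieces of size $1/\log\NN$ matches this final arithmetic. However, the pieces are frequency layers in the radial variable, not balls of bounded doubling index.
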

Applying the same result to $-u$ gives the identical lower bound
for the volume of the negative set. Since
$\beta(B,u)=(\log2)\NN(\frac12 B)$, the preceding theorem improves the power type lower bound \eqref{powerlow} to a logarithmic lower bound in every dimension $n\ge3$.

\subsection*{Optimality of the logarithmic order in Theorem \ref{thm:main-posi}}
By adapting the planar construction of Nazarov, Polterovich and Sodin \cite[Section 6.1]{NazarovPolterovichSodin2005}
to higher dimensions, we obtain a family of harmonic functions showing that the order $\bigl(\log\bigl(1+\NN(\frac12 B)\bigr)\bigr)^{1-n}$ in Theorem \ref{thm:main-posi} is optimal up to dimensional constants. We start from the entire function used in their construction
and combine spherical averaging with a contour deformation
to obtain the required higher-dimensional examples. More precisely,
Proposition \ref{sv:prop:sharpness} constructs a family of non-zero harmonic functions $u_R$ in $\R^n$, with $u_R(0)=0$, such that
$$
  \NN_R
  :=\log_2\frac{\sup_{B_1}|u_R|}{\sup_{B_{\frac12}}|u_R|}
  \longrightarrow\infty\qquad \text{ as }R\to \infty,
$$
and
$$
  \HH^n\bigl(\{u_R>0\}\cap B_{\frac12}\bigr)
  \le C_n\bigl(\log(1+\NN_R)\bigr)^{1-n},
$$
where $C_n$ is a positive constant depending only on $n$. These examples show that the logarithmic order of the lower bound in Theorem \ref{thm:main-posi} is optimal up to dimensional constants; see Remark \ref{sharprem}.

\subsection*{Outline of the proofs.} We use the notion of stable growth introduced by Logunov, Priya and Sartori \cite[Definition 6.1]{LPS}. We first establish the estimate under the stable growth assumption. The main new ingredient is a mechanism relating the doubling index and nodal volume through a fractional moment of the logarithmic gradient. Using Logunov's uniform local lower bound for nodal volume \cite[Theorem 1.2]{Logunov2018}, we bound this moment above in terms of the nodal volume. Conversely, our compactness argument for
normalized logarithms of holomorphic radial restrictions,
combined with a classical Remez type inequality for trigonometric
polynomials \cite[Theorem 2]{Erdelyi1992Remez}, gives a lower bound in terms of the doubling index. Comparing these bounds controls the doubling index by one
plus the nodal volume on stable growth balls. Then, we remove the stable growth assumption using the stable ball construction of \cite[Lemma 6.5]{LPS} and following the frequency layer decomposition and weighted
H\"older summation in \cite[Section 7, proof of Proposition 4.5]{LPS}. Finally, when $u(0)=0$, Logunov's uniform local lower bound absorbs the additive constant and yields the stated linear lower bound in Theorem \ref{thm:main}.

For Theorem \ref{thm:main-posi}, the additional ingredient we require is the
analytic small cube estimate of Donnelly and Fefferman \cite[Proposition 5.11]{DonnellyFefferman1988}, as formulated
in \cite[Proposition C.1]{LPS}. Together with an overlapping grid argument, it provides bounded local doubling outside a small exceptional set. Combining this with the fractional moment estimates, we select small balls centered at zeros and use the mean value property to show that the positivity set occupies a uniformly positive proportion of each ball. A disjoint ball selection then yields the corresponding lower bound on stable growth balls with sufficiently large
doubling index. For the general case, we reuse the frequency layer construction above, leaving at most $C_n\log\bigl(1+\NN(\frac12 B)\bigr)$ relevant layers. Unlike the nodal volume estimate, the local volume contributions carry no frequency weight, so the unweighted H\"older summation introduces a factor $m^{1-n}$, where $m$ is an upper bound for the number of retained layers. This yields the required logarithmic lower bound in Theorem \ref{thm:main-posi}.

As a further consequence of the ideas developed in the proof, we obtain an alternative proof of Nadirashvili's conjecture that avoids frequency layer decomposition and multiscale induction; see Appendix \ref{sec:appendix}. Our proof combines fractional moment estimates for the logarithmic gradient with an almost minimal nodal density argument. The final extremal step is similar in spirit to that of Logunov \cite[Section 7]{Logunov2018}, with the fractional moment estimates replacing the multiscale combinatorial input. We use the stable ball construction of Logunov, Priya and Sartori \cite[Lemma 6.5]{LPS}, but do not invoke the nodal volume lower bounds established in \cite{Logunov2018,LPS}.

\subsection*{Organization of the paper}This paper is organized as follows. In Section \ref{sec:compactness}, we establish a compactness principle for normalized logarithms of holomorphic
functions. In Section \ref{sec:gradient}, we derive upper bounds for fractional moments of the logarithmic gradient in terms of the nodal volume. In Section \ref{sec:stable}, we prove the linear lower bound under the stable growth assumption. In Section \ref{sec:main-proof}, we remove this assumption and complete the proof of Theorem \ref{thm:main}. In Section \ref{sec:main-posi-proof}, we establish the lower bound for the volume of the positivity set in Theorem \ref{thm:main-posi}, and show that its logarithmic order is optimal up to dimensional constants. Finally, in Appendix \ref{sec:appendix}, we give an alternative proof of Nadirashvili's conjecture that avoids the multiscale analysis.

\subsection*{Notation}
In the rest of this paper, we write $B_r=B(0, r)\subset\R^n$ for the Euclidean ball centered at the origin with radius $r$. All harmonic functions are real-valued. For a non-zero harmonic function
$u$ in $B_4$, set
\begin{equation*}\label{eq:basic-notation}
 M(r)=\sup_{B_r}|u|,
\end{equation*}
and define
\begin{equation*}\label{eq:N}
 \NN:=\NN(B_{\frac 12})=\log_2\frac{M(1)}{M(\frac12)}.
\end{equation*}
Whenever $u$ is defined in a domain $\Omega$, we write $\{u=0\}\cap \Omega\}=\{x\in \Omega: u(x)=0\}$ and $\{u>0\}\cap \Omega\}=\{x\in \Omega: u(x)>0\}$. The notation $\log$ denotes the natural logarithm; other bases are indicated explicitly. We write
$\D_r=\{z\in\C:|z|<r\}$. Surface measure is denoted by $d\sigma$, and normalized surface
measure on $\Sph^{n-1}$ by
$d\mu=|\Sph^{n-1}|^{-1}{d\sigma}$. The volume of a measurable set $X\subset\R^n$ is denoted by $\VV(X)$, and we write $\omega_n=\VV(B_1)$. $c_{a,b,\cdots},C_{a, b,\cdots}$, $c(a,b,\cdots)$ and $C(a, b,\cdots)$ are constants depending only on $a, b, \cdots$, which may change from line to line. 

\subsection*{Acknowledgements}The author is grateful to Josef Greilhuber, Aleksandr Logunov and Mikhail Sodin for pointing out that our method for proving the nodal volume estimate also yields a lower bound for the volume of the positivity set and an alternative proof of Nadirashvili's conjecture avoiding multiscale analysis. The author also thanks them for their helpful comments.
\subsection*{Disclosure on AI assistance.} The author used AI-assisted tools, principally ChatGPT. The author wrote and verified all theorem statements and proofs, and he takes full responsibility for the contents of the paper.

\section{A compactness lemma for logarithmic derivatives}\label{sec:compactness}
We establish the following compactness lemma using standard tools from complex analysis. We include the proof, with particular attention to the pointwise upper limit estimate needed in Corollary \ref{cor:separation}. Jensen's formula, normal family compactness, and the reflection principle are used in their classical forms; see \cite{Conway}. The positive reflected zero measures and the pointwise upper limit conclusion are part of the argument below. The values of a logarithm at real zeros are understood to be $-\infty$, and its derivative is taken in the ordinary sense away from these zeros. 

\begin{lemma}\label{lem:compactness}
Let $0<s<\frac{1}{4}$, $I=(-s,s)$, and $0<q<1$. Let $f_j$ be holomorphic in $\D_2$ and real-valued on $(-2,2)$. Suppose that there are a constant $C>0$ and a sequence $\{T_j\}_{j\ge 1}$ with $T_j\to\infty$ as $j\to\infty$, such that
\begin{equation}\label{eq:hol-growth}
 \sup_{\D_2}|f_j|\leq e^{CT_j},\qquad |f_j(0)|\geq e^{-CT_j}.
\end{equation}
Assume that
\begin{equation}\label{eq:small-derivative}
 \int_I\left|\frac{f'_j(x)}{T_jf_j(x)}\right|^q\dd x\longrightarrow0 \quad\text{ as } j\to\infty.
\end{equation}
Then, after passing to a subsequence, there is a finite real constant $c$ such
that
\begin{equation}\label{eq:L1-constant}
 v_j:=T_j^{-1}\log|f_j|\longrightarrow c\quad\text{in }L^1(I).
\end{equation}
For the same subsequence and every fixed $x\in I$, one also has
\begin{equation}\label{eq:pointwise-upper}
 \limsup_{j\to\infty}v_j(x)\leq c.
\end{equation}
\end{lemma}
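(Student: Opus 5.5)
Set $u_j:=T_j^{-1}\log|f_j|$ on $\D_2$. These are subharmonic, bounded above by $C$ on $\D_2$, and satisfy $u_j(0)\ge -C$. The plan is to use the standard compactness for such families together with a Riesz/Jensen decomposition that separates the zeros.

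\emph{Step 1: compactness of the $u_j$.} The family $\{u_j\}$ is uniformly bounded above on $\D_2$ and uniformly bounded below at the single point $0$. The classical compactness theorem for subharmonic functions therefore gives, after passing to a subsequence, $u_j\to u$ in $L^1_{\rm loc}(\D_2)$, with $u$ subharmonic and $u\not\equiv-\infty$. The Riesz measures $\mu_j=(2\pi T_j)^{-1}\sum_{f_j(a)=0}\delta_a$ converge weakly to $\mu=\frac{1}{2\pi}\Delta u$. Jensen's formula on $\D_{3/2}$, combined with the two bounds in \eqref{eq:hol-growth}, bounds $\mu_j(\D_{5/4})\le C'$ uniformly.

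\emph{Step 2: decomposition on a small disc.} On $\D_{1}$ I would write
\[
u_j=\int_{\D_1}\log|z-a|\,d\mu_j(a)+h_j,
\]
with $h_j$ harmonic. Here $h_j=T_j^{-1}\log|g_j|$ for a zero-free holomorphic $g_j$, which is real on the real axis because the zeros of $f_j$ come in conjugate pairs. The bounds from Step 1 then give
\[
|h_j'|\le C''\quad\text{on }\D_{1/2},
\]
and the potentials are uniformly $L^1$-controlled. On $I$ this yields
\[
v_j'=\sum_a \frac{1}{T_j(x-a)}+h_j'(x).
\]

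\emph{Step 3: the limit is constant on $I$.} The key claim is that the limit $v$ of $v_j|_I$ is constant. Restrictions of potentials to the real segment converge in $L^1(I)$, since $\log|x-a|$ is integrable in $x$ uniformly in $a$; hence $v_j\to v:=u|_I$ in $L^1(I)$. The hypothesis \eqref{eq:small-derivative} says $v_j'\to0$ in $L^q$ with $q<1$, which alone does not control distributions, because the poles at real zeros carry jumps of $-\infty$ type. I would instead argue as follows. The distributional derivative of $v_j$ on $I$ equals its classical derivative: each singularity $\log|x-a|$ is locally integrable, and its distributional derivative is the principal value $\mathrm{p.v.}\,1/(x-a)$, with no delta part. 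So for a test function $\phi$,
\[
\int v_j\phi'=-\,\mathrm{p.v.}\!\int v_j'\phi .
\]
To show this tends to $0$, I would use Chebyshev: the set where $|v_j'|>\eta$ has measure at most $\eta^{-q}o(1)$. Away from this set the integrand is small. On it, I would integrate $v_j$ against $\phi'$ over the good intervals and control the boundary terms using convergence of $v_j$. An alternative, cleaner route is the $L^1$ limit: $v_j$ is monotone-Lipschitz-small on most of $I$, i.e.\ $\int_x^y v_j'$ is small except across bad sets. Across a short bad set one uses that $v_j$ is nearly the smooth $h_j$ plus potentials of small mass near that point. This step, showing the limit $v$ has zero derivative, is the main obstacle. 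I expect to handle it by proving that $\mu$ has no mass on $I$ (small mass near real zeros would force $\int|v_j'|^q$ to stay large, since $|1/(T_j(x-a))|^q$ integrates to at least $\sim T_j^{-q}\cdot$(number of zeros)$\,\cdot\,$width) and hence that $u$ is real-analytic on $I$. Then $v_j'\to u'$ locally uniformly off shrinking neighbourhoods, so $u'=0$ on $I$ and $u\equiv c$, with $c$ finite because $u\not\equiv-\infty$.

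\emph{Step 4: pointwise upper bound.} For \eqref{eq:pointwise-upper}, I would use the Hartogs-type lemma: subharmonic $u_j\to u$ in $L^1_{\rm loc}$ with a uniform upper bound satisfy $\limsup u_j(z)\le u(z)$ pointwise, via sub-mean-value on small discs plus upper semicontinuity. With $u$ continuous (harmonic) near $I$ and equal to $c$ on $I$, this gives $\limsup v_j(x)\le c$.
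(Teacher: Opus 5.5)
\textbf{There is a genuine gap.} Your Steps 1, 2 and the Hartogs-type inequality in Step 4 are fine. (Minor slip: with $\mu_j=(2\pi T_j)^{-1}\sum\delta_a$, the representation in Step 2 needs $2\pi\mu_j$.) The whole content of the lemma, however, sits in Step 3. There your only concrete plan is to show that the limiting Riesz measure $\mu$ does not charge $I$, so that $u$ is harmonic across $I$. That claim is false.

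Here is a counterexample satisfying all the hypotheses. Take $T_j=j$ and let $\nu$ be the arcsine measure $d\nu=\pi^{-1}(1-t^2)^{-1/2}\,dt$ on $[-1,1]$. Its potential $U_\nu(z)=\int\log|z-t|\,d\nu(t)=\log|z+\sqrt{z^2-1}|-\log 2$ is constant on $[-1,1]$. Let $t_1,\dots,t_j$ be the $\frac{k-1/2}{j}$-quantiles of $\nu$, set $\epsilon_j=j^{-1/2}$, and define $f_j(z)=\prod_k\bigl((z-t_k)^2+\epsilon_j^2\bigr)$. These functions are real on $\R$ and satisfy \eqref{eq:hol-growth}. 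The kernel $t\mapsto 2(x-t)/((x-t)^2+\epsilon^2)$ has total variation $4/\epsilon$, and $\partial_xU_\nu(x+i\epsilon)=\re\,(z^2-1)^{-1/2}=O(\epsilon)$ for $|x|\le\frac14$. Hence
\[
v_j'(x)=\frac1j\sum_{k=1}^{j}\frac{2(x-t_k)}{(x-t_k)^2+\epsilon_j^2}=2\,\partial_xU_\nu(x+i\epsilon_j)+O\bigl((j\epsilon_j)^{-1}\bigr)=O\bigl(j^{-1/2}\bigr)
\]
uniformly on $I$, so \eqref{eq:small-derivative} holds. Yet $\frac1{2\pi}\Delta u_j\rightharpoonup2\nu$, which charges $I$, and $u=2U_\nu$ has a corner across $I$: $u(x+iy)=-2\log2+2|y|(1-x^2)^{-1/2}+O(y^2)$.

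Your heuristic only concerns real zeros of $f_j$. Conjugate pairs at height $\gg T_j^{-1}$ are invisible to $\int_I|v_j'|^q$ and collapse onto $I$ in the limit. The limit is constant on $I$ because the tangential derivative of the potential cancels there, not because $\mu$ avoids $I$.

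The same reliance on harmonicity undermines Step 4. Hartogs gives $\limsup_jv_j(x)\le u(x)$, but you still need $u(x)=c$ at every $x\in I$, and equality a.e.\ on $I$ does not give that. Your Chebyshev alternative does not close the gap either: with $q<1$ there is no Poincar\'e-type control, so a steep rise of $v_j$ on a set of small measure is invisible to $\int_I|v_j'|^q$. Excluding it requires the holomorphic structure.

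\textbf{The missing idea} is to reflect each non-real conjugate pair to a single point of mass $2/T_j$ in the closed lower half-plane. Then on $I$ you have $v_j=\int_K\log|x-a|\,d\mu_j(a)+\re H_j$, with $\mu_j$ supported in $K=\{|a|\le\frac54,\ \im a\le0\}$.
\begin{itemize}
\item The function $F_j=iC_0-H_j'-\int_K\frac{d\mu_j(a)}{z-a}$ satisfies $\im F_j\ge1$ in the upper half-disc and $\re F_j=-v_j'$ on $I$.
\item Its Cayley transforms $w_j=(F_j-i)/(F_j+i)$ are therefore bounded by $1$, with $\|\im w_j\|_{L^1(I)}\le C_q\int_I|v_j'|^q\to0$.
\item A harmonic-measure estimate shows that the limit $w$ has zero imaginary trace on $I$. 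Schwarz reflection then extends $F$ across $I$ with $\re F=0$, so the limiting potential $U$ has a constant trace $c$ from above.
\item Because the limit measure lives in $\{\im a\le0\}$, $\log|x+iy-a|$ decreases to $\log|x-a|$ as $y\downarrow0$. Monotone convergence then gives $U(x)=c$ at every $x\in I$.
\end{itemize}
This works whether or not $\mu$ charges $I$. After it, your Hartogs step (or the truncated-kernel argument) yields \eqref{eq:pointwise-upper}.
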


Before the lengthy proof, we sketch the main steps here. The proof begins with Jensen's formula and a Blaschke factorization, which express $v_j=T_j^{-1}\log|f_j|$ as the sum of the real part of a uniformly controlled holomorphic function and the logarithmic potential of a positive measure associated with the zeros. After reflecting conjugate pairs of zeros into the lower half-plane, compactness yields a subsequential limit of $v_j$ in $L^1(I)$. A Cayley transform then converts the $L^q$-smallness of the normalized logarithmic derivatives into the $L^1$-smallness of the boundary imaginary parts of bounded holomorphic functions. Harmonic measure estimates and Schwarz reflection show that the auxiliary function $F$ extends holomorphically across $I$, with
$\re F=0$ on $I$. The relation between $F$ and the gradient of the limiting potential yields a finite constant boundary trace $c$. Monotonicity arising from the support of the measure in the closed lower half-plane identifies this trace with the potential at every
point of $I$, establishing $v_j\to c$ in $L^1(I)$.
Finally, truncation of the logarithmic kernel and weak convergence of the measures give $\limsup_{j\to\infty}v_j(x)\leq c$ for every
fixed $x\in I$, along the same subsequence.

\begin{proof}[Proof of Lemma \ref{lem:compactness}]
We may assume $T_j\geq1$. The four steps retain both the $L^1$ limit and the value information at every fixed real point. All products and sums over zeros in this proof count multiplicity.

{\it Step 1. Factorization of the zeros.}
Choose $R\in(1,\frac{5}{4})$ and $R'\in(\frac32,\frac74)$ such that none of the functions $f_j$ has a zero on either $\partial\D_R$ or $\partial\D_{R'}$. Such choices are possible because only countably many
radii are excluded. Since $f_j(0)\ne 0$, Jensen's formula and \eqref{eq:hol-growth} give
$$
 \sum_{\substack{|\alpha|<R' \text{ with } f_j(\alpha)=0\\ \text{ counted with multiplicity}}}\log\frac{R'}{|\alpha|}
 =\frac1{2\pi}\int_0^{2\pi}\log|f_j(R'e^{it})|\dd t
   -\log|f_j(0)|\leq2CT_j.
$$
If $\zeta$ is a zero of $f_j$ in $\D_R$, then $\frac{R'}{|\zeta|}>\frac{6}{5}$. Consequently, the number $m_j$ of zeros in $\D_R$, counted with multiplicity, satisfies
\begin{equation}\label{eq:zero-count}
 m_j\log(\frac65)\leq2CT_j.
\end{equation}
Set
\begin{equation*}\label{eq:blaschke}
 \BB_j(z)=\prod_{\substack{|\alpha|<R \text{ with } f_j(\alpha)=0\\ \text{ counted with multiplicity}}}
       \frac{R(z-\alpha)}{R^2-\overline\alpha z},
 \qquad g_j(z)=\frac{f_j(z)}{\BB_j(z)}.
\end{equation*}
The function $\BB_j$ is a finite Blaschke product in $\D_R$. The apparent singularities of $g_j$ at the zeros are removable, and $g_j$ has no zero in $\D_R$. Since $|\BB_j|=1$ on $\partial\D_R$ and $|\BB_j(0)|\leq1$, the maximum principle gives
$$
 \sup_{\D_R}|g_j|\leq e^{CT_j},\qquad |g_j(0)|\geq e^{-CT_j}.
$$
Choose a holomorphic logarithm of $g_j$ whose imaginary part at the origin belongs to $[-\pi,\pi]$. The positive harmonic functions
$$
 C+1-\re\bigl(T_j^{-1}\log g_j\bigr)
$$
have uniformly bounded values at the origin. Harnack's inequality bounds them in $\D_{\frac{7}{8}}$. The harmonic gradient estimate and the Cauchy--Riemann equations, together with the normalization at the origin, then give a uniform bound for $T_j^{-1}\log g_j$ in $\D_{\frac{3}{4}}$.

Define
\begin{equation*}\label{eq:Hj}
 H_j(z)=T_j^{-1}\log g_j(z)
       +T_j^{-1}\sum_{\substack{|\alpha|<R \text{ with } f_j(\alpha)=0\\ \text{ counted with multiplicity}}}
       \bigl(\log R-\log(R^2-\overline\alpha z)\bigr).
\end{equation*}
In each term of the sum, the logarithm is chosen to be real at the origin. Then 
\begin{equation*}\label{lemvj}
v_j=T_j^{-1}\log|f_j(z)|=\re H_j(z)+T_j^{-1}\sum_{\substack{|\alpha|<R \text{ with } f_j(\alpha)=0\\ \text{ counted with multiplicity}}}\log|z-\alpha|.\end{equation*}
For $|z|\leq\frac{3}{4}$,
$$
 \re(R^2-\overline\alpha z)\geq R(R-|z|)>\frac14,
 \qquad |R^2-\overline\alpha z|\leq\frac52.
$$
Thus these factors lie in a fixed compact subset of the right half-plane,
and the indicated logarithmic branches are uniformly bounded there.
It follows from~\eqref{eq:zero-count} that for $|z|\le \frac{3}{4}$

$$\left|T_j^{-1}\sum_{\substack{|\alpha|<R \text{ with } f_j(\alpha)=0\\ \text{ counted with multiplicity}}}
       \bigl(\log R-\log(R^2-\overline\alpha z)\bigr)\right|\le C_1T_j^{-1}m_j\le C_2,$$
where $C_1$ and $C_2$ are positive constants independent of $j$. Together with the uniform bound for $T_j^{-1}\log g_j$ in $\D_{\frac{3}{4}}$, we have
$$ \sup_j\|H_j\|_{L^\infty(\D_{\frac{3}{4}})}<\infty.$$
Cauchy's estimate for derivatives then gives a uniform bound for $H_j'$ in $\D_{\frac{1}{2}}$, and hence 
\begin{equation*}\label{eq:H-bounds}
 \sup_j\left(\|H_j\|_{L^\infty(\D_{\frac12})}
             +\|H'_j\|_{L^\infty(\D_{\frac12})}\right)<\infty.
\end{equation*}

Because $f_j$ is real on the real axis, Schwarz reflection and the identity
theorem show that its zeros are invariant under conjugation. Replace each
nonreal conjugate pair by mass $\frac{2}{T_j}$ at the zero in the lower half-plane,
and give each real zero mass $\frac{1}{T_j}$. This defines a positive measure $\mu_j$
on
$$
 K=\{a\in\C:|a|\leq\frac54,\ \im a\leq0\},\qquad \mu_j(K)\leq \frac{2C}{\log\frac{6}{5}},
$$
and the mass bound follows from \eqref{eq:zero-count}. Conjugate zeros have equal logarithmic moduli on the real axis. Hence
\begin{equation}\label{eq:potential-decomposition}
 v_j(x)=\int_K\log|x-a|\dd\mu_j(a)+\re H_j(x),\qquad x\in I.
\end{equation}
At a real zero, this identity is understood in the extended real sense.

{\it Step 2. Compactness of the logarithmic potentials.}
Define the map $\KK: K\to L^1(I)$ by $\KK(a)(x)=\log|x-a|.$ Then $\KK$ is continuous. To verify the only delicate case, let $a_0\in K\cap\R$. Fix $0<\delta<\frac16$ and assume that $a\in K$ satisfies
$|a-a_0|<\delta$. Set $E_\delta=I\cap(a_0-2\delta,a_0+2\delta).$
We estimate the integral separately on $E_\delta$ and its
complement. For $\zeta\in\{a,a_0\}$ and almost every $x\in E_\delta$,
we have
$$
|x-\re\zeta|
\le |x-\zeta|<3\delta<1.
$$
Consequently,
\begin{align*}
\int_{E_\delta}\bigl|\log|x-\zeta|\bigr|\,dx
\le
\int_{E_\delta}
-\log|x-\re\zeta|\,dx
\le
\int_{-3\delta}^{3\delta}-\log|t|\,dt
=6\delta\bigl(1-\log(3\delta)\bigr).
\end{align*}
It follows that
$$
\int_{E_\delta}\bigl|\log|x-a|-\log|x-a_0|\bigr|\,dx
\le 12\delta\bigl(1-\log(3\delta)\bigr).
$$

On the other hand, for $x\in I\setminus E_\delta$, we obtain 
$$
\bigl|\log|x-a|-\log|x-a_0|\bigr|\le \frac{|a-a_0|}{\delta}.
$$
Therefore,
$$
\|\KK(a)-\KK(a_0)\|_{L^1(I)}
\le
12\delta\bigl(1-\log(3\delta)\bigr)
+\frac{|I|}{\delta}|a-a_0|.
$$
First letting $a\to a_0$ with $\delta$ fixed, and then
letting $\delta\to0$, yields
$$
\|\KK(a)-\KK(a_0)\|_{L^1(I)}\longrightarrow0.
$$
Thus the map $\KK$ is continuous at every
$a_0\in K\cap\mathbb R$.
Continuity away from the real axis follows from uniform convergence. It also gives a uniform $L^1(I)$ bound for the kernels.

Since $K$ is a compact metric space and $\sup_j\mu_j(K)<\infty$, weak-$*$ sequential compactness yields, after passing to a subsequence, $\mu_j\rightharpoonup\mu$. Here, $\mu$ is a finite positive Borel measure. On the other hand, $\{H_j\}$ is a locally uniformly bounded family of holomorphic functions in $\mathbb D_{\frac{1}{2}}$. By Montel's theorem, we may pass to a further subsequence, still indexed by $j$, such that
$$
H_j\longrightarrow H
\qquad\text{locally uniformly in }\mathbb D_{\frac{1}{2}},
$$
where $H$ is holomorphic in $\mathbb D_{\frac{1}{2}}$.

For completeness, fix $\eps_0>0$. Compactness of $K$ and
continuity of $\KK$ give points $a_1,\ldots,a_L\in K$ and a continuous
partition of unity $\chi_1,\ldots,\chi_L$ such that
$$
 \left\|\KK(a)-\sum_{\ell=1}^L\chi_\ell(a)\KK({a_\ell})\right\|_{L^1(I)}
 \leq\eps_0\qquad(a\in K).
$$
For any positive finite measure $\lambda$ on $K$, integration gives
$$
 \left\|\int_K \KK(a)\dd\lambda(a)
 -\sum_{\ell=1}^L\left(\int_K\chi_\ell\dd\lambda\right)\KK({a_\ell})
 \right\|_{L^1(I)}\leq\eps_0\lambda(K).
$$
Applying this to $\lambda=\mu_j$ and $\lambda=\mu$, we have
\begin{align*}
\left\|
\int_K \KK(a)\,d\mu_j(a)-\int_K \KK(a)\,d\mu(a)
\right\|_{L^1(I)}
\le{}&
\eps_0\bigl(\mu_j(K)+\mu(K)\bigr)\\
&+\sum_{\ell=1}^L
\left|
\int_K\chi_\ell\,d\mu_j
-\int_K\chi_\ell\,d\mu
\right|
\|\KK({a_\ell})\|_{L^1(I)}.
\end{align*}
Weak convergence handles the
finitely many coefficients, so we fix $\eps_0>0$ first and let $j\to\infty$. Then the uniform mass bound handles the errors, and 
we further let $\eps_0\to 0$. Thus the left hand side tends to $0$ as $j\to\infty.$ Therefore,
\begin{equation}\label{eq:potential-L1}
 v_j\longrightarrow v(x):=\int_K\log|x-a|\,d\mu(a)+\re H(x)
 \quad\text{in }L^1(I).
\end{equation}
At this stage, the displayed potential is finite almost everywhere, by
Fubini's theorem and the uniform $L^1$ bound for the kernels.

{\it Step 3. Boundary reflection of a bounded holomorphic function.}
We now use~\eqref{eq:small-derivative}. Choose
$$
 C_0>1+\sup_j\|H'_j\|_{L^\infty(\D_{\frac{1}{2}})},
$$
and, in $\D_{\frac{1}{2}}\cap\{\im z>0\}$, set
\begin{equation*}\label{eq:Fj}
 F_j(z)=iC_0-H'_j(z)-\int_K\frac{\,d\mu_j(a)}{z-a}.
\end{equation*}
The location and positivity of the measures give
$$
 \im\left(-\frac1{z-a}\right)
   =\frac{\im z-\im a}{|z-a|^2}\geq0,
 \qquad z\in\D_{\frac{1}{2}}\cap\{\im z>0\},\ a\in K.
$$
It follows that $\im F_j\geq1$. At a real point which is not a zero,
differentiating~\eqref{eq:potential-decomposition} gives
\begin{equation}\label{eq:real-Fj}
 \re F_j(x)=-v'_j(x)=-\frac{f'_j(x)}{T_jf_j(x)}.
\end{equation}
The same lower bound for the imaginary part holds at such boundary points.

On every compact subset of $\D_{\frac{1}{2}}\cap\{\im z>0\}$, the kernels $(z-a)^{-1}$ and their $z$-derivatives are uniformly bounded and continuous in $a\in K$, since
$|z-a|\geq\im z$. Weak convergence and a finite net argument therefore imply locally uniform convergence of the Cauchy transforms $\int_K\frac{\,d\mu_j(a)}{z-a}$. Hence $F_j$ converges
locally uniformly in $\D_{\frac{1}{2}}\cap\{\im z>0\}$ to the finite holomorphic function
\begin{equation}\label{eq:F-limit}
 F(z)=iC_0-H'(z)-\int_K\frac{\dd\mu(a)}{z-a}.
\end{equation}
In particular, $\im F\geq1$.

Consider the Cayley transforms
\begin{equation}\label{eq:cayley}
 w_j=\frac{F_j-i}{F_j+i},\qquad w=\frac{F-i}{F+i}.
\end{equation}
They satisfy $|w_j|\leq1$, and $w_j\to w$ locally uniformly in $\D_{\frac{1}{2}}\cap\{\im z>0\}$.
Moreover, $w$ cannot take the value $1$ in $\D_{\frac{1}{2}}\cap\{\im z>0\}$, since $F$ is finite there. For real $x$ away from the zeros, write $F_j(x)=F^{(1)}_j+iF^{(2)}_j$, where $F_j^{(2)}\geq1$. Direct calculation gives
$$\im w_j(x)=\frac{-2F_j^{(1)}}{(F_j^{(1)})^2+(F_j^{(2)}+1)^2}.$$
By \eqref{eq:real-Fj}, for $0<q<1$,
$$
 |\im w_j(x)|\leq\min\{1,2|v'_j(x)|\}
 \leq C_q|v'_j(x)|^q.
$$
Consequently, by the assumption \eqref{eq:small-derivative},
\begin{equation}\label{eq:cayley-trace}
 \|\im w_j\|_{L^1(I)}\longrightarrow0.
\end{equation}
At a real zero of multiplicity $m$, the function $F_j$ has a simple pole with
residue $-\frac{m}{T_j}$. The formula
$$
 w_j=1-\frac{2i}{F_j+i}
$$
shows that this pole becomes a removable singularity of $w_j$, with value
$1$. Thus each $w_j$ is continuous on $I$, and these isolated points do not
affect~\eqref{eq:cayley-trace}.

We claim that $w$ extends holomorphically across $I$ and is real-valued there. Fix an upper half disk $V$ whose closed diameter $J_0$ lies in $I$, and denote
its semicircular arc by $\Gamma$. Apply harmonic measure representation in
$V$ to $\im w_j$. For fixed $z\in V$, the harmonic measure on $J_0$ is
dominated by the Poisson measure of the upper half-plane. To see this
domination, extend nonnegative boundary data on $J_0$ by zero to the real
line. Its upper half-plane Poisson extension dominates the solution in $V$
with zero data on $\Gamma$, by the maximum principle. The density on $J_0$
is therefore at most $(\pi\im z)^{-1}$. The absolute value of the diameter
contribution is bounded by
$$
 \frac1{\pi\im z}\int_{J_0}|\im w_j(x)|\,d x,
$$
which tends to zero by~\eqref{eq:cayley-trace}. 
Let $\omega_V^z$ denote the harmonic measure in $V$. Since $|\im w_j|\leq1$, we have
$$|\im w_j(z)|\le \frac1{\pi\im z}\int_{J_0}|\im w_j(x)|\,d x+\omega_V^z(\Gamma).$$
By letting $j\to\infty$, we obtain
$$
 |\im w(z)|\leq\omega_V^z(\Gamma).
$$
The right-hand side tends to zero as $z$ approaches an interior point of
$J_0$ from $V$. Thus $\im w$ has a continuous zero trace on $I$.

Odd reflection extends $\im w$ harmonically across each compact subinterval
of $I$. Taking a harmonic conjugate on a small disk extends $w$
holomorphically: in the upper half-disk, the constructed function and $w$
have the same imaginary part, so they differ by a real constant, which we
adjust. The resulting extension satisfies
$$
 w(z)=\overline{w(\overline z)}\qquad\text{for }\im z<0,
$$
locally across the interval. Its modulus remains at most $1$ on both sides.
If the extended function took the value $1$ at a point of $I$, the maximum
modulus principle would force it to be identically $1$ in a neighborhood.
This contradicts~\eqref{eq:cayley} in $\D_{\frac{1}{2}}\cap\{\im z>0\}$. Therefore
$F=\frac{i(1+w)}{1-w}$ extends holomorphically across $I$ and satisfies
\begin{equation}\label{eq:F-realzero}
 \re F(x)=0,\qquad x\in I.
\end{equation}
The locally defined extensions agree on overlaps by the identity theorem.

{\it Step 4. Identification of the trace and the pointwise upper limit.}
For $z\in\D_{\frac{1}{2}}\cap\{\im z>0\}$, let
$$
 U(z)=\int_K\log|z-a|\dd\mu(a)+\re H(z).
$$
This is a finite harmonic function: on each compact subset of $\D_{\frac{1}{2}}\cap\{\im z>0\}$ the kernel and all its derivatives are bounded uniformly in $a\in K$.
By \eqref{eq:F-limit}, writing $z=x+iy$, we have
\begin{equation*}\label{eq:U-gradient}
 U_x=-\re F,\qquad U_y=\im F-C_0.
\end{equation*}
The extension of $F$ bounds $\nabla U$ in an upper neighborhood of every
compact subinterval $J_1\Subset I$. In particular, for sufficiently small
$y,y'>0$,
$$
 \sup_{x\in J_1}|U(x+iy)-U(x+iy')|\leq C_{J_1}|y-y'|.
$$
Thus $U(\cdot+iy)$ has a finite, locally uniform boundary limit. Its
horizontal Lipschitz bound makes this trace continuous. If
$[x_1,x_2]\Subset I$, then by \eqref{eq:F-realzero},
$$
 U(x_2+iy)-U(x_1+iy)
 =-\int_{x_1}^{x_2}\re F(t+iy)\dd t\longrightarrow0\qquad\text{ as } y\to 0.
$$
The trace is therefore a single finite constant $c$
on the connected interval $I$.

For every fixed $x\in I$, the functions $\log|x+iy-a|$, $a\in K$, decrease to $\log|x-a|$ as $y\to 0$. Here the half-plane support condition is essential: writing $a=\alpha+i\gamma$ with $\gamma\leq0$ gives
$$
 |x+iy-a|^2=(x-\alpha)^2+(y-\gamma)^2.
$$
The logarithms have a common finite upper bound for small positive $y$. Monotone convergence, applied to that upper bound minus the kernel, and continuity of $H$ yield
\begin{equation}\label{eq:pointwise-potential}
 \int_K\log|x-a|\, d\mu(a)+\re H(x)=c,\qquad x\in I.
\end{equation}
In particular, the potential is finite at every point of $I$, not merely almost everywhere. Together with \eqref{eq:potential-L1}, this proves \eqref{eq:L1-constant}.

Fix $x\in I$. For $L>0$, the truncated kernel
$$
 \KK_L(a)=\max\{\log|x-a|,-L\}
$$
is continuous and bounded on $K$, with the value $-L$ assigned at $a=x$. Positivity of the measures and weak convergence give
$$
 \limsup_j\int_K\log|x-a|\dd\mu_j(a)
 \leq\int_K \KK_L(a)\dd\mu(a).
$$
Let $L\to\infty$, using~\eqref{eq:pointwise-potential}, and add the convergent term $\re H_j(x)$. This proves \eqref{eq:pointwise-upper}. The argument applies to every fixed $x$ for the subsequence already selected; no further
point dependent subsequence is used. This completes the proof.
\end{proof}

\begin{corollary}\label{cor:separation}
Fix constants $C>0$, $b>0$, $0<\eps<b$, $0<q<1$, and set $I=(-s,s)$ with $0<s<\frac{1}{4}$.
Let $J\Subset I\setminus\{0\}$ be a closed interval of positive length.
There are constants $\kappa>0$ and $T_0>0$, depending only on these parameters,
with the following property: Let $f$ be holomorphic in $\D_2$ and real-valued on $(-2,2)$. Suppose that for $T>T_0$, $f$ satisfies
\begin{equation}\label{eq:separation-hyp}
 \sup_{\D_2}|f|\leq e^{CT},\qquad
 |f(0)|\geq e^{-\eps T},\qquad
 \sup_J|f|\leq e^{-bT},
\end{equation}
then
\begin{equation}\label{eq:separation-concl}
 \int_I\left|\frac{f'(x)}{Tf(x)}\right|^q\dd x\geq\kappa.
\end{equation}
\end{corollary}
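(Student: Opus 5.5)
We argue by contradiction and reduce to Lemma \ref{lem:compactness}. Suppose no such $\kappa,T_0$ exist for the fixed parameters $C,b,\eps,q,s,J$. Then there are $T_j\to\infty$ and holomorphic functions $f_j$ in $\D_2$, real-valued on $(-2,2)$, satisfying \eqref{eq:separation-hyp} with $T=T_j$, and such that
$$
 \int_I\left|\frac{f_j'(x)}{T_jf_j(x)}\right|^q\dd x\longrightarrow0 .
$$
Since $\eps<b$, we may take $C\ge\eps$ without loss of generality. Then $|f_j(0)|\ge e^{-\eps T_j}\ge e^{-CT_j}$, so the hypotheses \eqref{eq:hol-growth} and \eqref{eq:small-derivative} of Lemma \ref{lem:compactness} hold. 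After passing to a subsequence, we obtain a finite constant $c$ with $v_j=T_j^{-1}\log|f_j|\to c$ in $L^1(I)$ and $\limsup_j v_j(x)\le c$ for every $x\in I$.

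Next we read off the value of $c$ from the two pointwise constraints. On $J$, the hypothesis gives $v_j\le -b$ pointwise. Because $J$ has positive length and $v_j\to c$ in $L^1(J)$, we get
$$
 c=\frac{1}{|J|}\lim_j\int_J v_j\dd x\le -b .
$$
At $x=0\in I$, the hypothesis gives $v_j(0)\ge -\eps$ for all $j$. Hence $\limsup_j v_j(0)\ge-\eps$. The pointwise upper limit \eqref{eq:pointwise-upper} at $x=0$ then yields $-\eps\le c$. Combining the two bounds gives $-\eps\le c\le -b$, which contradicts $\eps<b$. Therefore the constants $\kappa$ and $T_0$ exist.

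The constants depend only on the listed parameters because the contradiction sequence ranges over all admissible $f$ with those parameters fixed. The main point needing care is that the lower bound at the single point $0$ cannot be transferred through mere $L^1$ convergence. It is exactly the pointwise $\limsup$ conclusion \eqref{eq:pointwise-upper}, valid along the same subsequence at every fixed $x\in I$, that makes this step work. This is why Lemma \ref{lem:compactness} was proved with that refinement. The only other thing to check is that $0\in I$ and $J\subset I$, both of which are given.
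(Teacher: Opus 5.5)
Your proof is correct and is essentially the paper's argument: you argue by contradiction and apply Lemma~\ref{lem:compactness} with the growth constant enlarged to $\max\{C,\eps\}$ (which is what your ``take $C\ge\eps$'' amounts to). You then combine $L^1$ convergence on $J$, which gives $c\le -b$, with the pointwise upper limit \eqref{eq:pointwise-upper} at the origin, which gives $c\ge-\eps$, and this contradicts $\eps<b$.
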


\begin{proof}
We argue by contradiction. Suppose there are $T_j\geq j$ and functions $f_j$
satisfying~\eqref{eq:separation-hyp} for which the integral
in~\eqref{eq:separation-concl} tends to zero. Apply
Lemma~\ref{lem:compactness}, with the growth constant enlarged to
$\max\{C,\eps\}$. For a subsequence,
$T_j^{-1}\log|f_j|\to c$ in $L^1(I)$. By \eqref{eq:pointwise-upper} at the origin and \eqref{eq:separation-hyp},
\begin{equation}\label{eq:separation-contradiction}
 -\eps\leq\limsup_j T_j^{-1}\log|f_j(0)|\leq c,
 \qquad
 c|J|=\lim_j\int_JT_j^{-1}\log|f_j|\dd x\leq-b|J|.
\end{equation}
Since $|J|>0$, \eqref{eq:separation-contradiction} yields
$-\eps\leq c\leq-b$, which contradicts $\eps<b$. This completes the proof.
\end{proof}

\begin{remark}\label{rem:compactness}
The constants in Corollary~\ref{cor:separation} need not be explicit, and their uniform dependence on the fixed parameters is sufficient. The pointwise upper limit in Lemma \ref{lem:compactness} is essential: convergence in $L^1(I)$ alone does not preserve a lower bound at a prescribed point. The proof of Lemma \ref{lem:compactness} also does not invoke a Poincar\'e inequality with exponent $q<1$, and the conclusion follows instead from positivity of the reflected zero measures and the holomorphic boundary reflection argument.
\end{remark}

\section{Nodal volume and logarithmic gradients}\label{sec:gradient}
In this section, we first recall the Euclidean consequence of Logunov's theorem \cite[Theorem 1.2]{Logunov2018}. 
\begin{theorem}[Euclidean setting, \cite{Logunov2018}]\label{logunovlowerbound}
    There exists $c_n>0$ depending only on $n$, such that for any non-zero harmonic function $u$ in $B(p, r)\subset\R^n$ with $u(p)=0$, one has
    \begin{equation}\label{eq:nadirashvili}
 \HH^{n-1}\bigl(\{u=0\}\cap B(p,r)\bigr)\geq c_nr^{n-1}.
\end{equation}
\end{theorem}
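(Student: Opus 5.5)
The plan is to reduce the statement to the unit-ball case of \cite[Theorem 1.2]{Logunov2018}, using translation and dilation invariance. That theorem (in its Euclidean, flat-metric form) says the following. There is $c_n>0$ such that every non-zero harmonic function $v$ on $B_1\subset\R^n$ with $v(0)=0$ satisfies $\HH^{n-1}(\{v=0\}\cap B_1)\ge c_n$.

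Given $u$ harmonic and non-zero on $B(p,r)$ with $u(p)=0$, define
$$v(x)=u(p+rx),\qquad x\in B_1.$$
The steps are:
\begin{enumerate}[label=(\roman*)]
\item Laplace's equation is invariant under translations and dilations, since $\Delta v(x)=r^2(\Delta u)(p+rx)=0$. So $v$ is harmonic on $B_1$. It is non-zero because $u$ is, and $v(0)=u(p)=0$.
\item The map $\Phi(x)=p+rx$ is a similarity of ratio $r$ from $B_1$ onto $B(p,r)$, and $\Phi(\{v=0\}\cap B_1)=\{u=0\}\cap B(p,r)$. The Hausdorff measure $\HH^{n-1}$ scales by the factor $r^{n-1}$ under such a map, so
$$\HH^{n-1}\bigl(\{u=0\}\cap B(p,r)\bigr)=r^{n-1}\,\HH^{n-1}\bigl(\{v=0\}\cap B_1\bigr).$$
\item Applying Logunov's unit-ball bound to $v$ gives \eqref{eq:nadirashvili}.
\end{enumerate}

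The only point needing care is matching the exact form of \cite[Theorem 1.2]{Logunov2018}. That result is stated on a Riemannian manifold and may be phrased on a smaller ball, say $B(0,\rho_0)$ for a fixed dimensional $\rho_0$, or with the zero set intersected with a concentric sub-ball. I would check this first.

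If it is so phrased, the fix is to apply the theorem to $v$ restricted to $B_{\rho_0}$, or to its rescaling $x\mapsto v(\rho_0x)$. This gives a lower bound for the nodal volume in $B_{\rho_0}\subset B_1$, hence for $B_1$ by monotonicity of measure, with the constant changed by a factor $\rho_0^{n-1}$ depending only on $n$.

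Constants depending on a curvature bound in Logunov's statement play no role here, because the metric is flat. So the resulting $c_n$ depends only on $n$.
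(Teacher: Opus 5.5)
Your proposal is correct and matches the paper's treatment. The paper takes this statement directly from Logunov's theorem and only notes that the constant is uniform under Euclidean translations and dilations, which is exactly your rescaling $v(x)=u(p+rx)$ combined with the $r^{n-1}$ scaling of $\HH^{n-1}$; the paper also proves the unit-ball case independently in Appendix~\ref{sec:appendix} (Theorem~\ref{thm:A1-unit}) and recovers the statement by the same translation and dilation.
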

The above result was conjectured by Nadirashvili, and proved by Logunov. The constant $c_n$ is uniform under Euclidean translations and dilations. The
argument below combines \eqref{eq:nadirashvili} with a covering calculation
and the classical gradient estimate for positive harmonic functions. In particular, it does not assume a lower bound involving $\NN$.

\begin{proposition}\label{prop:gradient-upper}
Let $u$ be a non-zero harmonic function in $B_4\subset\R^n$,
where $n\geq3$. For every $0<q<1$,
\begin{equation}\label{eq:gradient-upper}
 \int_{B_{\frac{8}{5}}}\bigl|\nabla\log|u|\bigr|^q\dd x\leq C_{n,q}\bigl(1+\HH^{n-1}\bigl(\{u=0\}\cap B_2\bigr)\bigr)^q,
\end{equation}
where $C_{n, q}$ is a positive constant depending only on $n$ and $q$.
\end{proposition}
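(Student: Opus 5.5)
We prove \eqref{eq:gradient-upper} by a pointwise bound on the logarithmic gradient in terms of the distance to the nodal set, followed by a covering computation in which Theorem~\ref{logunovlowerbound} converts distances into nodal volume. Set $Z=\{u=0\}$, $d(x)=\dist(x,Z)$ and $\Lambda=\HH^{n-1}(Z\cap B_2)$.

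\textbf{Pointwise estimate.} Fix $x\in B_{\frac85}\setminus Z$ and let $r(x)=\min\{d(x),\frac15\}$. On $B(x,r(x))$ the function $u$ has a constant sign, so $|u|$ is a positive harmonic function there. The classical gradient estimate for positive harmonic functions (Harnack) gives
$$
\bigl|\nabla\log|u|(x)\bigr|\le \frac{C_n}{r(x)}.
$$
Consequently
$$
\int_{B_{\frac85}}\bigl|\nabla\log|u|\bigr|^q\,dx\le C_{n,q}\Bigl(1+\int_{B_{\frac85}\cap\{d<\frac15\}}d(x)^{-q}\,dx\Bigr).
$$
The term $1$ bounds the contribution of points with $d\ge\frac15$.

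\textbf{Layer-cake and covering.} Decompose dyadically. Let $A_k=\{x\in B_{\frac85}:2^{-k-1}\le d(x)<2^{-k}\}$ for $k\ge 2$ (so that $2^{-k}\le\frac14$). Then
$$
\int_{\{d<\frac15\}} d^{-q}\le \sum_k 2^{(k+1)q}\,\VV(A_k),
$$
and the set $\{d=0\}=Z$ has measure zero. To bound $\VV(A_k)$, take a maximal $2^{-k}$-separated set $\{z_i\}$ in $Z\cap B_{\frac85+2^{-k}}$. The balls $B(z_i,3\cdot 2^{-k})$ cover $A_k$, so $\VV(A_k)\le C_n N_k 2^{-kn}$, where $N_k$ is the number of points. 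The balls $B(z_i,2^{-k-1})$ are disjoint and contained in $B_2$ (since $\frac85+2^{-k}+2^{-k-1}<2$). Since $u(z_i)=0$, Theorem~\ref{logunovlowerbound} applied in each such ball gives
$$
\HH^{n-1}\bigl(Z\cap B(z_i,2^{-k-1})\bigr)\ge c_n 2^{-(k+1)(n-1)},
$$
hence $N_k\le C_n 2^{k(n-1)}\Lambda$. Therefore $\VV(A_k)\le C_n 2^{-k}\Lambda$. Trivially also $\VV(A_k)\le C_n$.

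\textbf{Summation.} We have
$$
\sum_k 2^{kq}\min\{C_n,\,C_n2^{-k}\Lambda\}.
$$
If $\Lambda\le 1$, use the second bound throughout to obtain $C_{n,q}\Lambda\le C_{n,q}$, since $q<1$. If $\Lambda>1$, split at $2^{k_0}\approx\Lambda$. The terms with $k\le k_0$ contribute at most $C\,2^{k_0q}\approx C\Lambda^q$. The terms with $k>k_0$ contribute at most $C\Lambda 2^{-k_0(1-q)}\approx C\Lambda^q$, again using $q<1$. Altogether this gives $C_{n,q}(1+\Lambda)^q$.

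\textbf{Main obstacle.} The only delicate point is the uniform counting $N_k\lesssim 2^{k(n-1)}\Lambda$. It needs Theorem~\ref{logunovlowerbound} at every small scale, with a constant independent of scale and location, together with checking that the disjoint balls remain inside $B_2$. The convergence of the sum is exactly where $q<1$ enters.
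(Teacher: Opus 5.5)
Your proof is correct and follows essentially the same route as the paper. Both arguments use the pointwise bound $\bigl|\nabla\log|u|\bigr|\lesssim 1/\min\{c,d(x)\}$ from the gradient estimate for positive harmonic functions. Both then combine a maximal separated net in the zero set with Theorem~\ref{logunovlowerbound} to get the tube estimate $\VV(\{d<t\}\cap B_{\frac85})\lesssim t\,\HH^{n-1}(\{u=0\}\cap B_2)$, and finish with an integration that uses $q<1$. The differences are cosmetic: your dyadic sum over distance shells replaces the paper's layer-cake integral over $\{g>\lambda\}$, and you measure distance to the whole zero set rather than to $\{u=0\}\cap B_2$. The latter is harmless, since the relevant nearby zeros lie in $B_{\frac{37}{20}}$.
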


\begin{proof}
We may assume $\HH^{n-1}\bigl(\{u=0\}\cap B_2\bigr)<\infty$. Note $\{u=0\}$ has zero Lebesgue measure. Define
\begin{equation*}
g(x) = \left\{
\begin{aligned}
\frac{|\nabla u(x)|}{|u(x)|} \quad & \quad \text{ if } x\in B_{\frac85}\setminus\{u=0\}, \\
0 \quad &\quad \text{ if } x\in \{u=0\}\cap B_{\frac{8}{5}}.
\end{aligned}
\right.
\end{equation*}

{\it Step 1. A zero-free ball controls the logarithmic gradient.}
Write $$d(x)=\dist(x, \{u=0\}\cap B_2),$$ with $d(x)=\infty$ if the zero set is
empty. For $x\in B_{\frac{8}{5}}\setminus \{u=0\}$, the ball
\begin{equation}\label{eq:zero-free-ball}
 B\!\left(x,\min\left\{\frac1{20},\frac{d(x)}2\right\}\right)
 \subset B_2\setminus \{u=0\}
\end{equation}
is connected, so either $u$ or $-u$ is positive there. If $v>0$ is harmonic
in $B(x,R)$, the gradient estimate gives
\begin{equation}\label{eq:positive-gradient}
 |\nabla v(x)|\leq\frac nR v(x).
\end{equation}
Applying \eqref{eq:positive-gradient} in
\eqref{eq:zero-free-ball}, we obtain
\begin{equation}\label{eq:gradient-distance}
 g(x)\leq\frac{C_n}{\min\{\frac{1}{20},d(x)\}}.
\end{equation}
The fixed scale $\frac{1}{20}$ is necessary even when $ \{u=0\}\cap B_2$ is empty.

{\it Step 2. The volume of the thin neighborhood of the zero set.}
Fix $0<t<\frac{1}{20}$. Choose a maximal $t$-separated set
$\{z_1,\ldots,z_m\}\subset \{u=0\}\cap B_{\frac{9}{5}}$. It is finite by Euclidean
packing, and
\begin{equation}\label{eq:packing-cover}
 B(z_i,\frac{t}{3})\cap B(z_j,\frac{t}{3})=\varnothing\quad(i\ne j),\qquad
  \{u=0\}\cap B_{\frac{9}{5}}\subset\bigcup_{i=1}^m\overline{B(z_i,t)}.
\end{equation}
All closed balls $\overline{B(z_i,\frac{t}{3})}$ lie in $B_2$. Hence
\eqref{eq:nadirashvili} and the disjointness in \eqref{eq:packing-cover} give
$$ mc_nt^{n-1}
 \leq\sum_{i=1}^m\HH^{n-1}( \{u=0\}\cap B(z_i,\frac{t}{3}))
 \leq \HH^{n-1}\bigl(\{u=0\}\cap B_2\bigr),$$
and hence
\begin{equation}\label{eq:packing-count}
m\leq C_nt^{1-n}\HH^{n-1}\bigl(\{u=0\}\cap B_2\bigr).
\end{equation}
If $x\in B_{\frac{8}{5}}$ and $d(x)<t$, choose a zero $z$ with $|x-z|<t$. Then
\begin{equation}\label{eq:tube-cover}
 |z|<\frac85+\frac1{20}<\frac95,
 \qquad |z-z_i|\leq t\ \text{for some }i,
 \qquad |x-z_i|<2t.
\end{equation}
It follows from \eqref{eq:packing-count} and \eqref{eq:tube-cover} that
\begin{equation}\label{eq:tube-estimate}
 \bigl|\{x\in B_{\frac{8}{5}}:d(x)<t\}\bigr|
 \leq m\omega_n(2t)^n\leq C_nt\HH^{n-1}\bigl(\{u=0\}\cap B_2\bigr).
\end{equation}
If the set of centers is empty (i.e., $m=0$), then the set on the left hand side of \eqref{eq:tube-estimate} is empty as well. This proof uses no smooth tubular
parameterization of $\{u=0\}$ and therefore includes its singular points.

{\it Step 3. Distribution estimate and integration.}
Choose $\lambda_0(n)$ so that $\frac{C_n}{\lambda_0}<\frac{1}{20}$, with the constant $C_n$ from
\eqref{eq:gradient-distance}. For $\lambda\geq\lambda_0$, it gives
\begin{equation*}\label{eq:tail-inclusion}
 \{x\in B_{\frac{8}{5}}:g(x)>\lambda\}
 \subset\{x\in B_{\frac{8}{5}}:d(x)<\frac{C_n}{\lambda}\}.
\end{equation*}
Thus \eqref{eq:tube-estimate} implies for $\lambda\ge\lambda_0$
\begin{equation}\label{eq:weak-gradient}
 |\{x\in B_{\frac{8}{5}}:g(x)>\lambda\}|
 \leq\min\left\{|B_{\frac{8}{5}}|,C_n\lambda^{-1}\HH^{n-1}\bigl(\{u=0\}\cap B_2\bigr)\right\}.
\end{equation}
Set $\lambda_*=C_n\left(1+\HH^{n-1}\bigl(\{u=0\}\cap B_2\bigr)\right)\geq\lambda_0$. Since $g$ is nonnegative, using \eqref{eq:weak-gradient}, we get
\begin{align*}
 \int_{B_{\frac{8}{5}}}g^q\,d x
 &=q\int_0^\infty\lambda^{q-1}
       |\{x\in B_{\frac{8}{5}}:g(x)>\lambda\}|\dd\lambda\notag\\
 &\leq |B_{\frac{8}{5}}|\lambda_*^q
       +C_nq\HH^{n-1}\bigl(\{u=0\}\cap B_2\bigr)\int_{\lambda_*}^\infty\lambda^{q-2}\,d\lambda\notag\\
 &=|B_{\frac{8}{5}}|\lambda_*^q
       +\frac{C_nq}{1-q}\lambda_*^{q-1}\HH^{n-1}\bigl(\{u=0\}\cap B_2\bigr)\notag\\
 &\leq C_{n,q}\bigl(1+\HH^{n-1}\bigl(\{u=0\}\cap B_2\bigr)\bigr)^q.\label{eq:layer-cake}
\end{align*}
This completes the proof.
\end{proof}

\section{Harmonic functions with stable growth}\label{sec:stable}
Following the stable growth formulation introduced by Logunov, Priya and Sartori in \cite[Definition 6.1]{LPS}, we consider the condition
\begin{equation}\label{eq:stable}
 \NN(B_2)\le S\NN(B_{\frac{1}{2}}),   
\end{equation}
where $S\ge 1$ is fixed. We say that a non-zero harmonic function $u: B_4\to \R$ has {\it $S$-stable growth} in $B_1$ if its doubling index satisfies \eqref{eq:stable}. Throughout this section, we denote $\NN=\NN(B_{\frac{1}{2}})$.

The following is the main result in this section, and the assumption of $S$-stable growth will be removed in Section \ref{sec:main-proof}.
\begin{proposition}\label{prop:stable}
For every $n\geq3$ and $S\geq1$, there is $C_{n,S}>0$ such that every non-zero harmonic function $u:B_4\to\R$ satisfying the $S$-stable growth \eqref{eq:stable} has the following estimate.
\begin{equation}\label{eq:stable-conclusion}
 \NN\leq C_{n,S}\bigl(1+\HH^{n-1}\bigl(\{u=0\}\cap B_2\bigr)\bigr).
\end{equation}
\end{proposition}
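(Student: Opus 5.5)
Under $S$-stable growth we compare two bounds on the fractional moment $\int_{B_{8/5}}|\nabla\log|u||^q\,dx$ for a fixed $q\in(0,1)$. Proposition \ref{prop:gradient-upper} bounds it above by $C_{n,q}(1+\HH^{n-1}(\{u=0\}\cap B_2))^q$. We aim for the lower bound
\[
\int_{B_{8/5}}\bigl|\nabla\log|u|\bigr|^q\,dx\ \ge\ \kappa\,\NN^q
\]
whenever $\NN\ge T_0(n,S)$. Together these give $\NN\le C(1+\HH^{n-1})$ for large $\NN$. The case $\NN\le T_0$ is trivial once $C_{n,S}\ge T_0$.

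To prove the lower bound, normalize $M(\frac12)=1$, so that $M(1)=2^{\NN}$. Stable growth gives $M(4)\le 2^{C S\NN}$, using the comparison $M(4)\le M(2)^{C}$ on comparable scales or iterating the doubling index. We then restrict $u$ to lines: for $x_0$ and a unit direction $e$, set $f(z)=u(x_0+\rho z e)$ with $\rho$ small (say $\rho\approx \frac15$). This function extends holomorphically to $\D_2$ because a harmonic function in $B_4$ extends holomorphically to a complex neighborhood in $\C^n$, with the bound $|f|\le C M(4)\le e^{CT}$, $T=\NN$. Taking $x_0$ to be a point where $|u(x_0)|$ is near $M(\frac12)$, or more flexibly a point in a set of positive proportion where $|u|\ge e^{-\eps T}$, gives $|f(0)|\ge e^{-\eps T}$.

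The separation hypothesis $\sup_J|f|\le e^{-bT}$ must come from the doubling: $u$ is of size $\le 1$ on $B_{1/2}$ but reaches $2^{\NN}$ on $B_1$. It is cleaner to renormalize $f$ by $2^{-\NN}$, or to choose $x_0$ near a maximum point in $\overline B_1$ and $J$ a segment pointing into $B_{1/2}$, where $|f|\le 2^{-\NN}|f(0)|$. Rescaled, this gives $|f(0)|\ge 1$ and $\sup_J|f|\le e^{-(\log 2)T}$ relative to $\sup_{\D_2}|f|\le e^{CST}$. For robustness we perturb $x_0$ and $e$ over a set of positive measure: the parameters $x_0\in B(x_*,r)$ and $e$ in an open cone. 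On this set the lower value $|u(x_0)|\ge e^{-\eps T}2^{\NN}$ persists by a Bernstein/Remez-type estimate, with $\eps<b=\log 2$ after adjusting. Corollary \ref{cor:separation} then gives $\int_I|f'/(Tf)|^q\ge\kappa$ along each such line. Since $|f'|\le \rho|\nabla u|$, integrating over the family of lines and using Fubini (the lines sweep out a set inside $B_{8/5}$ with bounded Jacobian) yields $\int_{B_{8/5}}|\nabla\log|u||^q\ge c\kappa T^q$.

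The main obstacle is producing, uniformly with constants depending only on $n,S$, a positive-measure family of segments along which both hypotheses of Corollary \ref{cor:separation} hold with $\eps<b$. On the segment the value at the base point must be within $e^{-\eps T}$ of the global scale, while a fixed subinterval $J$ sits in a region $e^{-bT}$ smaller. My first attempt would choose $x_*\in\partial B_{1}$ with $|u(x_*)|=M(1)$. For the base point I would use the small-ball lower bound from Bernstein's inequality: $|\nabla u|\le C\NN M(1+\delta)$ keeps $|u|\ge\frac12 M(1)$ only on a ball of radius $\sim 1/\NN$, which is too small. So instead I would take $\eps$ close to but less than $\log 2$ and use that $\{|u|\ge e^{-\eps T}M(1)\}$ near $x_*$ has measure bounded below, via a Remez/propagation-of-smallness argument on the holomorphic restrictions. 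I would then shrink $J$ into $B_{1/2}$ so that the gap $b-\eps>0$ remains.
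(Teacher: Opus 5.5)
Your architecture matches the paper's: an upper bound on $\int_{B_{8/5}}|\nabla\log|u||^q$ from Proposition~\ref{prop:gradient-upper}, a lower bound $\kappa\NN^q$ from Corollary~\ref{cor:separation} applied on a positive-measure family of segments, then Fubini. But the step you call the main obstacle is left open, and the configuration you propose does not fit the corollary.

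Corollary~\ref{cor:separation} needs $J\Subset(-s,s)$ with $s<\frac14$ and $\sup_{\D_2}|f|\le e^{CT}$. For a general harmonic function in $B_4$, $t\mapsto u(x_0+te)$ is holomorphic only for $|t|<4-|x_0|$. So $f(z)=u(x_0+\rho ze)$ forces $2\rho<4-|x_0|$, and $J$ lies within distance $\rho s<\frac38$ of $x_0$ (within $\frac1{20}$ for your $\rho\approx\frac15$). With $x_0$ near a maximum point on $\partial B_1$, $J$ cannot enter $B_{\frac12}$. Hence the rate $b=\log 2$ coming from $M(\frac12)=2^{-\NN}M(1)$ is unavailable, and ``shrinking $J$ into $B_{\frac12}$'' is not an option.

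Decay over a short segment has to come from a frequency lower bound on a thin annulus. This is Lemma~\ref{lem:inner-decay}: $\beta\ge c_n\NN$ on $[\frac{21}{20},4)$ gives $M(\frac{149}{100})\le e^{-b\NN}M(\frac32)$ with a small dimensional $b$. The base points then sit on the sphere of radius $\frac32$, where the normalization $M(\frac32)=1$ is made. Consequently $\eps$ must be a small fraction of a small $b$ (the paper uses $\eps=\frac b8$), not something close to $\log 2$. Separately, $M(4)\le M(2)^C$ is not scale invariant. What you need is $M(4)\le e^{C_{n,S}\NN}M(\frac32)$, which follows from stable growth and log-convexity of $L$ as in Lemma~\ref{lem:outer-growth}.

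So the genuinely missing idea is this: for an arbitrarily small fixed $\eps>0$, produce a set of base points of measure at least $\delta(n,S,\eps)>0$ on which $|u|\ge e^{-\eps\NN}$ times the maximum. Bernstein fails, as you observed. The standard Remez/propagation-of-smallness inequality for harmonic functions, $\sup_B|u|\le C(C|B|/|E|)^{C\NN}\sup_E|u|$, loses a factor of at least $C^{C\NN}$ even as $|B\setminus E|\to0$, which swamps $e^{-\eps\NN}$. You need a Remez inequality whose exponential loss vanishes with the size of the exceptional set.

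The paper obtains this in Lemma~\ref{lem:directions}. Using the coefficient bounds $|P_k|\le C_n(1+k)^{\frac{n-2}{2}}3^{-k}M(3)$ and stable growth, it approximates $u(\frac32\,\cdot)$ on $\Sph^{n-1}$ within $e^{-2\eps\NN}$ by a spherical polynomial $p_m$ of degree $m=\lceil\Lambda\NN\rceil$. It then applies the trigonometric Remez inequality $\|\mathcal T\|_\infty\le te^{C_1ms}$ on every great circle through a maximum point of $p_m$, with $s$ chosen so that $C_1(\Lambda+1)s<\frac\eps2$. Each such circle carries a set of length at least $s$ where $|p_m|\ge2e^{-\eps\NN}$, and integrating over circles gives $\sigma(E)\ge c_ns^{n-1}$. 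Radial rays over $E$ then make the Fubini step immediate. Without this ingredient, or an equivalent sharp two-constants estimate, your lower bound $\int g^q\ge\kappa\NN^q$ is not established.
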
 

We first record some facts used in the proof; see
\cite[Chapter 5]{ABR} and \cite[Section 3 and Appendix A]{LPS}.
For $0<r<4$, let
\begin{equation}\label{eq:spherical-mean}
 L(r)^2=\int_{\Sph^{n-1}}u(r\omega)^2\dd\mu(\omega),
 \qquad d\mu=\frac{d\sigma}{|\Sph^{n-1}|}.
\end{equation}
Write $u=\sum_{k\geq0}P_k$ as its expansion in homogeneous harmonic
polynomials, then orthogonality of different degrees on the sphere gives
\begin{equation*}\label{eq:spherical-expansion}
 L(r)^2=\sum_{k\geq0}a_kr^{2k},\qquad
 a_k=\int_{\Sph^{n-1}}P_k(\omega)^2\dd\mu(\omega)\geq0.
\end{equation*}
Since $u$ is a non-zero harmonic function, at least one $a_k$ is positive. Hence $L(r)>0$, and
\begin{equation*}\label{eq:frequency}
 \beta(r)=\frac{d\log L(r)}{d\log r}
 =\frac{rL'(r)}{L(r)}
 =\frac{\sum_{k\geq0}ka_kr^{2k}}{\sum_{k\geq0}a_kr^{2k}}.
\end{equation*}
To display monotonicity, put $$p_k(r)=\frac{a_kr^{2k}}{\sum_j a_jr^{2j}}$$ in this
calculation only. Then
\begin{equation}\label{eq:frequency-weights}
 \sum_kp_k=1,\qquad \beta=\sum_k kp_k,\qquad
 \frac{dp_k}{d\log r}=2(k-\beta)p_k.
\end{equation}
Differentiation of
\eqref{eq:frequency-weights} gives
\begin{align}
 \frac{d\beta}{d\log r}
 &=2\sum_k k(k-\beta)p_k
 =2\left(\sum_k k^2p_k-\beta^2\right)\notag\\
 &=2\sum_k(k-\beta)^2p_k\geq0.
 \label{eq:frequency-monotonicity}
\end{align}
All termwise differentiations are justified on compact subintervals of
$(0,4)$. In particular, the function $t\mapsto\log L(e^t)$ is convex.
This is convexity with respect to $\log r$, not to $r$. Equivalently, if 
$$
 \log r_2=\theta\log r_1+(1-\theta)\log r_3,
 \qquad 0<r_1<r_2<r_3<4,
$$
then
\begin{equation}\label{eq:three-sphere}
 L(r_2)\leq L(r_1)^\theta L(r_3)^{1-\theta}.
 \end{equation}
Thus the intermediate radius is the geometric interpolation
$r_2=r_1^\theta r_3^{1-\theta}$.

Differentiating \eqref{eq:spherical-mean} and applying Green's identity yields
\begin{equation*}\label{eq:almgren-identity}
 2L(r)L'(r)
 =\frac{2r^{1-n}}{|\Sph^{n-1}|}
       \int_{\partial B_r}u\partial_\nu u\,d\sigma
 =\frac{2r^{1-n}}{|\Sph^{n-1}|}
       \int_{B_r}|\nabla u|^2\dd x,
\end{equation*}
and hence
\begin{equation*}\label{eq:almgren-identity-2}
 \beta(r)=\frac{r\int_{B_r}|\nabla u|^2\,d x}
                  {\int_{\partial B_r}u^2\,d\sigma}.
\end{equation*}
This is the usual Almgren frequency. Integrating
$\frac{L'(r)}{L(r)}=\frac{\beta(r)}{r}$ gives the identity used repeatedly below:
\begin{equation}\label{eq:frequency-ratio}
 \log\frac{L(r_2)}{L(r_1)}
 =\int_{r_1}^{r_2}\frac{\beta(t)}t\dd t,
 \qquad 0<r_1<r_2<4.
\end{equation}

We also use a direct comparison between the maximum and the spherical
mean. In normalized surface measure the Poisson formula reads
\begin{equation*}\label{eq:poisson-kernel}
 u(x)=\int_{\Sph^{n-1}}
 \frac{1-|\frac{x}{R}|^2}{|\frac{x}{R}-\omega|^n}u(R\omega)\dd\mu(\omega),
 \qquad |x|<R<4.
\end{equation*}
For $0<a<R$, the kernel has bounded $L^2(d\mu)$ norm uniformly for $|x|\leq a$. Recalling that $M(a)=\sup_{B_a}|u|$, the Cauchy--Schwarz inequality gives
\begin{equation}\label{eq:poisson-comparison}
 L(a)\leq M(a),\qquad M(a)\leq C\bigl(n,\frac a R\bigr)L(R).
\end{equation}
All radius ratios used
in Lemmas~\ref{lem:inner-decay} and \ref{lem:outer-growth} are fixed, so their
comparison constants depend only on $n$.

\begin{lemma}\label{lem:inner-decay}
For every non-zero harmonic function $u: B_4\to \R$, we have
\begin{equation}\label{eq:frequency-lower}
 \beta\bigl(\frac{21}{20}\bigr)\geq c_n\NN-C_n,
\end{equation}
where $c_n$ and $C_n$ are positive constants depending only on $n$.
Moreover, there exist positive constants $b$ and $N_1$ depending only on $n$, such that
\begin{equation}\label{eq:inner-decay}
 M\bigl(\frac{149}{100}\bigr)\leq e^{-b\NN}M\bigl(\frac{3}{2}\bigr)\qquad \text{ if } \NN\geq N_1.
\end{equation}
\end{lemma}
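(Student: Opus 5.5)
We prove \eqref{eq:frequency-lower} first, using monotonicity of the frequency \eqref{eq:frequency-monotonicity}, the ratio identity \eqref{eq:frequency-ratio}, and the comparison \eqref{eq:poisson-comparison}.

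\textbf{Step 1: proof of \eqref{eq:frequency-lower}.} By \eqref{eq:poisson-comparison} we have
$$M(1)\le C_n L\bigl(\tfrac{21}{20}\bigr)\qquad\text{and}\qquad M\bigl(\tfrac12\bigr)\ge L\bigl(\tfrac12\bigr).$$
Hence
$$\NN\log 2\le \log\frac{L(\frac{21}{20})}{L(\frac12)}+C_n.$$
By \eqref{eq:frequency-ratio} and the monotonicity of $\beta$,
$$\log\frac{L(\frac{21}{20})}{L(\frac12)}=\int_{1/2}^{21/20}\frac{\beta(t)}{t}\,dt\le \beta\bigl(\tfrac{21}{20}\bigr)\log\tfrac{21}{10}.$$
Combining the two gives $\beta(\frac{21}{20})\ge c_n\NN-C_n$.

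\textbf{Step 2: proof of \eqref{eq:inner-decay}.} The two radii $\frac{149}{100}$ and $\frac32$ are very close, so one cannot simply compare $M$ at these radii. Instead we pass through spherical means with a small margin. Since $\beta$ is nondecreasing, $\beta(t)\ge\beta(\frac{21}{20})\ge c_n\NN-C_n$ for all $t\ge\frac{21}{20}$.

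We upper-bound $M(\frac{149}{100})$ by $L(r_1)$ for some fixed $r_1\in(\frac{149}{100},\frac32)$, say $r_1=\frac{299}{200}$. By \eqref{eq:poisson-comparison}, $M(\frac{149}{100})\le C_n L(r_1)$, with a constant depending only on the fixed ratio $\frac{149}{100}/r_1<1$. We then lower-bound $M(\frac32)\ge L(\frac32)$. By \eqref{eq:frequency-ratio},
$$\log\frac{L(\frac32)}{L(r_1)}\ge (c_n\NN-C_n)\log\frac{3/2}{r_1}=c_n'\NN-C_n'.$$
Therefore
$$M\bigl(\tfrac{149}{100}\bigr)\le C_n e^{-c_n'\NN+C_n'}M\bigl(\tfrac32\bigr).$$
Choosing $b=c_n'/2$ and $N_1$ large enough that $C_ne^{C_n'}\le e^{c_n'\NN/2}$ for $\NN\ge N_1$ gives \eqref{eq:inner-decay}.

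\textbf{Main obstacle.} The only delicate point is bookkeeping. The Poisson comparison constant $C(n,a/R)$ must be taken at fixed ratios strictly less than one, and the gap $\log\frac{3/2}{r_1}$ is a fixed positive number depending only on these choices. The decay rate $b$ is therefore tiny but depends only on $n$. No stable growth assumption is needed, because frequency monotonicity alone propagates the lower bound outward.
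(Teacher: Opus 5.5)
Your proposal is correct and matches the paper's proof essentially step for step. Both use the same comparisons $M(1)\le C_nL(\frac{21}{20})$ and $L(\frac12)\le M(\frac12)$, the same bound $\int_{1/2}^{21/20}\beta(t)t^{-1}\,dt\le\beta(\frac{21}{20})\log\frac{21}{10}$, and the same intermediate radius $\frac{299}{200}$. The only cosmetic difference is that you carry the additive constant $-C_n$ to the end, whereas the paper first absorbs it into $\beta(r)\ge c_n\NN$ for large $\NN$.
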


\begin{proof}
Applying \eqref{eq:poisson-comparison} at radii $1<\frac{21}{20}$ and at $\frac{1}{2}$, we have
$$ M(1)\leq C_0L\bigl(\frac{21}{20}\bigr),\qquad L\bigl(\frac{1}{2}\bigr)\leq M\bigl(\frac{1}{2}\bigr),$$
where $C_0\geq1$ depends only on $n$. Therefore,
\begin{equation*}\label{eq:inner-L-ratio}
 \log\frac{L\bigl(\frac{21}{20}\bigr)}{L\bigl(\frac{1}{2}\bigr)}\geq \NN\log2-\log C_0,
\end{equation*}
By
\eqref{eq:frequency-monotonicity} and \eqref{eq:frequency-ratio},
\begin{equation*}\label{eq:inner-frequency-endpoint}
 \NN\log2-\log C_0
 \leq\int_{\frac{1}{2}}^{\frac{21}{20}}\frac{\beta(t)}t\,d t
 \leq\beta\bigl(\frac{21}{20}\bigr)\log\bigl(\frac{21}{10}\bigr).
\end{equation*}
This proves \eqref{eq:frequency-lower}. Increasing the threshold for $\NN$
and decreasing the positive coefficient gives
\begin{equation}\label{eq:frequency-large}
 \beta(r)\geq c_n\NN\qquad \text{ for }\frac{21}{20}\leq r<4.
\end{equation}
Insert the intermediate radius $\frac{299}{200}$ between $\frac{149}{100}$ and $\frac{3}{2}$.
Equations \eqref{eq:frequency-ratio}, \eqref{eq:poisson-comparison}, and
\eqref{eq:frequency-large} give
\begin{align}
 M\bigl(\frac{149}{100}\bigr)
 &\leq C_1L\bigl(\frac{299}{200}\bigr)\notag\\
 &\leq C_1L\bigl(\frac{3}{2}\bigr)
        \exp\left(-c_n\NN\log\frac{300}{299}\right)\notag\\
 &\leq C_1M\bigl(\frac{3}{2}\bigr)
        \exp\left(-c_n\NN\log\frac{300}{299}\right).
 \label{eq:inner-decay-before-absorption}
\end{align}
Take $b=\frac{c_n}{2}\log\bigl(\frac{300}{299}\bigr)>0$. If $\NN\geq \frac{\log C_1}{b}$, then
$C_1e^{-2b\NN}\leq e^{-b\NN}$. Thus
\eqref{eq:inner-decay-before-absorption} proves \eqref{eq:inner-decay} after
increasing $N_1$. The first half of the radial gap permits the comparison of $M$ with $L$, and the second half supplies the frequency integral.
\end{proof}

\begin{lemma}\label{lem:outer-growth}
Assume \eqref{eq:stable} and $\NN\geq1$. Then
\begin{equation}\label{eq:outer-growth}
 M(4)\leq e^{C_{n,S}\NN}M\bigl(\frac{3}{2}\bigr),
\end{equation}
where $C_{n, S}$ is a positive constant depending only on $n$ and $S$.
\end{lemma}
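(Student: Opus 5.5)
Under \eqref{eq:stable}, the plan is to chain the doubling indices of consecutive dyadic balls, compare the resulting growth with $\NN$, and convert the radius $\frac32$ into a radius controlled by $B_{\frac12}$ and $B_1$.

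First, recall $\NN(B_2)=\log_2\frac{M(4)}{M(2)}$ and $\NN(B_1)=\log_2\frac{M(2)}{M(1)}$, so
$$
\log_2\frac{M(4)}{M\bigl(\frac32\bigr)}\le \log_2\frac{M(4)}{M(1)}=\NN(B_2)+\NN(B_1),
$$
using $M\bigl(\frac32\bigr)\ge M(1)$.

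Next, bound $\NN(B_1)$ in terms of $\NN(B_2)$ through the monotonicity of the frequency. By \eqref{eq:poisson-comparison} at fixed radius ratios,
$$
\log\frac{M(2)}{M(1)}\le \log\frac{L\bigl(\frac{21}{10}\bigr)}{L(1)}+C_n=\int_1^{\frac{21}{10}}\frac{\beta(t)}{t}\dd t+C_n,
$$
using \eqref{eq:frequency-ratio}. Monotonicity \eqref{eq:frequency-monotonicity} gives $\beta(t)\le\beta\bigl(\frac{21}{10}\bigr)$ on this interval. In the other direction,
$$
\beta\bigl(\frac{21}{10}\bigr)\log\frac{4-\delta}{\frac{21}{10}}\le\int_{\frac{21}{10}}^{4-\delta}\frac{\beta(t)}{t}\dd t=\log\frac{L(4-\delta)}{L\bigl(\frac{21}{10}\bigr)}\le\log\frac{M(4)}{M(2)}+C_n.
$$
To justify the last inequality, use $L(4-\delta)\le M(4)$ and $M(2)\le C_nL\bigl(\frac{21}{10}\bigr)$, with a fixed $\delta$, say $\delta=1$. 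Hence $\NN(B_1)\le C_n\NN(B_2)+C_n$.

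Combining these two steps with the stable growth assumption gives
$$
\log_2\frac{M(4)}{M\bigl(\frac32\bigr)}\le C_n\bigl(S\NN+1\bigr)\le C_nS\,\NN+C_n.
$$
Since $\NN\ge1$, the additive constant is absorbed into $C_{n,S}\NN$, which proves \eqref{eq:outer-growth} with $C_{n,S}=C_n(S+1)\log2$.

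The only delicate point is the middle step. The comparison \eqref{eq:poisson-comparison} requires a strictly smaller inner radius, and $L$ is only defined for $r<4$, so the frequency integral must stop at a radius below $4$ that still has a fixed ratio. The choice $\delta=1$ works: it keeps $\log\frac{3}{2.1}>0$, and every comparison constant depends only on $n$. If one prefers, the elementary bound $M(2)\le M(4)$ also suffices for the crude estimate. More directly, apply convexity \eqref{eq:three-sphere} on the radii $1<2.1<3$ to get the ratio bound without tracking $\beta$.
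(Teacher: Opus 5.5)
Your proof is correct. It uses the same ingredient as the paper's, namely log-convexity of $L$ in $\log r$, which you apply in its frequency form \eqref{eq:frequency-monotonicity}, \eqref{eq:frequency-ratio}; only the bookkeeping differs.

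You pass to the smaller radius $1$, write $\log_2\frac{M(4)}{M(\frac32)}\le\NN(B_2)+\NN(B_1)$, and prove an almost-monotonicity of doubling indices at consecutive dyadic scales, $\NN(B_1)\le C_n\NN(B_2)+C_n$. You get this by bounding the inner ratio $\log\frac{L(21/10)}{L(1)}$ by $\beta(\frac{21}{10})\log\frac{21}{10}$, and then bounding $\beta(\frac{21}{10})$ by the outer ratio $\log\frac{L(3)}{L(21/10)}$ divided by $\log\frac{10}{7}$.

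The paper instead interpolates $M(2)$ directly between $M(\frac32)$ and $M(4)$ via \eqref{eq:three-sphere}, getting $M(2)\le C_nM(\frac32)^\theta M(4)^{1-\theta}$. It then absorbs $M(4)^{1-\theta}$ into the left side of $M(4)\le 2^{S\NN}M(2)$. That absorption divides by $M(4)^{1-\theta}$, so it needs $M(4)<\infty$, which is automatic under \eqref{eq:stable}.

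Your additive argument never divides by $M(4)$, and as a by-product it gives the bound with $M(1)$ in place of $M(\frac32)$. The paper's version, in turn, is a single interpolation with no frequency tracking, essentially the variant you mention at the end with radii $1<\frac{21}{10}<3$. In both arguments $\NN\geq1$ is used only to absorb the additive constant coming from \eqref{eq:poisson-comparison}.

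Your aside that $M(2)\le M(4)$ ``suffices for the crude estimate'' plays no role and can be dropped.
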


\begin{proof}
By \eqref{eq:stable},
\begin{equation}\label{eq:stable-ratio}
    \frac{M(4)}{M(2)}\le 2^{S\NN}.
\end{equation}
The missing comparison in \eqref{eq:stable-ratio} is from radius $\frac{3}{2}$ to
radius $2$. It follows by interpolation rather than by replacing $M(2)$
by the smaller quantity $M(\frac{3}{2})$. Define
\begin{equation*}\label{eq:theta}
 \theta=\frac{\log(\frac{14}{9})}{\log(\frac{7}{3})}\in(0,1), 
 \text{ equivalently, }\log\frac94=\theta\log\frac32+(1-\theta)\log\frac72.
\end{equation*}
Using \eqref{eq:three-sphere}, \eqref{eq:poisson-comparison}, and
$L(r)\leq M(r)$, we obtain
\begin{equation}\label{eq:outer-interpolation}
 M(2)\leq C_nL\bigl(\frac{9}{4}\bigr)
 \leq C_nL\bigl(\frac{3}{2}\bigr)^\theta L\bigl(\frac{7}{2}\bigr)^{1-\theta}
 \leq C_nM\bigl(\frac{3}{2}\bigr)^\theta M(4)^{1-\theta}.
\end{equation}
Combining \eqref{eq:stable-ratio} with \eqref{eq:outer-interpolation}, we have
\begin{equation}\label{eq:outer-absorption}
 M(4)\leq C_n2^{S\NN}M\bigl(\frac{3}{2}\bigr)^\theta M(4)^{1-\theta},\qquad
 \left(\frac{M(4)}{M\bigl(\frac{3}{2}\bigr)}\right)^\theta\leq C_n2^{S\NN}.
\end{equation}
All suprema in \eqref{eq:outer-absorption} are finite and positive. It further gives
\begin{equation*}\label{eq:outer-explicit}
 \frac{M(4)}{M\bigl(\frac{3}{2}\bigr)}
 \leq\exp\left(\frac{\log C_n}{\theta}
              +\frac{S\log2}{\theta}\NN\right)
 \leq e^{C_{n,S}\NN},
\end{equation*}
where the last inequality uses $\NN\geq1$. This proves
\eqref{eq:outer-growth}.
\end{proof}

\begin{lemma}\label{lem:directions}
Assume \eqref{eq:stable} and normalize $M\bigl(\frac{3}{2}\bigr)=1$. For every fixed
$\eps>0$, there are $\delta>0$ and $N_2$, depending only on
$n,S,\eps$, such that for $\NN\geq N_2$,
\begin{equation}\label{eq:good-directions}
\sigma(E)\geq\delta,
\end{equation}
where 
$$ E=\{\xi\in\Sph^{n-1}:\bigl|u\bigl(\frac{3}{2}\xi\bigr)\bigr|\geq e^{-\eps \NN}\}.$$
For every $\xi\in\Sph^{n-1}$, the restriction $r\mapsto u(r\xi)$ extends to a
holomorphic function $f_\xi$ in $\D_3$, which is real-valued on $(-3,3)$, with
\begin{equation}\label{eq:radial-hol-growth}
 \sup_{|z|\leq\frac52}|f_\xi(z)|\leq e^{C_{n,S}\NN}
\end{equation}
for $\NN$ above a dimensional and $S$-dependent threshold. The constant in
\eqref{eq:radial-hol-growth} is independent of $\xi$.
\end{lemma}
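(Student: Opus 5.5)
Two separate things must be shown: the measure lower bound for $E$, and the holomorphic extension with the growth bound. Both are quantitative consequences of the normalization $M(\frac32)=1$ together with Lemma~\ref{lem:outer-growth}.

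\emph{Holomorphic extension.} Lemma~\ref{lem:outer-growth} gives $M(4)\le e^{C_{n,S}\NN}$ once $\NN\ge1$. Write $u=\sum_k P_k$. By Cauchy estimates for the homogeneous expansion, $\sup_{\Sph^{n-1}}|P_k|\le C_n k^{n-2}\,\rho^{-k}M(\rho)$ for every $\rho<4$; one way to see this is through $a_k\rho^{2k}\le L(\rho)^2\le M(\rho)^2$ combined with the standard $L^2$--$L^\infty$ bound $\|P_k\|_\infty\le C_n k^{(n-2)/2}\|P_k\|_{L^2(d\mu)}$. Fix $\xi$. The series $f_\xi(z)=\sum_k P_k(\xi)z^k$ converges for $|z|<\rho$, it agrees with $u(r\xi)$ on the real segment, and it is real there. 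Taking $\rho=\frac72$, for $|z|\le\frac52$ we get
$$
|f_\xi(z)|\le C_n M(\tfrac72)\sum_k k^{n-2}(5/7)^k\le C_n e^{C_{n,S}\NN}.
$$
This bound is uniform in $\xi$, and the dimensional constant is absorbed into the exponent for $\NN$ above a threshold. Letting $\rho\uparrow4$ gives holomorphy in $\D_3$ as well. This proves \eqref{eq:radial-hol-growth}.

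\emph{Measure of $E$.} Since $M(\frac32)=1$, \eqref{eq:poisson-comparison} together with \eqref{eq:frequency-ratio} shows that the spherical $L^2$ mean at radius $\frac32$ is not too small. Indeed, $1=M(\frac32)\le C_nL(\frac{151}{100})$. Monotonicity of $\beta$ and the growth bound give $\beta(r)\le C_{n,S}\NN$ on $[\frac32,\frac72]$, since $\log(L(\frac72)/L(\frac32))\le C_{n,S}\NN$ and $L(\frac32)\ge c\,e^{-?}$; more simply, $\log(L(\frac{151}{100})/L(\frac32))\le \beta(\frac{151}{100})\log\frac{151}{150}$, and $\beta(\frac{151}{100})\le C\log(L(\frac{7}{2})/L(\frac{151}{100}))\le C_{n,S}\NN$. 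From this,
$$
L(\tfrac32)\ge c_n e^{-C'_{n,S}\NN}.
$$
This is the weak point: it only gives $L(\frac32)\ge e^{-C'\NN}$, not $\ge e^{-\eps\NN}$ for arbitrary $\eps$. The main obstacle is therefore to show $L(\frac32)\ge c_n$, a lower bound with no exponential loss. To get it, I will use the Poisson comparison \eqref{eq:poisson-comparison} in the \emph{opposite} direction through the interpolation \eqref{eq:three-sphere}: $1=M(\frac32)\le C_nL(r)$ for $r$ slightly larger than $\frac32$, and log-convexity between $\frac32$ and $4$ gives $L(r)\le L(\frac32)^{\theta}L(\frac72)^{1-\theta}$ with $1-\theta\approx (r-\frac32)/\frac32$. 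Choosing $r-\frac32=\eta$ small, depending on $\eps$, yields
$$
L(\tfrac32)\ge c_{n,\eta}\,e^{-C_{n,S}\eta\NN/\theta}\ge e^{-\eps\NN/2},
$$
once $\eta\ll\eps/C_{n,S}$ and $\NN$ is large. Here $c_{n,\eta}$ blows up as $\eta\to0$, but only polynomially, so it is absorbed by the large threshold on $\NN$.

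Finally, I split the integral $L(\frac32)^2=\int u(\frac32\xi)^2\,d\mu$ over $E$ and its complement. The complement contributes at most $e^{-2\eps\NN}$, and the integral over $E$ is at most $\mu(E)\,M(\frac32)^2=\mu(E)$. Hence
$$
\mu(E)\ge e^{-\eps\NN}-e^{-2\eps\NN}.
$$
This lower bound decays as $\NN$ grows, so it still does not give a $\delta$ independent of $\NN$; uniformity needs a further argument. For that I will use the Remez-type/propagation-of-smallness argument on the sphere. If $\sigma(E)<\delta$, then $|u|<e^{-\eps\NN}$ on most of the sphere of radius $\frac32$, while $\sup_{B_4}|u|\le e^{C\NN}$. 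A propagation-of-smallness estimate for harmonic functions from sets of positive measure, for instance via the Remez inequality for the polynomial truncation of degree $\sim C\NN$, gives $\sup_{\partial B_{3/2}}|u|\le (C/\delta)^{C\NN}e^{-\eps\NN}\cdot$(tail). Choosing $\delta$ small relative to $\eps$ makes this $<1$, contradicting $M(\frac32)=1$. I expect this last step, obtaining a $\delta$ independent of $\NN$, to be the main obstacle.
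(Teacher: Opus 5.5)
Your radial holomorphic extension and the bound \eqref{eq:radial-hol-growth} are correct and match the paper's Step~1. The paper takes $\rho=3$ and the reproducing-kernel bound $|P_k(\xi)|\le C_n(1+k)^{\frac{n-2}{2}}3^{-k}M(3)$; you should write $(1+k)$ rather than $k$ so that $k=0$ is covered.

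The gap is in the step you flag yourself: the $\NN$-independent lower bound for $\sigma(E)$. Your $L^2$ estimates are correct, but they only give $\mu(E)\gtrsim e^{-\eps\NN}$ and are not needed. The closing estimate $\sup_{\partial B_{3/2}}|u|\le (C/\delta)^{C\NN}e^{-\eps\NN}$ cannot be made $<1$ by shrinking $\delta$, because the factor $(C/\delta)^{C\NN}$ \emph{increases} as $\delta\to0$. More fundamentally, on the sphere of radius $\frac32$ the set where $|u|<e^{-\eps\NN}$ has nearly full measure. Any propagation-of-smallness bound with loss $e^{Cm}$, where $m\sim\Lambda\NN$ and $C$ does not tend to zero with $\delta$, is defeated by the gain $e^{-\eps\NN}$ once $\eps$ is small. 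The lemma must hold for every $\eps>0$, and it is applied with $\eps=\frac b8$ for a small dimensional constant $b$.

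The missing ingredient is a Remez-type inequality whose exponential loss per unit degree tends to zero with the measure of the exceptional set. The paper uses Erd\'elyi's trigonometric form: if a real trigonometric polynomial $\mathcal T$ of degree at most $m$ satisfies $|\mathcal T|\le t$ off a set of measure $s\le\frac12$, then $\|\mathcal T\|_\infty\le te^{C_1ms}$ with $C_1$ absolute. The paper transfers this to $\Sph^{n-1}$ in four steps.
\begin{enumerate}
\item Truncate to $p_m=\sum_{k\le m}(\frac32)^kP_k$ with $m=\lceil\Lambda\NN\rceil$ and $\Lambda=\Lambda(n,S,\eps)$ large, so that $\|u(\frac32\,\cdot)-p_m\|_\infty\le e^{-2\eps\NN}$ and $\|p_m\|_\infty\ge\frac12$.
\item Choose $\xi_0$ with $|p_m(\xi_0)|=\|p_m\|_\infty$ and restrict $p_m$ to each great circle through $\xi_0$. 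Each restriction is a trigonometric polynomial of degree at most $m$ with sup norm at least $\frac12$.
\item Choose $s$ with $C_1(\Lambda+1)s<\frac\eps2$. The Remez bound then forces $\{|p_m|\ge2e^{-\eps\NN}\}$ to have length at least $s$ on every such circle.
\item Integrate in polar coordinates about $\xi_0$, removing neighbourhoods of $\pm\xi_0$ to control the weight $|\sin t|^{n-2}$. This gives measure at least $c_ns^{n-1}$, and the set lies in $E$ by the approximation in step~1.
\end{enumerate}
The order of choices ($\Lambda$, then $s$, then $\delta=c_ns^{n-1}$, then the threshold $N_2$) is what makes $\delta$ independent of $\NN$. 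Your sketch has no mechanism that plays this role.
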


\begin{proof}
The argument combines the classical spherical harmonic expansion and
reproducing kernel \cite[Chapter 5]{ABR} with the trigonometric Remez type
inequality \cite[Theorem 2]{Erdelyi1992Remez}.

{\it Step 1. Coefficient bounds and radial holomorphic extension.}
Let $d_k$ be the dimension of the space of spherical harmonic functions of degree $k$ on $\Sph^{n-1}$. One has
$$d_k\leq C_n(1+k)^{n-2}.$$ For a real orthonormal basis
$Y_1,\ldots,Y_{d_k}$ in $L^2(d\mu)$, the diagonal sum
$\sum_jY_j(\xi)^2$ is independent of the basis and invariant under rotations. It is constant on the sphere, and its integral is
\begin{equation*}\label{eq:kernel-diagonal}
 \int_{\Sph^{n-1}}\sum_{j=1}^{d_k}Y_j(\xi)^2\dd\mu(\xi)
 =\sum_{j=1}^{d_k}\|Y_j\|_2^2=d_k.
\end{equation*}
So, $\sum_{j=1}^{d_k}Y_j(\xi)^2=d_k.$ The Cauchy--Schwarz inequality now gives, for every spherical harmonic function $Q=\sum_{j=1}^{d_k}c_jY_j$, 
\begin{equation}\label{eq:point-evaluation}
 |Q(\xi)|^2=\left|\sum_jc_jY_j(\xi)\right|^2
 \leq\left(\sum_jc_j^2\right)\left(\sum_jY_j(\xi)^2\right)=d_k\|Q\|_2^2.
\end{equation}
Recall $u=\sum_{k\geq0}P_k$ is its expansion in homogeneous harmonic polynomials. On the unit sphere, the function $3^kP_k(\cdot)$ is the orthogonal projection of $u(3\cdot)$
onto the space of spherical harmonic functions of degree $k$ in $L^2(d\mu)$. Thus,
$$ \|3^kP_k\|_{L^2(d\mu)}\leq L(3)\leq M(3),$$ and by \eqref{eq:point-evaluation},
\begin{equation}\label{eq:Pk-bound}
 |P_k(\xi)|\leq C_n(1+k)^{\frac{n-2}{2}}3^{-k}M(3)
 \qquad\text{ for any } \xi\in\Sph^{n-1}.
\end{equation}
Define
\begin{equation*}\label{eq:radial-series}
 f_\xi(z)=\sum_{k\geq0}P_k(\xi)z^k.
\end{equation*}
By \eqref{eq:Pk-bound}, it converges
locally uniformly in $\D_3$. The coefficients are real, and homogeneity gives
$f_\xi(r)=\sum_kP_k(r\xi)=u(r\xi)$ for $-3<r<3$. Moreover,
\begin{equation*}\label{eq:radial-series-bound}
 \sup_{|z|\leq\frac{5}{2}}|f_\xi(z)|
 \leq C_nM(3)\sum_{k\geq0}(1+k)^{\frac{n-2}{2}}\bigl(\frac{5}{6}\bigr)^k
 \leq C_nM(3)\leq e^{C_{n,S}\NN},
\end{equation*}
where the last step uses \eqref{eq:outer-growth} and $M\bigl(\frac{3}{2}\bigr)=1$.
This proves \eqref{eq:radial-hol-growth}. Only the radial variable is
complexified; no several-variable holomorphic extension is needed.

{\it Step 2. Polynomial approximation with degree proportional to $\NN$.}
Choose $m=\lceil\Lambda \NN\rceil$, where $\Lambda\geq1$ will be fixed in terms of $n,S,\eps$, and set
\begin{equation*}\label{eq:spherical-truncation}
 p_m(\xi)=\sum_{k=0}^m\bigl(\frac{3}{2}\bigr)^kP_k(\xi).
\end{equation*}
Equations \eqref{eq:Pk-bound} and \eqref{eq:outer-growth} give
\begin{align*}
 \|u\bigl(\frac{3}{2}\,\cdot\bigr)-p_m\|_\infty
 &\leq C_nM(3)\sum_{k>m}(1+k)^{\frac{n-2}{2}}2^{-k}\notag\\
 &\leq C_ne^{C_{n,S}\NN}(m+1)^{\frac{n-2}{2}}2^{-m}.
 \label{eq:truncation-tail}
\end{align*}
Choose $\Lambda=\Lambda(n, S, \eps)$ sufficiently large.
After increasing the threshold for $\NN$, the polynomial factor can be absorbed in the exponential, and we obtain
\begin{equation}\label{eq:approximation}
 \|u\bigl(\frac{3}{2}\,\cdot\bigr)-p_m\|_\infty\leq e^{-2\eps \NN},
 \qquad \|p_m\|_\infty\geq1-e^{-2\eps \NN}\geq\frac12.
\end{equation}
The equality $\|u\bigl(\frac{3}{2}\,\cdot\bigr)\|_\infty=1$ follows from the maximum principle
and the normalization. 

{\it Step 3. A Remez type estimate on great circles through a maximum.}
We use the following Remez type estimate for trigonometric polynomials; see \cite[Theorem 2]{Erdelyi1992Remez}.
There is an absolute constant $C_{\mathrm 1}\geq1$ such that, whenever a
real-valued trigonometric polynomial $\mathcal T$ of degree at most $m$ satisfies
$|\mathcal T|\leq t$ on a measurable set $I\subset[0,2\pi)$ with
$|I|\geq2\pi-s$, where $t>0$ and $0<s\leq\frac12$, one has
\begin{equation}\label{eq:circle-propagation}
 \|\mathcal T\|_\infty\leq te^{C_1ms}.
\end{equation}
This follows by applying the cited theorem to $t^{-1}\mathcal T$, and its range $0<s\leq\frac\pi2$ includes the one used here. We only need
\eqref{eq:circle-propagation} with an absolute constant; for the sharp trigonometric form, see \cite[Theorem 1.1]{TY}. The dependence on the exceptional length $s$ in the exponent
is important, since a fixed loss $e^{Cm}$ would not suffice for arbitrarily
small $\eps$.

Choose $\xi_0\in\Sph^{n-1}$ with $|p_m(\xi_0)|=\|p_m\|_\infty$.
For a unit vector $\nu\perp \xi_0$, put
\begin{equation*}\label{eq:great-circle}
 \Gamma_\nu(t)=(\cos t)\xi_0+(\sin t)\nu,
 \qquad Q_\nu(t)=p_m(\Gamma_\nu(t)),\qquad 0\leq t<2\pi.
\end{equation*}
Substitution of linear combinations of $\cos t$ and $\sin t$ into a
polynomial of degree $m$ shows that $Q_\nu$ is a real-valued trigonometric polynomial of
degree at most $m$. Since $\Gamma_\nu(0)=\xi_0$,
\begin{equation}\label{eq:circle-maximum}
 \|Q_\nu\|_\infty=\|p_m\|_\infty\geq\frac12
 \quad\text{for every }\nu.
\end{equation}
Choose $s\in(0,\frac12)$ so small that $C_1(\Lambda+1)s<\frac{\eps}{2}$ and define
\begin{equation}\label{eq:circle-large-set}
 G_\nu=\{t\in[0,2\pi):|Q_\nu(t)|\geq2e^{-\eps \NN}\}.
\end{equation}
If $|G_\nu|<s$, apply \eqref{eq:circle-propagation} to its complement. Since
$m\leq(\Lambda+1)\NN$, equations \eqref{eq:circle-maximum} and
\eqref{eq:circle-large-set} imply
\begin{equation*}\label{eq:remez-contradiction}
 \frac12\leq\|Q_\nu\|_\infty
 \leq2e^{-\eps \NN+C_1ms}
 \leq2e^{-\frac{\eps \NN}{2}},
\end{equation*}
which is impossible for large $\NN$. Hence
\begin{equation}\label{eq:circle-length}
 |G_\nu|\geq s\qquad\text{for every }\nu\in\Sph^{n-2}\subset \xi_0^\perp.
\end{equation}

{\it Step 4. From circle length to spherical area.}
For any measurable $G\subset[0,2\pi)$ with $|G|\geq s$, remove
\begin{equation}\label{eq:polar-degeneracy}
 U=[0,\frac s8)\cup(\pi-\frac s8,\pi+\frac s8)
       \cup(2\pi-\frac s8,2\pi),\qquad |U|=\frac{s}{2}.
\end{equation}
Then $|G\setminus U|\geq \frac s2$, and
$|\sin t|\geq \frac{s}{4\pi}$ on $G\setminus U$. Consequently,
\begin{equation}\label{eq:weighted-circle}
 \int_G|\sin t|^{n-2}\dd t
 \geq\frac s2\left(\frac{s}{4\pi}\right)^{n-2}
 \geq c_ns^{n-1}.
\end{equation}
Define
\begin{equation}\label{eq:poly-large-set}
 E_m=\{\xi\in\Sph^{n-1}:|p_m(\xi)|\geq2e^{-\eps \NN}\}.
\end{equation}
The domain $0<t<\pi$ covers the sphere except its poles
once, whereas $0<t<2\pi$ covers it twice, because
\begin{equation}\label{eq:double-cover}
 \Gamma_\nu(t)=\Gamma_{-\nu}(2\pi-t).
\end{equation}
Equations \eqref{eq:circle-length}--\eqref{eq:double-cover} therefore give
\begin{equation}\label{eq:spherical-area}
 \begin{split}
 2\sigma(E_m)
 &=\int_{\Sph^{n-2}}\int_0^{2\pi}
       \ind_{E_m}(\Gamma_\nu(t))|\sin t|^{n-2}\dd t\dd\sigma(\nu)\\
 &=\int_{\Sph^{n-2}}\int_{G_\nu}|\sin t|^{n-2}\dd t\dd\sigma(\nu)
 \geq c_ns^{n-1}.
 \end{split}
\end{equation}
Here both surface measures are the usual, unnormalized measures, and the
fixed area of $\Sph^{n-2}$ is included in $c_n$. No antipodal symmetry of
$E_m$ is needed for \eqref{eq:double-cover}.
Finally, by \eqref{eq:approximation}, we obtain
\begin{equation*}\label{eq:large-set-transfer}
 \bigl|u\bigl(\frac{3}{2} \xi\bigr)\bigr|\geq|p_m(\xi)|-e^{-2\eps \NN}
 \geq2e^{-\eps \NN}-e^{-2\eps \NN}
 \geq e^{-\eps \NN}\qquad\text{ for any } \xi\in E_m.
\end{equation*}
Thus $E_m\subset E$ and \eqref{eq:spherical-area} proves
\eqref{eq:good-directions}. The choices are made in the order
$\Lambda$, $s$ and $\delta=c_ns^{n-1}$, and then the threshold for $\NN$.
In particular, $\delta$ is independent of $u$, $\xi$, and $\NN$. This completes the proof.
\end{proof}
 Now we are ready to prove Proposition \ref{prop:stable}.
\begin{proof}[Proof of Proposition \ref{prop:stable}]
It suffices first to treat large $\NN$. Normalize $M\bigl(\frac{3}{2}\bigr)=1$, and use
Lemma \ref{lem:directions} with $\eps=\frac b8$, where $b$ is from Lemma \ref{lem:inner-decay}. For each $\xi\in E$, define
\begin{equation*}\label{eq:shifted-radial}
 \phi_\xi(\zeta)=f_\xi\left(\frac32+\frac\zeta4\right).
\end{equation*}
Note
$\bigl|\frac{3}{2}+\frac{\zeta}{4}\bigr|<\frac{5}{2}$ on $\D_2$. By Lemma \ref{lem:directions}, $\phi_\xi$ is holomorphic in $\D_2$ and real-valued on $(-2,2)$. Moreover, \eqref{eq:radial-hol-growth} gives
\begin{equation}\label{eq:shifted-growth}
 \sup_{\D_2}|\phi_\xi|\leq e^{C_{n,S}\NN},\quad
 |\phi_\xi(0)|\geq e^{-\frac{b\NN}{8}}.
\end{equation}
Use the fixed intervals $(-\frac{2}{25}, \frac{2}{25})$ and $[-\frac{7}{100},-\frac{3}{50}]$.
The latter is compactly contained in the former with the origin removed.
For $\zeta$ in the latter interval,
$$ \frac32+\frac\zeta4
 \in\left[\frac{593}{400},\frac{297}{200}\right]
 \subset\left(0,\frac{149}{100}\right).$$ Thus, \eqref{eq:inner-decay} gives
\begin{equation}\label{eq:shifted-inner-interval}
 \sup_{\bigl[-\frac{7}{100},-\frac{3}{50}\bigr]}|\phi_\xi|\leq e^{-b\NN}.
\end{equation} Since $\frac{2}{25}<\frac{1}{4}$ and $\frac{b}{8}<b$,
Corollary \ref{cor:separation}, with $T=\NN$ and $q=\frac 12$, applies to
\eqref{eq:shifted-growth} and \eqref{eq:shifted-inner-interval}. All its
parameters are fixed independently of $\xi,\NN$. It yields
\begin{equation}\label{eq:radial-corollary}
 \int_{-\frac{2}{25}}^{\frac{2}{25}}
 \left|\frac{\phi'_\xi(\zeta)}{\NN\phi_\xi(\zeta)}\right|^{1/2}\dd\zeta
 \geq\kappa_{n,S}>0.
\end{equation}
Since
\begin{equation*}\label{eq:radial-change}
 \frac{\phi'_\xi(\zeta)}{\phi_\xi(\zeta)}
 =\frac14\frac{\partial}{\partial r}\log|u(r\xi)|,
\end{equation*}
\eqref{eq:radial-corollary} implies
\begin{equation}\label{eq:radial-lower}
\int_{\frac{37}{25}}^{\frac{38}{25}}\bigl|\partial_r\log|u(r\xi)|\bigr|^{\frac 12}\,d r
 \geq\frac{\kappa_{n,S}}2\NN^{\frac 12}\qquad\text{ for any }\xi\in E.
\end{equation}
The radial function is not identically zero because $\phi_\xi(0)\ne0$ and \eqref{eq:radial-lower} is a well-defined integral despite the zeros since $0<q<1$.

Let $g=|\nabla\log|u||$ outside $\{u=0\}$, extended by zero on $\{u=0\}$.
Since $[\frac{37}{25},\frac{38}{25}]\subset(0,\frac 85)$, \eqref{eq:radial-lower} implies
\begin{align*}
 \int_{B_{\frac 8 5}}g^{\frac1 2}\,d x
 &\geq\int_E\int_{\frac{37}{25}}^{\frac{38}{25}}g(r\xi)^{\frac 1 2}r^{n-1}\,d r\dd\sigma(\xi)
 \notag\\
 &\geq\left(\frac{37}{25}\right)^{n-1}
       \frac{\kappa_{n,S}}2\NN^{\frac12}\sigma(E)
 \geq c_{n,S}\NN^{\frac1 2}.\label{eq:gradient-lower}
\end{align*}
The positive lower bound on $\sigma(E)$ is indispensable, since one radial restriction would have zero angular measure.

Proposition~\ref{prop:gradient-upper} at exponent $\frac12$ now gives
\begin{equation}\label{eq:stable-comparison}
 c_{n,S}\NN^{\frac{1}{2}}
 \leq\int_{B_{\frac 85}}g^{\frac12}\dd x
 \leq C_n\bigl(1+\HH^{n-1}(\{u=0\}\cap B_2)\bigr)^{\frac12}.
\end{equation}
Squaring \eqref{eq:stable-comparison} proves
\eqref{eq:stable-conclusion} for large $\NN$. Below the fixed threshold,
note $\NN\leq N_*(n,S)(1+\HH^{n-1}(\{u=0\}\cap B_2))$. Then we can increase $C_{n,S}$ to get the desired estimate.
\end{proof}

\section{Proof of Theorem \ref{thm:main}}\label{sec:main-proof}
We use the stable ball construction of Logunov, Priya and Sartori \cite[Lemma 6.5]{LPS}. The following version records the inner index lower bound needed to apply Proposition~\ref{prop:stable}. It is a direct consequence of their stated conclusions, not an additional hypothesis.

\begin{lemma}[Stable ball lemma, {\cite[Lemma 6.5]{LPS}}]\label{lem:stable-ball}
There are dimensional constants $L_n>0$, $S_n\geq1$, and $c_n>0$ with the
following property. Let $u$ be a non-zero harmonic function in $B_4$. Suppose $0<w<\frac{1}{20}$, $\rho>0$, and
\begin{equation*}\label{eq:shell-assumption}
 \mathcal S_{\rho,w}=\{x:\rho-10w\leq|x|\leq\rho+10w\}
 \subset B_2\setminus B_1.
\end{equation*}
Assume, for some $v\geq1$, that
\begin{equation*}\label{eq:shell-frequency}
 v\leq\beta(t)\leq10v
 \qquad\text{ for } t\in [\rho-10w, \rho+10w].
\end{equation*}
If
\begin{equation}\label{eq:width-condition}
 \frac{v w}{\log(\frac 1 w)}\geq L_n,
\end{equation}
then there is a ball $D$ of radius $w$ such that
$\overline{4D}\subset\mathcal S_{\rho,w}$ and
\begin{equation}\label{eq:stable-ball-indices}
 \NN\bigl(\frac D2\bigr)\geq c_nv w,\qquad
 \NN(2D)\leq S_n\NN\bigl(\frac D 2\bigr).
\end{equation}
No condition is imposed on $u$ at the center of $D$.
\end{lemma}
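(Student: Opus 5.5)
This is essentially a restatement of \cite[Lemma 6.5]{LPS}, so the plan is to reduce to that lemma and translate its conclusions into the normalization used here. I will not re-derive the construction.

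\emph{Step 1: check the hypotheses.} In \cite{LPS} the hypotheses are a shell on which the Almgren frequency $\beta$ (their growth exponent, comparable to the doubling index up to fixed radial ratios) is pinched between $v$ and $10v$, together with a width condition of the form \eqref{eq:width-condition}. Since the shell lies in $B_2\setminus B_1$, every radius in it is comparable to $1$. By \eqref{eq:frequency-ratio} and \eqref{eq:poisson-comparison}, $\log L$ then has slope in $[v,10v]$ with respect to $\log r$ across the shell. This is the input their construction uses, so the hypotheses transfer after possibly enlarging $L_n$ to absorb the comparison constants between $M$ and $L$.

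\emph{Step 2: the conclusion from \cite{LPS}.} Their construction covers a sphere of radius $\rho$ by balls of radius $w$ and uses the pinched frequency to show the following. On a fixed proportion of these balls, the doubling index at scale $w$ is comparable to $vw$ from both sides. The lower bound comes from the growth $e^{vw}$ of $L$ across a radial distance $w$ at radius $\sim 1$. The upper bound on $2D$ comes from the quantitative almost-additivity of doubling indices, which controls the index of subballs by the index of the shell. The role of $vw\ge L_n\log(1/w)$ is to make the losses, which are of order $\log(1/w)$ from comparing sup norms over $\sim w^{1-n}$ subballs, negligible against $vw$.

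\emph{Step 3: derive \eqref{eq:stable-ball-indices}.} LPS state two bounds: $\NN(2D)\le C vw$ and $\NN(\tfrac D2)\ge c vw$ (or bounds at comparable radii, converted to these by monotonicity of the doubling index up to constants). Dividing them gives $\NN(2D)\le S_n\NN(\tfrac D2)$ with $S_n=C/c$. The containment $\overline{4D}\subset\mathcal S_{\rho,w}$ holds because the centers lie on $\partial B_\rho$ and $4w<10w$.

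\emph{Main obstacle.} The delicate point is matching the definitions: LPS may phrase the frequency hypothesis through doubling indices of shells rather than through $\beta$, and their radii conventions may differ from $\tfrac D2$ versus $2D$. I would handle this with the log-convexity \eqref{eq:three-sphere} and the $M$–$L$ comparison, at the cost of dimensional constants only.
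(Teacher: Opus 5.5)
Your route is the paper's: the lemma is \cite[Lemma 6.5]{LPS} with their frequency parameter $N$ renamed $v$, the largeness condition absorbed into $L_n$, and some constant bookkeeping. There is, however, a real, if easily repaired, flaw in Step 3. You build the bookkeeping on a form of the LPS conclusion that LPS do not state.

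The paper's proof records their conclusion as $4D\subset\mathcal S_{\rho,w}$ together with
\[
 c\,vw\leq\NN(2D)\leq C\,\NN\bigl(\tfrac D2\bigr).
\]
This gives neither an upper bound $\NN(2D)\le Cvw$ nor a direct lower bound on $\NN(\frac D2)$. Your step of dividing $\NN(2D)\le Cvw$ by $\NN(\frac D2)\ge cvw$ therefore has no cited premise behind it.

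Your fallback does not rescue it. Monotonicity of the doubling index bounds the index of a smaller concentric ball from above by that of a larger one. It can never push the lower bound $cvw$ from $2D$ down to $\frac D2$.

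The upper bound $\NN(2D)\lesssim vw$ is also not something you may take for granted for a ball of radius $w$ in the shell. A zero of order comparable to $v$ on the sphere $\partial B_\rho$ forces index of order $v$, not $vw$, at every scale. So that bound needs a careful choice of $D$, which your citation does not supply. Your Step 2 sketch (a fixed proportion of balls, controlled by almost additivity) is also not how LPS argue: the paper notes their proof uses the radial growth estimates of \cite[Lemma 6.7]{LPS} and a maximum point construction. Since you do not rely on that sketch, this part is harmless.

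The correct translation takes one line and uses the stable-growth half of the LPS conclusion to produce the inner lower bound. From the displayed chain,
\[
 \NN\bigl(\tfrac D2\bigr)\geq C^{-1}\NN(2D)\geq \tfrac{c}{C}\,vw,
\]
so $c_n=c/C$. Since $\NN(\frac D2)\ge 0$, you also get $\NN(2D)\le C\NN(\frac D2)\le\max\{1,C\}\NN(\frac D2)$, so $S_n=\max\{1,C\}$.

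The inclusion $\overline{4D}\subset\mathcal S_{\rho,w}$ follows from $4D\subset\mathcal S_{\rho,w}$ because the shell is closed. You need no assumption about where LPS place the center.

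Likewise, the $M$--$L$ comparisons in your Step 1 are unnecessary. The LPS lemma is already phrased in terms of the frequency $\beta$ on the shell, so its hypotheses match verbatim once $N=v$ and $L_n$ is taken large.
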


\begin{proof}
The frequency parameter denoted by $N$ in \cite[Lemma 6.5]{LPS} is
$v$ here. Its dimensional largeness condition is
\eqref{eq:width-condition} after $L_n$ is fixed sufficiently large. Its
conclusions include $4D\subset\mathcal S_{\rho,w}$ and
\begin{equation}\label{eq:stable-ball-cited}
 cv w\leq\NN(2D)\leq C\NN\bigl(\frac D2\bigr),
\end{equation}
where $c$ and $C$ are positive constants depending only on $n$. Inequality \eqref{eq:stable-ball-cited}
implies \eqref{eq:stable-ball-indices} with $c_n=\frac cC$ and
$S_n=\max\{1,C\}$. The shell is closed, so the inclusion also holds for $\overline{4D}$. This imposes no vanishing condition at the selected center.
\end{proof}

Theorem \ref{thm:main} is a direct consequence of the following theorem.
\begin{theorem}\label{thm:main-2}
Let $n\geq3$. There is a constant $C_n>0$, depending only on $n$, such that
every non-zero harmonic function $u$ in the open ball $B_4$ satisfies
\begin{equation}\label{eq:main-additive}
 \NN\leq C_n\bigl(1+\HH^{n-1}\bigl(\{u=0\}\cap B_2\bigr)\bigr).
\end{equation}
If, in addition, $u(0)=0$, then
\begin{equation}\label{eq:main-linear}
 \HH^{n-1}\bigl(\{u=0\}\cap B_2\bigr)\geq c_n\NN
\end{equation}
for a constant $c_n>0$ depending only on $n$.
\end{theorem}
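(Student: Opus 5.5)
The plan is to reduce to Proposition~\ref{prop:stable} via the stable ball construction, following the frequency layer decomposition of \cite[Section 7]{LPS}. The point is that Proposition~\ref{prop:stable} is linear (no $\eps$-loss), so the summation should lose no power of $\NN$.

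First, the second assertion \eqref{eq:main-linear} follows at once from \eqref{eq:main-additive}. Indeed, if $u(0)=0$, Theorem~\ref{logunovlowerbound} with $r=2$ gives $\HH^{n-1}(\{u=0\}\cap B_2)\geq c_n2^{n-1}$. Hence $1+\HH^{n-1}(\{u=0\}\cap B_2)\le C_n\HH^{n-1}(\{u=0\}\cap B_2)$, and the additive constant is absorbed.

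For \eqref{eq:main-additive}, we may assume $\NN\geq N_0(n)$ large, since otherwise the bound is trivial. Lemma~\ref{lem:inner-decay} and the monotonicity \eqref{eq:frequency-monotonicity} give $\beta(t)\geq c_n\NN=:v_0$ for $t\in[\frac{21}{20},2)$. If growth were stable, i.e.\ $\NN(B_2)\le S_n\NN$, Proposition~\ref{prop:stable} would finish the proof. In general $\beta$ may become huge near radius $2$, and the plan is to split the radial interval $[\frac{11}{10},\frac{19}{10}]$ into frequency layers
\[
A_k=\{t:\ 10^k v_0\le\beta(t)<10^{k+1}v_0\},\qquad k\ge0 .
\]
By monotonicity of $\beta$ these are consecutive intervals, of lengths $\ell_k$ with $\sum_k\ell_k=\frac45$. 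Write $v_k=10^kv_0$. In each layer of length $\ell_k$, choose a width $w_k$ with $v_kw_k/\log(1/w_k)\asymp L_n$, so $w_k\asymp \log v_k/v_k$. We discard layers with $\ell_k<40w_k$ (their total length is tiny, since $\sum_k\log v_k/v_k\lesssim \log v_0/v_0$). In the remaining layers, pack $\asymp \ell_k/w_k$ disjoint shells $\mathcal S_{\rho,w_k}$.

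On each shell, Lemma~\ref{lem:stable-ball} gives a ball $D$ of radius $w_k$ with $\NN(\frac D2)\ge c_nv_kw_k$ and $\NN(2D)\le S_n\NN(\frac D2)$. Rescaling $D/2\mapsto B_{1/2}$ and applying Proposition~\ref{prop:stable} gives
\[
\HH^{n-1}(\{u=0\}\cap D)\ \ge\ c\,w_k^{\,n-1}\bigl(\NN(\tfrac D2)/C-1\bigr)\ \gtrsim\ w_k^{\,n-1}v_kw_k ,
\]
since $v_kw_k\gtrsim L_n$ is large once $L_n$ is fixed large.

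A single ball per shell is far too little. As in \cite[Section 7]{LPS}, I would apply the stable ball lemma in each of $\asymp w_k^{1-n}$ disjoint angular sectors of the shell. For this I would use the version of their construction that localizes on spherical caps, where the frequency control on spheres yields doubling control on most caps. That gives a total per shell of $\gtrsim v_kw_k$, hence per layer $\gtrsim(\ell_k/w_k)\,v_kw_k=\ell_kv_k$. Since all balls are disjoint and lie in $B_2\setminus B_1$,
\[
\HH^{n-1}(\{u=0\}\cap B_2)\ \gtrsim\ \sum_{k\ \text{retained}}\ell_kv_k\ \ge\ v_0\sum_{k\ \text{retained}}\ell_k\ \gtrsim\ \NN ,
\]
using $v_k\ge v_0=c_n\NN$ and the retained lengths summing to at least $\frac12$. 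This is where the weighted H\"older summation of \cite{LPS} is replaced by the trivial bound $v_k\ge v_0$; the linearity of Proposition~\ref{prop:stable} is what removes their $\NN^{-\eps}$ loss.

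The main obstacle is this middle step: producing $\asymp w^{1-n}$ disjoint stable balls per shell, each with index $\gtrsim vw$, rather than one. I would first try to extract it from the proof of \cite[Lemma 6.5]{LPS}, which works with averages over the sphere and so should give a positive proportion of good caps. If only one ball per shell is available, I would instead apply Proposition~\ref{prop:stable} to a stable ball of comparable size to the layer itself. That needs a width with $\ell_k v_k\gtrsim\log$ and gives $\ell_k^{n-1}\cdot v_k\ell_k$, so I would then combine layers by H\"older as in \cite{LPS}, checking that the logarithmic losses are absorbed by the largeness of $\NN$.
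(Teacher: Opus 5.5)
Your deduction of \eqref{eq:main-linear} from \eqref{eq:main-additive} via Theorem~\ref{logunovlowerbound} is correct, and splitting $[\frac{11}{10},\frac{19}{10}]$ into frequency layers is the right start. The main route, however, has a genuine gap. Lemma~\ref{lem:stable-ball} gives \emph{one} stable ball per shell. With one ball per thin shell of width $w_k\asymp\log v_k/v_k$, a layer contributes only about $(\ell_k/w_k)\,v_kw_k^{n}=\ell_kv_kw_k^{n-1}$, which is negligible. Your argument therefore rests entirely on producing $\asymp w_k^{1-n}$ disjoint stable balls of index $\gtrsim v_kw_k$ in every shell. Lemma~\ref{lem:stable-ball} does not provide this, and you do not prove it. It is also far stronger than the theorem needs: it would bound the nodal volume below by $\int_{11/10}^{19/10}\beta(t)\,dt$ rather than by $\beta(\frac{11}{10})\gtrsim\NN$.

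Your fallback is the paper's route, but the finishing step you describe would not work. Take one shell per layer with $w_k=\ell_k/40$, and retain only layers with $v_kw_k^2\ge Q$ for a large dimensional $Q$. This gives $v_kw_k/\log(1/w_k)\ge \mathrm{e}Q\ge L_n$. It also gives $\NN(\frac{D_k}{2})\ge c_nv_kw_k\ge 50c_nQ$, which is large enough to absorb the additive constant of Proposition~\ref{prop:stable}. The discarded widths total at most $\sqrt{Q/v_0}\sum_k10^{-k/2}$, which is small once $\NN$ is large. After rescaling $D_k$ to $B_1$, Proposition~\ref{prop:stable} gives $\HH^{n-1}(\{u=0\}\cap 2D_k)\ge c\,v_kw_k^n$; note this is the nodal set in $2D_k$, not $D_k$.

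The problem is the summation. Without stable growth, $\beta(\frac{19}{10})$ is not controlled by $\NN=\NN(B_{\frac12})$, so the number $m$ of layers is unbounded in terms of $\NN$. Using only $v_k\ge v_0$ with unweighted H\"older loses a factor $m^{1-n}$, and no absorption into the largeness of $\NN$ can repair that. You must keep the geometric weights $v_k=10^kv_0$ inside H\"older:
\[
\sum_k w_k=\sum_k\bigl(10^kw_k^n\bigr)^{\frac1n}10^{-\frac kn}\le\Bigl(\sum_k10^kw_k^n\Bigr)^{\frac1n}\Bigl(\sum_{k\ge0}10^{-\frac{k}{n-1}}\Bigr)^{\frac{n-1}{n}}.
\]
This gives $\sum_kv_kw_k^n\ge c_nv_0\bigl(\sum_kw_k\bigr)^n\ge c_n\NN$, independently of the number of layers. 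With one stable ball per layer and this weighted summation the proof closes, and no per-shell multiplicity of stable balls is needed.
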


\begin{proof}
It suffices first to prove $$\HH^{n-1}\bigl(\{u=0\}\cap B_2\bigr)\geq c_n\NN$$ for $\NN$ above a fixed dimensional
threshold, without assuming $u(0)=0$. We may suppose $\HH^{n-1}\bigl(\{u=0\}\cap B_2\bigr)<\infty$.

{\it Step 1. Frequency layers and separated middle shells.}
By \eqref{eq:frequency-lower}, the monotonicity of $\beta$ established in \eqref{eq:frequency-monotonicity}, after increasing the dimensional threshold for $\NN$, we have
\begin{equation}\label{eq:nu0}
 v_0:=\beta\bigl(\frac{11}{10}\bigr)\geq\beta\bigl(\frac{21}{20}\bigr)\geq c_n\NN,
 \qquad v_0\geq1.
\end{equation}
For $j=0,1,\ldots$, define
\begin{equation}\label{eq:frequency-layers}
 v_j=10^jv_0,\qquad
 I_j=\left\{t\in\left[\frac{11}{10},\frac{19}{10}\right]:
             v_j\leq\beta(t)<10v_j\right\}.
\end{equation}
Since $\beta$ is continuous and nondecreasing, each $I_j$ is an interval, possibly empty. Only finitely many are nonempty because $\beta\bigl(\frac{19}{10}\bigr)$ is finite. This number need not be uniformly bounded. Endpoint conventions
have no effect on
\begin{equation*}\label{eq:layer-lengths}
 \sum_j|I_j|=\frac{19}{10}-\frac{11}{10}=\frac45.
\end{equation*}
Discard zero-length intervals. For each remaining $I_j$, let $\rho_j$ be
its midpoint and define  $$w_j=\frac{|I_j|}{40},\qquad \mathcal S_j=\{x:\rho_j-10w_j\leq|x|\leq\rho_j+10w_j\}.$$ Then,
\begin{equation}\label{eq:middle-shells}
 [\rho_j-10w_j,\rho_j+10w_j]\Subset{\rm int}(I_j),
 \qquad
 \mathcal S_j\subset B_2\setminus B_1,
\end{equation}
and the shells $\mathcal S_j$ are pairwise separated. Moreover,
\begin{equation}\label{eq:width-sum}
 0<w_j\leq
 \sum_jw_j=\frac1{50},\qquad
 v_j\leq\beta(t)\leq10v_j\quad
 \text{ for } t\in[\rho_j-10w_j, \rho_j+10w_j].
\end{equation}
Thus the frequency on each middle shell is controlled by a fixed factor,
although the frequency on the whole annulus need not be.

{\it Step 2. Retain enough width while enforcing the scale condition.}
Fix the constants in Lemma~\ref{lem:stable-ball}, and let
$C_*=C_{n,S_n}$ be the constant in Proposition \ref{prop:stable}. Choose a dimensional constant $Q\geq1$ such that
\begin{equation*}\label{eq:Q-choice}
 \mathrm eQ\geq L_n,\qquad50c_nQ\geq2C_*.
\end{equation*}
Retain the set of indices
\begin{equation}\label{eq:retained-indices}
 \mathcal J=\{j:v_jw_j^2\geq Q\}.
\end{equation}
This is a selection within the proof, not an additional assumption on $u$.
For a discarded index $j$, \eqref{eq:frequency-layers} and \eqref{eq:retained-indices} give
\begin{equation*}\label{eq:discard-one}
 w_j<\sqrt{\frac Q{v_j}}
 =\sqrt{\frac Q{v_0}}\,10^{-\frac j2}.
\end{equation*}
Consequently,
\begin{equation}\label{eq:discard-width}
 \sum_{j\notin\mathcal J}w_j
 \leq\sqrt{\frac Q{v_0}}\sum_{j=0}^\infty10^{-\frac j2}
 =\frac{1}{1-10^{-\frac12}}\sqrt{\frac{Q}{v_0}}
 \leq\frac1{100},
\end{equation}
provided $v_0\geq 10^4\frac{Q}{\bigl(1-10^{-\frac12}\bigr)^2}$. By \eqref{eq:nu0}, this last
condition is achieved by increasing only the dimensional threshold for
$\NN$. Equations \eqref{eq:width-sum} and \eqref{eq:discard-width} imply
\begin{equation}\label{eq:retained-width}
 \sum_{j\in\mathcal J}w_j
 \geq\frac1{50}-\frac1{100}=\frac1{100}.
\end{equation}
For every retained index $j$,
\begin{equation*}\label{eq:retained-size}
 v_jw_j\geq\frac Q{w_j}\geq50Q,
 \qquad
 \frac{v_jw_j}{\log\bigl(\frac{1}{w_j}\bigr)}
 \geq\frac Q{w_j\log\bigl(\frac{1}{w_j}\bigr)}\geq\mathrm eQ\geq L_n.
\end{equation*}
The penultimate inequality follows from
$-t\log t\leq {\rm e}^{-1}$ for $0<t<1$. In addition,
$20w_j\leq \frac25<\rho_j$ and $w_j\leq\frac{1}{50}<\frac{1}{20}$.
Thus every retained shell meets all the assumptions of
Lemma~\ref{lem:stable-ball}.

{\it Step 3. Stable balls and their nodal volume contributions.}
For each $j\in\mathcal J$, choose $D_j=B(x_j,w_j)$ from
Lemma~\ref{lem:stable-ball} and write $a_j=\NN\bigl(\frac{D_j}{2}\bigr)$. Then
\begin{equation}\label{eq:chosen-balls}
 \overline{4D_j}\subset\mathcal S_j\Subset B_2,\qquad
 a_j\geq c_nv_jw_j\geq2C_*,\qquad
 \NN(2D_j)\leq S_na_j.
\end{equation}
The closed balls $\overline{4D_j}$ are pairwise disjoint by
\eqref{eq:middle-shells}.
Define $U_j(y)=u(x_j+w_jy)$. Since $\overline{4D_j}\Subset B_2\Subset B_4$,
$U_j$ is harmonic in an open neighborhood of $\overline{B_4}$, even though
$u$ was only assumed harmonic in the original open ball. The doubling indices of $U_j$ obey
\begin{equation}\label{eq:rescaled-indices}
 \NN_{U_j}(B_{\frac12})=a_j,
 \qquad \NN_{U_j}(B_2)=\NN(2D_j)\leq S_na_j.
\end{equation}
The zero sets correspond exactly under the same dilation:
\begin{equation}\label{eq:rescaled-zero-set}
\begin{aligned}
 \{U_j=0\}\cap B_2
 =\left\{\frac{x-x_j}{w_j}:x\in \{u=0\}\cap2D_j\right\},\\
 \HH^{n-1}\bigl(\{U_j=0\}\cap B_2\bigr)
 =w_j^{1-n}\HH^{n-1}\bigl(\{u=0\}\cap2D_j\bigr).
 \end{aligned}
\end{equation}

Apply Proposition~\ref{prop:stable} to \eqref{eq:rescaled-indices}, and use
\eqref{eq:rescaled-zero-set}:
\begin{equation*}\label{eq:rescaled-stable}
 a_j\leq C_*\left(1+
        \frac{\HH^{n-1}\bigl(\{u=0\}\cap2D_j\bigr)}{w_j^{n-1}}\right).
\end{equation*}
Since $a_j\geq2C_*$ by \eqref{eq:chosen-balls}, the additive constant is
absorbed as follows:
\begin{align}
 \HH^{n-1}(\{u=0\}\cap2D_j)
 &\geq w_j^{n-1}\left(\frac{a_j}{C_*}-1\right)
 \geq\frac{a_j}{2C_*}w_j^{n-1}\notag\\
 &\geq\frac{c_n}{2C_*}v_jw_j^n.\label{eq:local-layer-volume}
\end{align}
The extra factor $w_j$ in $w_j^n$ comes from the local index lower bound
$a_j\geq c_nv_jw_j$, not from the scaling of area.

{\it Step 4. Weighted summation without a layer-count loss.}
The balls $2D_j$ are disjoint subsets of $B_2$, so
\eqref{eq:local-layer-volume} and \eqref{eq:frequency-layers} give
\begin{equation}\label{eq:weighted-volume}
 \HH^{n-1}\bigl(\{u=0\}\cap B_2\bigr)\geq\sum_{j\in\mathcal J}\HH^{n-1}\bigl(\{u=0\}\cap2D_j\bigr)
 \geq c_nv_0\sum_{j\in\mathcal J}10^jw_j^n.
\end{equation}
To use the width lower bound \eqref{eq:retained-width}, retain the frequency
weights. H\"older's inequality gives
\begin{equation}\label{eq:weighted-holder}
 \begin{split}
 \sum_{j\in\mathcal J}w_j
 &=\sum_{j\in\mathcal J}(10^jw_j^n)^{\frac 1n}10^{-\frac jn}\\
 &\leq\left(\sum_{j\in\mathcal J}10^jw_j^n\right)^{\frac1n}
       \left(\sum_{j=0}^\infty10^{-\frac{j}{n-1}}\right)^{\frac{n-1}{n}}.
 \end{split}
\end{equation}
Then,
\eqref{eq:retained-width} and \eqref{eq:weighted-holder} imply
\begin{equation}\label{eq:weighted-sum-lower}
 \sum_{j\in\mathcal J}10^jw_j^n
 \geq\left(\sum_{j\in\mathcal J}w_j\right)^n
          (1-10^{-\frac{1}{n-1}})^{n-1}
 \geq100^{-n}(1-10^{-\frac{1}{n-1}})^{n-1}>0.
\end{equation}
This bound is independent of the number and highest frequency of the
layers. Equations
\eqref{eq:nu0}, \eqref{eq:weighted-volume}, and
\eqref{eq:weighted-sum-lower} now give
\begin{equation}\label{eq:large-N-conclusion}
 \HH^{n-1}\bigl(\{u=0\}\cap B_2\bigr)\geq c_nv_0\geq c_n\NN
\end{equation}
for all $\NN$ above a fixed dimensional threshold.

{\it Step 5. Small indices and a zero at the origin.}
Let $N_*(n)$ dominate the dimensional thresholds already chosen. For $\NN<N_*(n)$,
$$\NN\leq N_*(n)\bigl(1+ \HH^{n-1}\bigl(\{u=0\}\cap B_2\bigr)\bigr).$$ Together with
\eqref{eq:large-N-conclusion}, this proves \eqref{eq:main-additive} for all
$\NN$ after increasing $C_n$.

If $u(0)=0$, the local nodal lower bound \eqref{eq:nadirashvili} in $B_1$
gives
$$\HH^{n-1}\bigl(\{u=0\}\cap B_2\bigr)\geq\HH^{n-1}(\{u=0\}\cap B_1)\geq c_1(n)>0.$$
Hence,
\begin{equation}\label{eq:remove-additive}
1+ \HH^{n-1}\bigl(\{u=0\}\cap B_2\bigr)\leq(1+c_1(n)^{-1}) \HH^{n-1}\bigl(\{u=0\}\cap B_2\bigr).
\end{equation}
Combining \eqref{eq:remove-additive} with \eqref{eq:main-additive} proves
\eqref{eq:main-linear}. The condition $u(0)=0$ is used only at this last
step.
\end{proof}

\section{Proof of Theorem \ref{thm:main-posi} and optimality of its logarithmic order}\label{sec:main-posi-proof}

Following an observation of Greilhuber, Logunov and Sodin, we show in this section that the method developed above for the estimate of nodal volume can also be adapted to control the volume of the positivity set. The main additional ingredient is the analytic small cube estimate of Donnelly and Fefferman \cite[Proposition 5.11]{DonnellyFefferman1988}, in the form recorded in \cite[Proposition C.1]{LPS}. Combining this estimate with the fractional moment upper bound in Proposition \ref{prop:gradient-upper} and the left hand side inequality in \eqref{eq:stable-comparison}, we first obtain the desired estimate under the stable growth assumption. We then use the same frequency layer decomposition as in the proof of Theorem \ref{thm:main-2}, with a modified final summation, to prove Theorem \ref{thm:main-posi} in general. At the end of this section, we show that the logarithmic order is optimal up to dimensional constants.

Theorem \ref{thm:main-posi} is a direct consequence of the following theorem.
\begin{theorem}\label{sv:thm:main}
Let $n\geq3$. There is a constant $c_n>0$, depending only on $n$, such that
every non-zero harmonic function $u$ in the open ball $B_1$ with $u(0)=0$ satisfies
\begin{equation}\label{sv:eq:main}
\HH^{n}\bigl(\{ u>0\}\cap B_{\frac{1}{2}}\bigr)
\ge c_n\bigl(\log(1+\NN)\bigr)^{1-n}.
\end{equation}
\end{theorem}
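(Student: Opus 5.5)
The plan is to follow the outline given in the introduction. First I prove a local estimate on stable growth balls. Then I remove the stability assumption with the frequency layer decomposition from the proof of Theorem~\ref{thm:main-2}, replacing its weighted H\"older summation by an unweighted one.

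Note that $u$ is defined only on $B_1$, so the argument should be run after rescaling $x\mapsto x/4$ (or $x\mapsto x/8$). Then $\NN$ changes only by a bounded-index comparison, because $\log(1+\NN)$ is insensitive to constant multiplicative changes of $\NN$ once the origin is a zero. For bounded $\NN$, the conclusion reduces to a uniform positive lower bound on the volume of $\{u>0\}$ near a zero. This follows from Theorem~\ref{logunovlowerbound} combined with compactness, or more directly from the mean value property: $u(0)=0$ forces $\int_{B_r}u^+=\int_{B_r}u^-$, and bounded doubling gives $\sup_{B_r}|u|\lesssim r^{-n}\int_{B_r}|u|$ together with an $L^2$–$L^1$ reverse bound. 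Hence $\VV(\{u>0\}\cap B_r)\gtrsim r^n$, with constant depending on the doubling bound.

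\emph{Step 1 (stable case).} Let $U$ be a rescaled function with $S$-stable growth and $\NN_U(B_{1/2})=a$ large. I aim for
\[
\VV\bigl(\{U>0\}\cap B_2\bigr)\ge c_{n,S}.
\]
The left inequality of \eqref{eq:stable-comparison} gives $\int_{B_{8/5}}g^{1/2}\ge c\,a^{1/2}$. Proposition~\ref{prop:gradient-upper} shows that this mass cannot come from a small set unless the nodal set is large. More precisely, the layer-cake bound confines the contributions with $g\gtrsim a$ to the $C/a$-neighbourhood of the zero set, so the set $Z=\{x\in B_{8/5}: d(x)\le C/a\}$ must carry a definite share of the integral. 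Next I would use the Donnelly–Fefferman small cube estimate \cite[Proposition C.1]{LPS}: after partitioning $B_2$ into cubes of side $\sim 1/a$ (with an overlapping grid to avoid boundary effects), all but a small proportion of the cubes have doubling index bounded by a constant $K_n$. The exceptional volume can be made smaller than the portion of $Z$ already secured. This leaves $\gtrsim a^{n-1}\cdot$const good cubes that contain zeros, and each has volume $\sim a^{-n}$. In each good cube I pick a zero $z$ and a ball $B(z,r)$ with $r\sim 1/a$ on which the doubling is bounded. The mean value argument above then gives $\VV(\{U>0\}\cap B(z,r))\ge c r^n$. A Vitali disjoint selection gives a total of $\gtrsim a^{n-1}\cdot a^{-n}\cdot a = $ const. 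The count of good zero-cubes comes from $\int g^{1/2}\gtrsim a^{1/2}$ together with the bound $g\lesssim a$ on most of $Z$, giving $|Z|\gtrsim 1/a$ and hence $\gtrsim a^{n-1}$ cubes.

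\emph{Step 2 (general case).} Repeat Steps 1–3 of the proof of Theorem~\ref{thm:main-2}: frequency layers $I_j$, widths $w_j$, and retained indices $\mathcal J$ with $\sum_{\mathcal J}w_j\ge 1/100$, each with a stable ball $D_j$ of radius $w_j$. Rescaling Step 1 gives $\VV(\{u>0\}\cap 2D_j)\ge c\,w_j^n$, with no frequency weight. The number of nonempty layers is at most $m\le C\log(1+\beta(19/10))$. Since $u(0)=0$, I would also want the frequency up to radius $2$ bounded by $C\NN$; this holds after the initial rescaling, which is arranged so that the layers live inside the region where $\beta\lesssim\NN$. Then H\"older gives
\[
\sum_{j\in\mathcal J}w_j^n\ge m^{1-n}\Bigl(\sum_{j\in\mathcal J} w_j\Bigr)^n\ge c_n(\log(1+\NN))^{1-n}.
\]
Because the balls $2D_j$ are disjoint, summing these contributions proves the claim.

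\emph{Main obstacle.} The hardest part is Step 1, specifically converting the fractional-moment lower bound into many well-separated zeros, each surrounded by a bounded-doubling ball of radius $\sim 1/a$. The fractional moment controls $g$ only on average, while the small cube estimate controls only the doubling indices of cubes. Matching the exceptional volume in the Donnelly–Fefferman estimate against the mass of $Z$ is where I expect to need careful choices of constants: first the cube scale, then $K_n$, then the exceptional proportion.
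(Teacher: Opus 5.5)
Your overall architecture matches the paper's. Step~2 is essentially right: frequency layers as in Theorem~\ref{thm:main-2}, stable balls of radius $w_j$ contributing $\gtrsim w_j^n$ with no frequency weight, and an unweighted H\"older sum over at most $C_n\log(1+\NN)$ layers. The gap is in Step~1, at the point you yourself flag. From $\int_{B_{8/5}}g^{1/2}\ge c\,a^{1/2}$ alone you cannot conclude that a set of definite measure has $g\sim a$, because the mass could sit on a thin set where $g\gg a$. Your assertion ``$g\lesssim a$ on most of $Z$'' is unjustified: near the zero set $g$ behaves like $1/d$ and is unbounded on $Z$. You say Proposition~\ref{prop:gradient-upper} rules out concentration ``unless the nodal set is large'', but you never rule out a large nodal set.

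The missing ingredient is the \emph{upper} bound on nodal volume. In the stable case, Theorem~\ref{ndupper} gives $\HH^{n-1}(\{U=0\}\cap B_2)\le C_n\NN_U(B_2)\le C_nSa$. Feeding this into Proposition~\ref{prop:gradient-upper} at an exponent $q>\frac12$ (the paper takes $q=\frac34$) gives $\int_{B_{8/5}}(g/a)^{3/4}\le C_0$. Together with $\int(g/a)^{1/2}\ge c_0$, H\"older yields $\VV(\{g\ge\alpha a\})\ge\eta(n,S)>0$. By \eqref{eq:gradient-distance}, each such point then has a zero within distance $L_0/a$.

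The size of this set matters, and your bookkeeping is internally inconsistent. $|Z|\gtrsim 1/a$ gives about $a^{n-1}$ disjoint balls of volume $\sim a^{-n}$, so a total of only $\gtrsim a^{-1}$. The extra factor $a$ in ``$a^{n-1}\cdot a^{-n}\cdot a$'' has no source. With $a^{-1}$ per stable ball you would get only $\VV(\{u>0\}\cap 2D_j)\gtrsim w_j^n/a_j$, and the factor $a_j$ (which can be as large as a multiple of $\NN$) destroys the logarithmic bound.

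A set of measure $\sim 1/a$ also could not survive the Donnelly--Fefferman exceptional set. Its measure $\eps$ must be fixed independently of $a$, so that $C_{n,\eps}$, and hence the local doubling constant $\gamma_0$ of Lemma~\ref{sv:lem:fixed-scale}, is uniform. You need a set of measure $\gtrsim 1$.

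Once you have $\VV(E)\ge\eta$, the rest of your Step~1 goes through as in Proposition~\ref{sv:prop:stable-sign}:
\begin{enumerate}
\item Remove the exceptional centres of Lemma~\ref{sv:lem:fixed-scale} with $\delta=\eta/2$.
\item Recentre each remaining point at a nearby zero $z$, using $B(x,r)\subset B(z,2r)$ and $B(z,4r)\subset B(x,16r)$.
\item Apply Lemma~\ref{sv:lem:balance} on $B(z,2r)$.
\item Select $\gtrsim a^{n}$ disjoint balls of radius $\sim 1/a$; their total contribution is $\gtrsim 1$.
\end{enumerate}
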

We begin with some useful results.
\begin{lemma}\label{sv:lem:balance}
Let $B=B(z,R)$, and let $f$ be a non-zero harmonic function in $2B$ with
$f(z)=0$. If there is a positive constant $d_0$ such that
$$
\int_{2B}f^2\,dx\le d_0\int_Bf^2\,dx<\infty,
$$
then
\begin{equation}\label{sv:eq:balance}
\HH^n(\{f>0\}\cap B)
\ge \frac{\VV(B)}{2d_0}.
\end{equation}
\end{lemma}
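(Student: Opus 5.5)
The plan is to combine the mean value property at the zero $z$ with an $L^\infty$--$L^2$ comparison coming from the doubling hypothesis. The constant $\frac{1}{2d_0}$ suggests the following chain: $\int_B f^2$ is bounded by $\sup_B|f|$ times $\int_B|f|$, and the latter is twice $\int_B f^+$ by the mean value property. In turn $\int_B f^+$ is bounded by $\sup_B|f|$ times the measure of the positivity set. Finally, $(\sup_B|f|)^2$ is bounded by $d_0\,\VV(B)^{-1}\int_B f^2$.

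\emph{Step 1 (balance of signs).} Since $f$ is harmonic in $2B$ and $f(z)=0$, the volume mean value property on $B=B(z,R)$ gives $\int_B f\,dx=0$. Hence $\int_B f^+\,dx=\int_B f^-\,dx=\frac12\int_B|f|\,dx$. Write $P=\{f>0\}\cap B$. Then
$$\int_B f^2\,dx\le \sup_B|f|\int_B|f|\,dx=2\sup_B|f|\int_B f^+\,dx\le 2\bigl(\sup_B|f|\bigr)^2\,\HH^n(P).$$

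\emph{Step 2 (sup bound from the doubling hypothesis).} For $x\in B$ we have $B(x,R)\subset 2B$. The mean value property on $B(x,R)$ followed by Cauchy--Schwarz gives
$$|f(x)|\le \frac{1}{\VV(B)}\int_{B(x,R)}|f|\,dy\le \VV(B)^{-1/2}\Bigl(\int_{2B}f^2\,dy\Bigr)^{1/2}.$$
Using the hypothesis, $(\sup_B|f|)^2\le d_0\,\VV(B)^{-1}\int_B f^2\,dx$. (If $f$ is only harmonic in the open ball $2B$, the balls $B(x,R)$ for $x\in B$ still lie in $2B$, possibly touching the boundary. Apply the mean value property on $B(x,R')$ with $R'<R$ and let $R'\to R$; the $L^2$ integrability is assumed.)

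\emph{Step 3 (conclusion).} Insert Step 2 into Step 1 to get
$$\int_B f^2\le 2d_0\,\VV(B)^{-1}\HH^n(P)\int_B f^2.$$
Since $f$ is a non-zero harmonic function, it is real-analytic and cannot vanish identically on $B$, so $\int_B f^2>0$. Dividing by it yields \eqref{sv:eq:balance}.

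There is no serious obstacle here; the only points needing care are the limiting argument at the boundary of $2B$ and the nonvanishing of $\int_B f^2$, both of which are routine.
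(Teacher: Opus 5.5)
Your proposal is correct and follows the paper's proof: the mean value property at $z$ gives $\int_B f_+=\frac12\int_B|f|$, and averaging over $B(x,R)\subset 2B$ gives $M^2\le d_0\VV(B)^{-1}\int_B f^2$ (you use Cauchy--Schwarz where the paper uses subharmonicity of $f^2$, which is the same estimate). Your boundary limiting argument is harmless but unnecessary: for $x$ in the open ball $B$, $|x-z|<R$ gives $\overline{B(x,R)}\subset\overline{B(z,|x-z|+R)}\subset 2B$.
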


\begin{proof}
Set $M=\sup_B|f|$. Subharmonicity of $f^2$ and
$B(x,R)\subset2B$ for any $x\in B$ give
$$f(x)^2\le\frac{1}{\VV(B)}\int_{B(x, R)}f^2\,dx\le\frac{1}{\VV(B)}\int_{2B}f^2\,dx\le\frac{d_0}{\VV(B)}\int_{B}f^2\,dx,$$
which implies
\begin{equation}\label{eq:sv-1}
M^2\le \frac{d_0}{\VV(B)}\int_{B}f^2\,dx.
\end{equation}
Since $f(z)=0$, the mean value property implies $\int_Bf\,dx=0$. Thus, writing
$f_+=\max\{f,0\}$, we know
\begin{equation}\label{eq:sv-2}
M\HH^n(\{f>0\}\cap B)
\ge\int_Bf_+\,dx
=\frac12\int_B|f|\,dx\ge\frac{\int_Bf^2\,dx}{2M}.
\end{equation}
Combining \eqref{eq:sv-1} and \eqref{eq:sv-2} proves \eqref{sv:eq:balance}.
\end{proof}

The additional ingredient we require is the following
analytic small cube estimate of Donnelly and Fefferman \cite[Proposition 5.11]{DonnellyFefferman1988}, as formulated
in \cite[Proposition C.1]{LPS}.
\begin{proposition}[{\cite[Proposition C.1]{LPS}}]\label{prop:C1}
Let $k$ be a sufficiently large integer depending on $n$. Let $G(z)$ be a non-zero function holomorphic in $\{z\in \C^n: |z|\le 3^k\}$, and satisfy
$$
\max_{|z|\le 2^k}|G(z)|
\le |G(0)|e^d,
$$
for some $d\ge 1$ sufficiently large depending on $n$. Assume that
$G(z)$ is real and nonnegative for real
$
x\in \QQ_0
:=
\bigl\{(x_1,\ldots,x_n)\in\R^n:
|x_i|\le 1,\ 1\le i\le n\bigr\}.
$
Suppose that $\RR\subset \frac12 \QQ_0$ is a sub-cube and subdivide $\RR$
into equal sub-cubes $\RR_i$ of side length
$\asymp_n \frac{1}{d}.$ Let $\eps>0$ be given. Then there exists a subset
$E\subset \QQ_0$ of measure $\HH^n(E)\le \eps$ and a constant $C_{n, \eps}\ge 1$ depending only on $n$ and $\eps$ such that
$$
\left|
\log G(x)
-
\log\left(
\fint_{\RR_i} G\,dV
\right)
\right|
\le C_{n, \eps}
\qquad
\text{for all }x\in \RR_i\setminus E.
$$
\end{proposition}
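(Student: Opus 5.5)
The plan is to reduce the estimate to a statement about \emph{local doubling indices} on the cubes $\RR_i$. We will show that only a proportion $\lesssim\eps$ of the cubes can have large local index. On each remaining cube, $\log G$ will be shown to equal its logarithmic average up to a bounded error, outside a set of small relative measure. The set $E$ is then the union of the bad cubes and of these small exceptional pieces of the good cubes. Throughout, $G$ is treated as a holomorphic function of bounded growth, and the nonnegativity of $G$ on real points is what allows us to speak of $\log G$ and of $\fint_{\RR_i}G\,dV$.

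\emph{Step 1 (one-variable decomposition).} Restrict $G$ to complex lines parallel to a coordinate axis through points of $\frac12\QQ_0$. The hypothesis $\max_{|z|\le2^k}|G|\le|G(0)|e^d$ has to be transferred to most of these lines. For this I would use the plurisubharmonicity of $\log|G|$: averaging over a small polydisc shows that for most base points $y\in\frac12\QQ_0$ one has $\log|G(y)|\ge\log|G(0)|-Cd$, and Chebyshev puts the remaining base points into $E$. On such a line $f(\zeta)=G(y+\zeta e_\ell)$, Jensen's formula bounds the number of zeros in a disc of fixed radius by $Cd$. Writing $\log|f|=\sum_a\log|\zeta-a|+h$ as in Step 1 of Lemma \ref{lem:compactness}, we have $|\nabla h|\lesssim d$ on a smaller disc. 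In particular $h$ oscillates by $O(1)$ on any real segment of length $\asymp 1/d$.

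\emph{Step 2 (counting bad cubes).} For a cube $\RR_i$ of side $\asymp1/d$, let $\NN_i$ be the doubling index of $G$ from a complex polydisc of radius $\asymp 1/d$ around $\RR_i$ to one of radius $\asymp 10/d$. The zeros of $f$ are counted with multiplicity. Along each line the cubes have bounded overlap, there are $\asymp d$ cubes per line, and only $Cd$ zeros are available, so the sum of the local indices along a line is $\lesssim d$. Integrating over lines with Fubini gives $\sum_i\NN_i\lesssim d\cdot d^{n-1}$, which is of the order of the total number of cubes. Chebyshev then shows that the cubes with $\NN_i>K$ have total measure $\le C/K$. We choose $K=2C/\eps$ and put these cubes into $E$.

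\emph{Step 3 (good cubes) and the main obstacle.} On a good cube, rescale by the factor $d$. We then have a holomorphic function on a polydisc of fixed size whose doubling index is at most $K$, and we need a \emph{two-sided} comparison between $\log G(x)$ and $\log\fint_{\RR_i}G$. The upper bound $\log G(x)\le\log\fint G+C_K$ should follow from bounded doubling together with a Remez-type inequality for holomorphic functions of bounded doubling, namely Brudnyi's extension of \eqref{circle-propagation}. That inequality controls the supremum of $G$ on the polydisc by its average over the real cube. The lower bound is the hard part. Here I would apply a Cartan--Boutroux lemma on each line in the rescaled cube: outside a union of discs of total radius $\eta$, we have $\log|f|\ge\log\sup|f|-C K\log(1/\eta)$. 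Integrating over lines with Fubini, the exceptional set has relative measure $\lesssim\eta$ in each good cube. Taking $\eta\asymp\eps$ gives total exceptional measure $\lesssim\eps$ and a constant $C_{n,\eps}$. I expect this lower bound, and its uniformity in the passage from single lines to the $n$-dimensional cube, to be the main technical point. In particular, the base-point selection in Step 1 must be compatible with the good-cube rescaling, so that the bounded local index indeed holds on the lines used in Cartan's lemma.
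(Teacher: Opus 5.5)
The paper does not prove this proposition. It is imported from \cite{LPS} (Donnelly--Fefferman's Proposition~5.11) and used as a black box in Lemma~\ref{sv:lem:fixed-scale}, so your sketch has to stand on its own. It has a genuine gap in Step~2, which is where the content of the statement lies.

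You assert that the index of a cube is controlled by the number of zeros in a fixed dilate. From this you conclude that the indices along a line sum to $\lesssim d$ and that $\sum_i\NN_i\lesssim d^n$. This is false. Distant zeros contribute through the drift $\frac1d\sum_a(\zeta-a)^{-1}$ of their logarithmic potential, and uniformly in $d$ this drift is only weak-$L^1$, not $L^1$ (compare \eqref{eq:weak-gradient}).

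A concrete counterexample is $G(z)=(z_1-\frac14)^{2d}$. It is nonnegative on $\R^n$. Since $|G(0)|=4^{-2d}$ and $\max_{|z|\le2^k}|G|\le(2^k+\frac14)^{2d}$, it satisfies the growth hypothesis with $d$ replaced by $2d\log(2^{k+2}+1)\asymp_n d$. Let your polydisc radii be $\rho\asymp\frac1d$ and $10\rho$, and take a cube with center $x$ and $\delta=|x_1-\frac14|\ge10\rho$. Then
\[
 \NN_i=2d\log\frac{\delta+10\rho}{\delta+\rho}\asymp\frac{d\rho}{\delta}\asymp\frac{1}{\delta},
\]
even though for $\delta\gg\frac1d$ no zero of $G$ lies within distance $\frac\delta2$ of the cube. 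Hence $\sum_i\NN_i\gtrsim d^n\int_{10\rho}^{1/4}\frac{dt}{t}\gtrsim d^n\log d$. Along a single line in the $e_1$ direction the indices sum to $\asymp d\log d$, while that line carries only $2d$ zeros.

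Chebyshev therefore bounds the proportion of cubes with $\NN_i>K$ only by $C(\log d)/K$. Your $C_{n,\eps}$ would then depend on $d$, which is exactly the uniformity the proposition asserts. In this example the bad proportion is in fact $\asymp\frac1K$, so what you need is a distributional bound on the indices, not a bound on their sum. Such a bound cannot come from counting zeros with absolute values. For zeros at spacing $\asymp\frac1d$ (e.g.\ $G=T_d(z_1)^2$, with $T_d$ the Chebyshev polynomial of even degree $d$), the sum $\frac1d\sum_a|x-a|^{-1}$ over distant zeros is $\asymp\log d$ on most of $\frac12\QQ_0$, while the indices stay bounded. Any correct argument has to exploit the cancellation in $\sum_a(\zeta-a)^{-1}$.

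Step~2 has a second problem even if a correct one-variable bound is granted. Your Jensen counts live on lines $y+\zeta e_\ell$ with real base points $y$. But $\NN_i$ is a supremum over an $n$-dimensional complex polydisc, and reaching its points by coordinate moves uses lines through non-real base points. On those lines, Fubini over real $y$ gives no information.

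Step~1 has a related slip. The sub-mean value property of $\log|G|$ over polydiscs controls $\log|G|$ only on sets of positive $2n$-dimensional measure, not on the real slice $\frac12\QQ_0$. The lower bound at real base points should instead come from Cartan's lemma applied to $\zeta\mapsto G(\zeta\omega)$, $\omega\in\Sph^{n-1}$, and integrated in polar coordinates. That gives $\log|G(y)|\ge\log|G(0)|-C_\eta d$ off a set of measure $\lesssim\eta$.

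Finally, Step~3 is not where the difficulty lies. Suppose a cube has polydisc index at most $K$. Apply the multidimensional Remez inequality you already invoke to the set $\{x\in\RR_i:G(x)<t\sup_{\RR_i}G\}$. It shows this set has relative measure at most $Ct^{1/(CK)}$. That gives both $\sup_{\RR_i}G\le C_K\fint_{\RR_i}G$ and the lower bound off a small set at once. Cartan's lemma on individual lines is the wrong tool here, because lines close to $\{G=0\}$ contain no point where $|G|$ is comparable to the cube maximum.

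So the whole weight of the proposition rests on showing, uniformly in $d$, that all but an $O(\eps)$-proportion of the cubes are good. That is the step your sketch does not supply.
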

Here and below, $A\asymp_{a, b}B$ means that there is a constant $C\ge1$, depending only on $a$ and $b$, such that $C^{-1}B\le A\le CB$. For a measurable set $X$ and  a function $f\in L^1(X)$, we denote
$$
\fint_X f\,dx:=\frac{1}{\VV(X)}\int_X f\,dx,
$$
and $M_f(r):=\sup_{B_r}|f|$.

The next lemma provides the link between the analytic small cube estimate and the local positivity set estimate. It shows that a global growth bound of order $T$ forces a uniform local $L^2$-doubling bound at scale $T^{-1}$ for all centers outside a set of arbitrarily small measure.
\begin{lemma}\label{sv:lem:fixed-scale}
Fix $n\ge2$, $A_0\ge1$, $L_0\ge1$, and $\delta>0$. There exist constants $\gamma_0\ge1$ and $T_0\ge1$, depending only on $n,A_0,L_0,\delta$, with the following property. For every $T\ge T_0$, suppose that $f$ is a non-zero harmonic function in $B_4$ with $M_f(4)<\infty$ and
\begin{equation*}\label{sv:eq:global-growth}
M_f(4)\le e^{A_0T}M_f\bigl(\frac32\bigr).
\end{equation*}
Then
\begin{equation}\label{sv:eq:bad-centers}
\HH^n\bigl(\bigl\{x\in B_{\frac85}:
\int_{B(x,16r)}f^2\,d y>\gamma_0\int_{B(x,r)}f^2\,d y
\bigr\}\bigr)\le\delta,
\end{equation}
where $r=\frac{L_0}{T}$. Moreover, $T_0$ may be chosen sufficiently large so that all the balls appearing in \eqref{sv:eq:bad-centers} are contained in $B_2$.
\end{lemma}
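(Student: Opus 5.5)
We prove Lemma \ref{sv:lem:fixed-scale} by applying Proposition \ref{prop:C1} to $G=f^2$, holomorphically extended, on cubes of unit size after rescaling. Then we read off the $L^2$-doubling bound at scale $r=L_0/T$ from the near-constancy of $\log G$ on the small sub-cubes.

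\emph{Step 1: holomorphic extension and growth.} As in Step 1 of the proof of Lemma \ref{lem:directions}, expand $f=\sum_k P_k$ in homogeneous harmonic polynomials. The coefficient bound \eqref{eq:Pk-bound}, with $M(3)$ replaced by $M_f(4)$ and $3^{-k}$ by $4^{-k}$, gives $|P_k(\zeta)|\le C_n(1+k)^{C_n}(|\zeta|/4)^kM_f(4)$ for $\zeta\in\C^n$. Here we use that a homogeneous harmonic polynomial satisfies $|P_k(\zeta)|\le C_n^{\,}(1+k)^{C_n}|\zeta|^k\sup_{\Sph^{n-1}}|P_k|$ (a Lelong/Siciak-type bound; alternatively apply the Poisson kernel extension). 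Hence $f$ extends holomorphically to a complex neighbourhood of radius $c_n$ around $\overline{B_{7/4}}$, bounded by $C_nM_f(4)\le C_ne^{A_0T}M_f(\frac32)$. Fix a grid of cubes $\QQ$ of side $\ell_n$ small, covering $B_{8/5}$ with $2\QQ$-type enlargements inside $B_{7/4}$. For a cube with center $y$, set $G_y(z)=f(y+\lambda z)^2$, where $\lambda=\ell_n3^{-k}$ is a fixed scaling. Then $G_y$ is holomorphic on $|z|\le 3^k$, and $G_y\ge0$ on real $\QQ_0$.

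\emph{Step 2: the normalization at the origin.} This is the main obstacle: Proposition \ref{prop:C1} needs $\max_{|z|\le2^k}|G_y|\le|G_y(0)|e^d$, but $f(y)$ may be tiny or zero. I would use the overlapping grid trick announced in the outline. By propagation of smallness, or more simply by Lemma \ref{lem:directions}-type Remez arguments, the set of $y\in B_{8/5}$ with $|f(y)|\le e^{-C'T}M_f(\frac32)$ has measure $\le\delta/4$ for $C'=C'(n,A_0,\delta)$ large. A cleaner route is Cartan's lemma applied to the complex extension, or a Remez inequality for $f$ along lines: the holomorphic one-variable restrictions with growth $e^{CT}$ have sublevel sets $\{|f|<e^{-C'T}\}$ of small measure, by the Cartan/Remez estimate on complex discs, and then Fubini gives the $n$-dimensional bound. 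Choose a finite family of shifted grids, with the number depending on $n$, so that every $x\in B_{8/5}$ lies deep inside $\frac12\QQ_0$ for some shifted cube. For each cube, pick its center $y$ among the good points by shifting slightly; a positive-measure choice of shift vector exists by averaging. Then $d\asymp C_{n,A_0,\delta}T$, since $M_f(\frac32)\le C\sup_{\text{cube}}|f|$ is not needed: only the upper bound $e^{A_0T}M_f(\frac32)$ over $|f(y)|\ge e^{-C'T}M_f(\frac32)$ enters.

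\emph{Step 3: applying the small cube estimate.} Apply Proposition \ref{prop:C1} with $\eps=\delta/(4K)$, where $K$ is the total number of cubes and shifts, a dimensional constant. Subdivide $\RR=\frac12\QQ_0$ into sub-cubes of side $\asymp 1/d$. In original coordinates these are cubes of side $\asymp_{n,A_0,\delta}1/T$. Choosing $L_0$-compatible constants, each ball $B(x,r)$, $r=L_0/T$, contains a definite fraction of some sub-cube, and $B(x,16r)$ meets at most $C(n,L_0,A_0,\delta)$ sub-cubes. Outside the exceptional set $E$, $G$ is comparable within $e^{C_{n,\eps}}$ to the average over its sub-cube. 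We need control of neighbouring sub-cube averages too. Here we use that adjacent sub-cubes both contain non-exceptional points, because $E$ is small, so the exceptional contribution is absorbed by enlarging $E$ to the set of $x$ whose $16r$-neighbourhood has an exceptional density exceeding $1/2$. By a maximal-function (Chebyshev) argument, this enlarged set has measure $\le C\eps$. Combined with the harmonic subharmonicity bound $\sup_{\text{sub-cube}}G\le C\fint_{\text{doubled sub-cube}}G$, this yields $\int_{B(x,16r)}f^2\le\gamma_0\int_{B(x,r)}f^2$ outside a set of measure $\le\delta$. Finally, take $T_0$ large so that $17r<\frac25$, which ensures all balls lie in $B_2$.
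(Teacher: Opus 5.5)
Your overall architecture matches the paper's. You complexify $f$ on finitely many patches and apply Proposition~\ref{prop:C1} to $G=f^2$, rescaled around a point where $|f|\ge e^{-CT}M_f(\frac32)$. You then discard centres near exceptional mass by Chebyshev and chain sub-cubes from $\QQ_x\subset B(x,r)$ to those meeting $B(x,16r)$. But Step~3 has a genuine gap exactly at the chaining step: you never show that the averages of $f^2$ over \emph{adjacent} sub-cubes are comparable.

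Proposition~\ref{prop:C1} applied to a single subdivision only compares $f^2$ at non-exceptional points with the average over their \emph{own} sub-cube. Suppose face-adjacent $\QQ,\PP$ contain non-exceptional points $x_1\in\QQ$, $x_2\in\PP$. Then $f(x_1)^2\asymp\fint_\QQ f^2$ and $f(x_2)^2\asymp\fint_\PP f^2$, but these are two different points and nothing in the one-grid statement links them. Taken by itself, that statement allows $\fint_\QQ f^2$ and $\fint_\PP f^2$ to differ by an arbitrary factor, with the transition in a thin slab around the common face lying in $E$.

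The subharmonicity bound $\sup_\QQ f^2\le C\fint_{2\QQ}f^2$ does not repair this. It only dominates a cube by a larger region containing its neighbours, and never yields the lower bound $\fint_{\QQ_x}f^2\gtrsim\fint_\PP f^2$ that the chain needs. Also, excluding only centres whose $16r$-neighbourhood has exceptional density above $\frac12$ still allows individual sub-cubes there to lie entirely in $E$, where you have no pointwise information.

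The missing idea is a second grid shifted by half a mesh. If $h$ is the mesh, apply Proposition~\ref{prop:C1} also to the subdivision shifted by $\frac h2(1,\ldots,1)$.
\begin{itemize}
\item For face-adjacent $\QQ,\PP$ of the first grid there is a cube $\VC$ of the second grid with $\HH^n(\VC\cap\QQ)=\HH^n(\VC\cap\PP)=2^{-n}\HH^n(\QQ)$.
\item Discard every centre $x$ for which some cube of either grid meeting $x+[-20r,20r]^n$ has $\HH^n(\QQ\cap E)>2^{-n-2}\HH^n(\QQ)$. The mesh is fixed before $\eps$ and $r/h$ is bounded, so this costs measure $O(\eps)$.
\item Both overlaps then contain non-exceptional points, which gives $H_{n,\eps}^{-4}\fint_\PP f^2\le\fint_\QQ f^2\le H_{n,\eps}^{4}\fint_\PP f^2$.
\item A chain of boundedly many such steps, with $\sqrt n\,h<r/2$ (arranged by enlarging $d$), gives $\int_{B(x,16r)}f^2\le\gamma_0\int_{\QQ_x}f^2\le\gamma_0\int_{B(x,r)}f^2$.
\end{itemize}

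Two smaller points. First, the bound $|P_k(\zeta)|\le C_n(1+k)^{C_n}|\zeta|^k\sup_{\Sph^{n-1}}|P_k|$ in Step~1 is false. For $P_k=\re(x_1+ix_2)^k$ and $\zeta=(1,i,0,\ldots,0)$ one gets $P_k(\zeta)=2^{k-1}$, while $|\zeta|^k=2^{k/2}$. The loss is exponential, at most $(\sqrt2)^k$, but the conclusion of Step~1 survives, e.g.\ via the Poisson kernel or \cite[Claim C.4]{LPS}.

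Second, Step~2 asks for more than needed: a measure bound on $\{|f|\le e^{-C'T}M_f(\frac32)\}$. A line-by-line Remez proof of that would also need a lower bound for $f$ on each line, which you do not supply. It suffices, as in the paper, to chain the three-sphere inequality \eqref{eq:three-sphere} over boundedly many small balls. This finds, within a small fixed distance of each patch centre, a point where $|f|\ge e^{-C_1T}M_f(\frac32)$, and you centre the rescaling there.
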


\begin{proof}
{\it Step 1. Fix the geometric scale and propagate a lower bound.} Normalize $M_f(\frac32)=1$, so 
$1\le M_f(4)\le e^{A_0T}$, and there exists $x_*\in \overline{B_{\frac32}}$ such that $|f(x_*)|=1.$ Let $s=s(n)>0$ be sufficiently small, to be fixed in Step 2. Set $b=\frac{s}{32}$, and for $q\in \overline{B_{\frac85}}$ set $M_{f, q}(r)=\sup_{B(q, r)}|f|$ and $$L_{f,q}(r)=\bigl(\int_{\Sph^{n-1}}f(q+r\omega)^2\,d\mu(\omega)\bigr)^{\frac12},\qquad d\mu=\frac{d\sigma}{|\Sph^{n-1}|}.$$ The translated forms of \eqref{eq:three-sphere} and \eqref{eq:poisson-comparison} give
$$
M_{f, q}(2b)\le C_nL_{f,q}(3b)\le C_nL_{f,q}(b)^\theta L_{f,q}(4b)^{1-\theta}
\le C_n M_{f, q}(b)^\theta M_f(4)^{1-\theta}
$$
for $\theta=\frac{\log(\frac43)}{\log4}$. Equivalently,
\begin{equation}\label{6.7a}
M_{f, q}(b)
\ge
c_n M_{f}(4)^{-\frac{1-\theta}{\theta}}
M_{f, q}(2b)^{\frac1\theta}.
\end{equation}
Fix $p\in \overline{B_{\frac85}}$. We can choose points
$q_0=x_*,q_1,\ldots,q_J=p$
such that
$$
|q_{j+1}-q_j|\le b,
\qquad
J\le J(n).
$$
In particular,
$$
B(q_j,b)\subset B(q_{j+1},2b),
$$
and therefore
$$
M_{f, q_{j+1}}(2b)\ge M_{f, q_j}(b).
$$
Applying \eqref{6.7a} successively along the chain gives
$$
M_{f, q_{j+1}}(b)
\ge
c_n M_f(4)^{-\frac{1-\theta}{\theta}}
M_{f, q_j}(b)^{\frac1\theta}\qquad \text{ for }j\ge 0.
$$
We note $M_{f, q_0}(b)\ge|f(x_*)|=1$. Iterating along the bounded chain gives
\begin{equation}\label{sv:eq:patch-lower-short}
 \sup_{B(p,b)}|f|\ge e^{-C_1T},
\end{equation}
where $C_1=C_1(n,A_0)$. We note \eqref{sv:eq:patch-lower-short} holds for any $p\in \overline{B_{\frac85}}$. Cover $\overline {B_{\frac85}}$ by finitely many cubes
$K_\alpha=p_\alpha+[-\frac s8,\frac s8]^n$, with $p_\alpha\in\overline {B_{\frac85}}$, and choose
$a_\alpha\in\overline{B(p_\alpha,b)}$ such that
$|f(a_\alpha)|\ge e^{-C_1T}$. The number of patches depends only on $n$.

{\it Step 2. Construct holomorphic functions on finitely many patches.} By \cite[Claim C.4]{LPS}, there exists $\rho_n>0$ depending only on $n$ such that, for every $\alpha$, $f$ admits a holomorphic extension to
$$\{z\in \C^n: \dist(z, K_\alpha)<\rho_n\},$$
with
$|f^{\mathbb C}|\le C_n M_f(4)$ there.  Let $k=k(n)$ be the integer in Proposition \ref{prop:C1}. We choose $s=s(n)>0$ sufficiently small so that
$$
\left(3^k+\frac1{32}\right)s<\rho_n.
$$
Since $a_\alpha\in \overline{B(p_\alpha,b)}$, our choice of $s=s(n)$ ensures that
$a_\alpha+s z$ remains in this complex neighborhood of $K_\alpha$ when $|z|\le 3^k$. We may therefore define
$$
G_\alpha(z):=
\bigl(f^{\C}(a_\alpha+s z)\bigr)^2,
\qquad |z|\le 3^k.
$$
Then $G_\alpha$ is holomorphic in $\{z\in\C^n: |z|\le 3^k\}$ and is real and
nonnegative for real $z\in \QQ_0=[-1,1]^n$. Moreover,
$$
G_\alpha(0)=f(a_\alpha)^2\ge e^{-2C_1T},
$$
while
$$
\max_{|z|\le 2^k}|G_\alpha(z)|
\le C_n^2 M_f(4)^2
\le C_n^2 e^{2A_0T}.
$$
Fix $A=A(n,A_0)\ge1$ large enough such that
$$
\max_{|z|\le 2^k}|G_\alpha(z)|
\le |G_\alpha(0)|e^{AT}, \qquad A>\sqrt{n}s,
$$
for all $T\ge T_0$, increasing $T_0$ if necessary.
Thus $G_\alpha$ satisfies the growth hypothesis of Proposition \ref{prop:C1} with $d=AT$.

{\it Step 3. Apply the small cube estimate on two overlapping grids.} Set $m=\lceil AT\rceil$ and $h=\frac{s}{2m}$. On each patch use the two
cubes
$$
 \RR_{\alpha,0}=p_\alpha+[-\frac s4, \frac s4]^n,
 \qquad \RR_{\alpha,1}=\RR_{\alpha,0}+\tfrac h2(1,\ldots,1),
$$
and partition each into $m^n$ cubes of side $h$. For sufficiently large
$T$, 
$$\widetilde{\RR}_{\alpha, 0}:=\frac{{\RR}_{\alpha, 0}-a_\alpha}{s}\subset\frac{1}{2}\QQ_0, \qquad \widetilde{\RR}_{\alpha, 1}:=\frac{{\RR}_{\alpha, 1}-a_\alpha}{s}\subset\frac{1}{2}\QQ_0,$$
and the corresponding small cubes in the rescaled variables have side
length
$$
\frac hs=\frac1{2m}\asymp \frac1{AT}.
$$
Thus we can apply Proposition \ref{prop:C1} to $G_\alpha(z)$ in both $\widetilde{\RR}_{\alpha, 0}$ and $\widetilde{\RR}_{\alpha, 1}$. Pulling the resulting estimates back to the original variables and
taking the union of the exceptional sets for the two grids, we obtain,
for every $\eps>0$, there exists a set $E_\alpha$ with
\begin{equation}\label{exceptionvol}
|E_\alpha|\le C_n\eps
\end{equation}
and a constant $H_{n, \eps}=H(n,\eps)\ge1$ such that, for every
small cube $\QQ$ in either grid,
\begin{equation}\label{sv:eq:avg-short}
H_{n, \eps}^{-1}\fint_\QQ f^2\,dx
\le f(x)^2
\le H_{n, \eps} \fint_\QQ f^2\,dx,
\qquad
\text{ for all }x\in \QQ\setminus E_\alpha.
\end{equation}
Notice that the mesh is fixed before $\eps$ is chosen. We now set $r=\frac{L_0}{T}.$
Note that
\begin{equation*}\label{sv:eq:mesh-short}
h=\frac{s}{2\lceil AT\rceil}
\asymp_{n,A_0}T^{-1}.
\end{equation*}
By the choice of $A$ and $L_0\ge1$, we get 
\begin{equation}\label{diamQ}
\diam(\QQ)=\sqrt n\,h<\frac r2, \qquad \frac{r}{h}\le C(n, A_0, L_0).
\end{equation}
Finally, after increasing $T_0$ if necessary, we may assume that
$$
x+[-21r,21r]^n
\subset \RR_{\alpha,0}\cap \RR_{\alpha,1}
\qquad
\text{for every }x\in K_\alpha.
$$
Indeed, $K_\alpha$ has a fixed positive margin inside
$\RR_{\alpha,0}$, while the displacement $\frac h2$ of the second grid and
the scale $r$ both tend to zero as $T\to\infty$.

{\it Step 4. Remove centers close to bad cubes.} Set $\tau=2^{-n-2}$. We call $\QQ$ a bad cube if
$$\HH^{n}(\QQ\cap E_\alpha)>\tau\HH^n(\QQ).$$ By \eqref{exceptionvol}, the total volume of the bad cubes is
$O_n(\varepsilon)$. Exclude also every center $x$ for which a bad cube
meets $x+[-20r,20r]^n$. Since $\frac rh$ is bounded in terms of
$n,A_0,L_0$, the total measure of these excluded centers is at most
$C(n,A_0,L_0)\varepsilon$. Choosing $\varepsilon$ sufficiently small and
summing over the finitely many patches makes this measure at most $\delta$.
For every remaining center $x$, by the definition of the excluded set, every small cube of either grid meeting $x+[-20r,20r]^n$ is good. 

{\it Step 5. Compare averages on adjacent cubes.} Let $\QQ$ and $\PP$ be two face-adjacent small cubes in the subdivision of $\RR_{\alpha,0}$ such that
$$
\QQ\cup \PP\subset x+[-19r,19r]^n.
$$
There exists a small cube $\VC$ in the half-mesh shifted subdivision of
$\RR_{\alpha,1}$ which overlaps both $\QQ$ and $\PP$, with
$$\HH^n(\VC\cap \QQ)=\HH^n(\VC\cap \PP)=2^{-n}\HH^n(\QQ).$$
Moreover, $\VC\subset\bigl(\QQ\cup \PP\bigr)+[-\frac h2,\frac h2]^n$.
By \eqref{diamQ}, we have
$$
\VC\subset x+[-20r,20r]^n.
$$
Hence $\QQ$, $\PP$, and $\VC$ are all good cubes, and
$$\HH^n(\VC\cap \QQ)>\tau\HH^n(\QQ)\ge\HH^n(\QQ\cap E_\alpha),\qquad\HH^n(\VC\cap \PP)>\tau\HH^n(\PP)\ge\HH^n(\PP\cap E_\alpha).$$
So each overlap contains a point outside
$E_\alpha$, and applying \eqref{sv:eq:avg-short} there gives
\begin{equation}\label{ajcompare}
 H_{n,\eps}^{-4}\fint_\PP f^2\,dx
 \le \fint_\QQ f^2\,dx
 \le H_{n, \eps}^{4}\fint_\PP f^2\,dx.
\end{equation}

{\it Step 6. Pass from cubes to the required balls.} Now let $\QQ_x$ be the small cube of the first grid containing $x$.
If a small cube $\QQ$ of the first grid meets $B(x,16r)$, then, by \eqref{diamQ}
we have
$\QQ\subset x+[-17r,17r]^n.$
Hence $\QQ$ can be joined to $\QQ_x$ by a face-adjacent chain
$$\QQ_x=\QQ^0,\QQ^1,\ldots,\QQ^J=\QQ$$
of small cubes of the first grid, all contained in $x+[-17r,17r]^n$. In particular, every adjacent pair in the chain
satisfies the hypothesis of the preceding comparison estimate, and
all cubes involved are good. Moreover,
$J\le C(n,A_0,L_0).$ Iterating the preceding inequality \eqref{ajcompare} along the chain gives
$$
\fint_\QQ f^2\,dx
\le
H_{n,\eps}^{\,4C(n,A_0,L_0)}
\fint_{\QQ_x} f^2\,dx.
$$
Since all small cubes have the same volume and at most
$C(n,A_0,L_0)$ of them meet $B(x,16r)$, summing over these cubes yields
$$
\int_{B(x,16r)} f^2\,dy
\le
\gamma_0\int_{\QQ_x}f^2\,dy,
$$
where $\gamma_0=\gamma_0(n,A_0,L_0,\delta)$ is independent of $T$. Finally, since $\QQ_x\subset B(x,r),$ we have
$$
\int_{B(x,16r)} f^2\,dy
\le
\gamma_0\int_{B(x,r)}f^2\,dy.
$$
After increasing $T_0$ if necessary, we also have $B(x,16r)\subset B_2$ for every $x\in B_{\frac85}.$
This proves \eqref{sv:eq:bad-centers}.
\end{proof}
Next, we prove the lower bound for the volume of the positivity set under the stable growth assumption.
\begin{proposition}\label{sv:prop:stable-sign}
For every $n\ge3$ and $S\ge1$, there are $c_{n,S}>0$ and $T_{1}=T_1(n,S)$ with the following property.
If $f$ is a non-zero harmonic function in $B_4$, which has $S$-stable
growth as in \eqref{eq:stable}, and
$$
\NN_f\bigl(B_{\frac12}\bigr)\ge T_{1},
$$
then
\begin{equation}\label{sv:eq:stable-sign}
\HH^n\bigl(\{f>0\}\cap B_2\bigr)\ge c_{n,S}.
\end{equation}
No vanishing condition at the center is required.
\end{proposition}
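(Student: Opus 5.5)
Normalize $M_f(\frac32)=1$ and write $T=\NN=\NN_f(B_{\frac12})$. The plan has three parts: a lower bound on the logarithmic gradient, a localization of its tail near the zero set, and a local sign count using the mean value property.

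First, the lower bound. The left-hand inequality in \eqref{eq:stable-comparison}, which used only $S$-stable growth and $T$ large (Lemmas \ref{lem:inner-decay}, \ref{lem:directions} and Corollary \ref{cor:separation}), gives
$$\int_{B_{\frac85}}g^{\frac12}\,dx\ge c_{n,S}T^{\frac12},\qquad g=|\nabla\log|f||.$$
Second, the localization. For $x\in B_{\frac85}$ let $d(x)$ be the distance to $\{f=0\}$. Estimate \eqref{eq:gradient-distance} gives $g(x)\le C_n/\min\{\frac1{20},d(x)\}$. Put $G_\lambda=\{x\in B_{\frac85}:d(x)<C_n/(\lambda T)\}$. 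On the complement of $G_\lambda$ we have $g\le\lambda T$ (for $T$ large), so this region contributes at most $\VV(B_{\frac85})(\lambda T)^{\frac12}$. For $\lambda$ small depending on $n,S$, this is at most half of $c_{n,S}T^{1/2}$. Hence
$$\int_{G_\lambda}g^{\frac12}\ge \tfrac12c_{n,S}T^{\frac12}.$$
This alone does not bound $\VV(G_\lambda)$ from below, because $g^{1/2}$ could concentrate on a tiny set. To get volume, I would use the stronger integrability exponent from Proposition \ref{prop:gradient-upper}. Apply it with $q=\frac34$ (this needs only nodal volume, which is at most $C_nT$ by Theorem \ref{ndupper} and Lemma \ref{lem:outer-growth}). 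It gives $\int g^{3/4}\le C(1+C_nT)^{3/4}\le C T^{3/4}$. Hölder's inequality then yields
$$\tfrac12 c\,T^{\frac12}\le \VV(G_\lambda)^{\frac13}\Bigl(\int g^{\frac34}\Bigr)^{\frac23}\le C\,\VV(G_\lambda)^{\frac13}T^{\frac12}.$$
So $\VV(G_\lambda)\ge c'_{n,S}>0$. In words: a set of positive measure lies within distance $r_0:=C_n/(\lambda T)$ of the zero set.

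Third, the local sign count. Apply Lemma \ref{sv:lem:fixed-scale} with $A_0=C_{n,S}$ from Lemma \ref{lem:outer-growth}, with $L_0$ chosen so that $r=L_0/T$ satisfies $r\ge r_0$, and with $\delta=c'_{n,S}/2$. Then the good set
$$G'=\Bigl\{x\in G_\lambda:\int_{B(x,16r)}f^2\le\gamma_0\int_{B(x,r)}f^2\Bigr\}$$
has measure at least $c'/2$. For $x\in G'$ pick a zero $z$ with $|x-z|<r$. Then $B(z,2r)\subset B(x,3r)$ and $B(z,4r)\subset B(x,16r)$. I expect the main obstacle to be transferring the doubling from balls centered at $x$ to balls centered at $z$. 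The route I would try: $\int_{B(z,4r)}f^2\le\gamma_0\int_{B(x,r)}f^2\le\gamma_0\int_{B(z,2r)}f^2$. Lemma \ref{sv:lem:balance} applied on $B(z,2r)$ with $d_0=\gamma_0$ then gives $\HH^n(\{f>0\}\cap B(z,2r))\ge c\,r^n$, hence $\HH^n(\{f>0\}\cap B(x,3r))\ge c\,r^n$.

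Finally, a Vitali covering of $G'$ by balls $B(x,3r)$ with $x\in G'$ extracts disjoint balls whose $5$-fold enlargements cover $G'$. There are at least $c\,r^{-n}\VV(G')$ of them, each contains positivity volume $\ge c\,r^n$, and all lie in $B_2$. Summing gives $\HH^n(\{f>0\}\cap B_2)\ge c_{n,S}$.
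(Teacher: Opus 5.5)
Your proposal is correct and follows essentially the same route as the paper. The paper also plays the $q=\frac12$ lower bound from \eqref{eq:stable-comparison} against the $q=\frac34$ upper bound from Proposition \ref{prop:gradient-upper} via H\"older, obtaining a set of measure $\gtrsim_{n,S} 1$ within distance $O(1/\NN)$ of the zero set. It then removes the exceptional centers of Lemma \ref{sv:lem:fixed-scale}, transfers the doubling to a nearby zero before applying Lemma \ref{sv:lem:balance}, and sums over a disjoint ball family. The differences are only cosmetic: you define the near-zero set by distance rather than by $g\ge a\NN$, and you select balls centered at $x$ rather than at the zero $z(x)$.
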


\begin{proof}
As in the proof of Proposition \ref{prop:gradient-upper}, in $B_{\frac85}$ set $g={|\nabla f|}{|f|}^{-1}$ off the zero set, extended by zero
on the zero set. By Theorem \ref{ndupper} and stable growth,
$$
\HH^{n-1}(\{f=0\}\cap B_2)
\le C_n\NN_f(B_2)\le C_nS\NN_f\bigl(B_{\frac12}\bigr).
$$
Consequently, Proposition~\ref{prop:gradient-upper} with $q=\frac34$ and the left hand side inequality of \eqref{eq:stable-comparison} give, for $\NN_f\bigl(B_{\frac12}\bigr)$ sufficiently large,
\begin{equation*}\label{sv:eq:moments}
\int_{B_{\frac85}}\left(\frac{g}{\NN_f\bigl(B_{\frac12}\bigr)}\right)^{\frac12}\,dx\ge c_{0},
\qquad
\int_{B_{\frac85}}\left(\frac{g}{\NN_f\bigl(B_{\frac12}\bigr)}\right)^{\frac34}\,dx\le C_{0},
\end{equation*}
where $c_{0}$ and $C_{0}$ are positive constants depending only on $n$ and $S$.

Choose $a>0$ so that $\sqrt{a}\VV\bigl(B_{\frac 85}\bigr)\le \frac{c_{0}}{2}$, and set
$$E=\left\{x\in B_{\frac 85}:g(x)\ge a\NN_f\bigl(B_{\frac12}\bigr)\right\}.$$ By H\"older's inequality, we have
$$
\frac{c_{0}}{2}\le\int_E\left(\frac{g}{\NN_f\bigl(B_{\frac12}\bigr)}\right)^{\frac12}\,dx
\le C_{0}^{\frac23}(\HH^n(E))^{\frac13},
\qquad \HH^n(E)\ge\eta:=\frac{c_{0}^3}{8C_{0}^2}>0.
$$
By \eqref{eq:gradient-distance}, after fixing $L_0=L_0(n,S)\ge1$ sufficiently large and increasing
the threshold $T_1$, every $x\in E$ has a zero of $f$ in $B(x,r)$,
where $r={L_0}\bigl({\NN_f\bigl(B_{\frac12}\bigr)}\bigr)^{-1}$. Lemma \ref{lem:outer-growth} gives $$M_f(4)\le e^{A_0\NN_f(B_{\frac12})}M_f\bigl(\frac32\bigr)$$ with
$A_0=A_0(n,S)\ge1$. Apply Lemma \ref{sv:lem:fixed-scale} with these
$A_0,L_0$ and $\delta=\frac\eta2$. Removing its exceptional centers from $E$
leaves a set $E'$ such that
\begin{equation*}\label{sv:eq:good-large-set}
\HH^n(E')\ge\frac\eta2,
\qquad
\int_{B(x,16r)}f^2\,dy\le \gamma_0\int_{B(x,r)}f^2\,dy
\qquad\text{for all } x\in E'.
\end{equation*}
For $x\in E'$, choose a zero $z=z(x)\in B(x,r)$. Since $f(z)=0$,
$$
B(x,r)\subset B(z,2r),
\qquad B(z,4r)\subset B(x,16r),
$$
we have $$\int_{B(z,4r)}f^2\,dx\le \gamma_0\int_{B(z,2r)}f^2\,dx.$$
Lemma \ref{sv:lem:balance} therefore yields
\begin{equation}\label{sv:eq:local-sign}
\HH^n\bigl(\{ f>0\}\cap B(z,2r)\bigr)\ge c_{n,S}r^n.
\end{equation}
Let $$\mathcal{F}=\{B(z(x), 2r): x\in E'\}.$$
Choose a maximal pairwise disjoint subfamily $B(z_i,2r)$,
$1\le i\le J$, of $\mathcal{F}$. Maximality and the choice of $z$ give
$E'\subset\bigcup_iB(z_i,5r)$, and hence
\begin{equation}\label{Jnumber}
Jr^n\ge\frac{\HH^n(E')}{5^n\omega_n}\ge c_{n,S}>0.
\end{equation}
For sufficiently large $\NN_f\bigl(B_{\frac12}\bigr)$, all selected balls lie in $B_2$.
Summing \eqref{sv:eq:local-sign} over the disjoint balls, and using \eqref{Jnumber}, we get \eqref{sv:eq:stable-sign}.
\end{proof}

Now we are ready to prove Theorem \ref{sv:thm:main}.
\begin{proof}[Proof of Theorem~\ref{sv:thm:main}]
Let $f(y)=u(\frac y4)$. Recall $\NN=\NN_u\bigl(B_{\frac12}\bigr)$. It suffices to consider the case $\NN<\infty$. Then $f$ is harmonic in $B_4$, $f(0)=0$, and
$$
\NN=\log_2\frac{M_f(4)}{M_f(2)}.
$$
For the function $f$, let $L_f$ and $\beta_f$ denote the spherical $L^2$-mean and frequency function introduced in Section \ref{sec:stable}.
Equations \eqref{eq:frequency-monotonicity}, \eqref{eq:frequency-ratio}, and \eqref{eq:poisson-comparison} imply
$$
\beta_f\bigl(\frac{19}{10}\bigr)\log\bigl(\frac{7}{5}\bigr)
\le\log\frac{L_f\bigl(\frac{7}{2}\bigr)}{L_f\bigl(\frac{5}{2}\bigr)}
\le\log\frac{C_nM_f(4)}{M_f(2)}.
$$
In particular,
\begin{equation}\label{sv:eq:frequency-upper}
\beta_f(\frac{19}{10})\le C_n(\NN+1).
\end{equation}
Let $L_n>0$, $S_n\ge1$, and $c_n>0$ be the dimensional constants
in Lemma \ref{lem:stable-ball}, and let $T_1=T_1(n, S_n)$ be the constant in Proposition \ref{sv:prop:stable-sign}.  Fix the dimensional constant $Q\ge1$ sufficiently large so that
$$
eQ\ge L_n,
\qquad
50c_nQ\ge T_1.
$$
The first condition will ensure that every retained frequency layer
satisfies the scale hypothesis of Lemma \ref{lem:stable-ball}, while the second will ensure that the stable ball produced by that lemma has sufficiently
large inner doubling index to apply Proposition \ref{sv:prop:stable-sign}. Fix
$$
v_*\ge\max\left\{1, \frac{10^4Q}{(1-\sqrt{10}^{-1})^2}\right\},\qquad
v_0:=\beta_f\bigl(\frac{11}{10}\bigr).
$$
We distinguish the cases $v_0\le v_*$ and $v_0>v_*$.

{\it Case 1: $v_0\le v_*$.} By the monotonicity of $\beta_f$ and \eqref{eq:frequency-ratio},
$$
\log\frac{L_f(2t)}{L_f(t)}
=
\int_t^{2t}\frac{\beta_f(s)}{s}\,ds
\le v_*\log 2,
\qquad 0<t\le \frac12,
$$
and therefore
$$
L_f(2t)\le 2^{v_*}L_f(t).
$$
Using polar coordinates and the definition of $L_f$, we obtain
$$
\begin{aligned}
\int_{B_1} f^2\,dy
&=
|\Sph^{n-1}|
\int_0^1 \rho^{n-1}L_f(\rho)^2\,d\rho=
|\Sph^{n-1}|\,2^n
\int_0^{\frac12} t^{n-1}L_f(2t)^2\,dt \\
&\le
2^{n+2v_*}
|\Sph^{n-1}|
\int_0^{\frac12} t^{n-1}L_f(t)^2\,dt=
2^{n+2v_*}
\int_{B_{\frac12}} f^2\,dy.
\end{aligned}
$$
Since $f(0)=0$, Lemma \ref{sv:lem:balance} applied with
$B=B_{\frac12},$ and $d_0=2^{n+2v_*},$
gives
\begin{equation}\label{smallv0}
\HH^n\bigl(\{y\in B_{\frac12}: f(y)>0\}\bigr)
\ge c_n>0.
\end{equation}
By $u(0)=0$ and \cite[Claim A.4]{LPS}, the doubling index $\NN$ is bounded below by a positive dimensional constant. Hence, scaling back to $u$, \eqref{smallv0} yields the desired logarithmic lower bound in Theorem \ref{sv:thm:main}.

{\it Case 2: $v_0>v_*$.} We use the frequency layer construction for $f$ as in Steps 1--3 of the proof of Theorem \ref{thm:main-2}, specifically \eqref{eq:frequency-layers}--\eqref{eq:rescaled-indices}, with the above choice of $Q$. 
For $j=0,1,\ldots$, set
$$
v_j=10^jv_0,
\qquad
I_j=
\left\{
t\in\bigl[\frac{11}{10},\frac{19}{10}\bigr]:
v_j\le\beta_f(t)<10v_j
\right\}.
$$
Discard all intervals of zero length, and retain the original indices. Let $m$ be the number of remaining positive length layers. If $|I_j|>0$, then $v_j\le\beta_f\bigl(\frac{19}{10}\bigr)$. Thus \eqref{sv:eq:frequency-upper} bounds the number
$m$ of positive length layers by
\begin{equation}\label{sv:eq:layer-count}
m\le1+\log_{10}\frac{C_n(1+\NN)}{v_0}
\le C_n\log(1+\NN).
\end{equation}

After discarding the zero length intervals, let $\rho_j$ be the midpoint of
$I_j$ and set
$$
w_j=\frac{|I_j|}{40},
\qquad
\mathcal{S}_j=
\left\{
x:
\rho_j-10w_j\le |x|\le\rho_j+10w_j
\right\}.
$$
As in the proof of Theorem \ref{thm:main-2},
\begin{equation}\label{eq:layer-properties-pos}
0<w_j\le\frac1{50},
\qquad
\sum_jw_j=\frac1{50},
\end{equation}
and
$$
v_j\le\beta_f(t)\le10v_j,
\qquad
t\in[\rho_j-10w_j,\rho_j+10w_j].
$$
The shells $\mathcal{S}_j$ are pairwise separated and contained in $B_2\setminus B_1$.

Retain the indices
$$
\mathcal J_f=\{j:v_jw_j^2\ge Q\}.
$$
For $j\notin\mathcal J_f$,
$$
w_j
<
\sqrt{\frac{Q}{v_j}}
=
\sqrt{\frac{Q}{v_0}}\,10^{-\frac j2}.
$$
Hence, by the choice of $v_*$,
$$
\sum_{j\notin\mathcal J_f}w_j
\le
\sqrt{\frac{Q}{v_0}}
\sum_{j=0}^\infty10^{-\frac j2}
\le\frac1{100}.
$$
Together with \eqref{eq:layer-properties-pos}, this gives
\begin{equation}\label{eq:retained-width-pos}
\sum_{j\in\mathcal J_f}w_j\ge\frac1{100}.
\end{equation}

For every retained index $j\in\mathcal{J}_f$, we have
$$
v_jw_j^2\ge Q,
\qquad
0<w_j\le\frac1{50},
$$
and hence
$$
v_j w_j\ge \frac{Q}{w_j}\ge 50Q,
$$
while
$$
\frac{v_j w_j}{\log\bigl(\frac1{w_j}\bigr)}
\ge
\frac{Q}{w_j\log\bigl(\frac1{w_j}\bigr)}
\ge eQ
\ge L_n.
$$
These inequalities verify the hypotheses of Lemma \ref{lem:stable-ball}. Therefore, for every retained $j\in\mathcal{J}_f$, there exists a ball $D_j=B(x_j,w_j)$ such that $4D_j\subset \mathcal{S}_j,$ and
\begin{equation}\label{sv:conseoflem51}
\NN_f\bigl(\frac{D_j}{2}\bigr)\ge c_nv_jw_j
\ge50c_nQ
\ge T_1,
\qquad
\NN_f(2D_j)\le S_n\NN_f\bigl(\frac{D_j}{2}\bigr).
\end{equation}
The balls $4D_j$, $j\in\mathcal J_f$, are pairwise disjoint because the shells $\mathcal{S}_j$ are pairwise separated.

Apply Proposition~\ref{sv:prop:stable-sign} to
$\widetilde{f_j}(y)=f(x_j+w_jy)$. The hypotheses follow from \eqref{sv:conseoflem51} and $\overline{4D_j}\subset B_2$.
After scaling back, 
$$
\HH^n\bigl(\{f>0\}\cap2D_j\big)\ge c_nw_j^n,\qquad j\in\mathcal J_f.
$$
Disjointness, H\"older's inequality,  \eqref{sv:eq:layer-count}, \eqref{eq:retained-width-pos} and \eqref{sv:conseoflem51} now yield
\begin{align}
\HH^n\bigl(\{f>0\}\cap B_2\big)
&\ge c_n\sum_{j\in\mathcal J_f}w_j^n\notag\ge c_n (\#\mathcal J_f)^{1-n}
 \left(\sum_{j\in\mathcal J_f}w_j\right)^n\\
&\ge c_n m^{1-n}
 \left(\sum_{j\in\mathcal J_f}w_j\right)^n\ge{c_n}{\bigl(\log(1+\NN)\bigr)^{1-n}}.\label{sv:eq:holder-volume}
\end{align}
Finally, under $x=\frac y4$, the ball $B_2$ maps onto $B_{\frac12}$ and the volume is multiplied by $4^{-n}$. This proves \eqref{sv:eq:main}.
\end{proof}

In contrast to \eqref{eq:local-layer-volume}, the contribution of $D_j$ to the volume of the positivity set is of order $w_j^n$, without the
additional frequency factor $v_j$. Consequently, the unweighted
summation in \eqref{sv:eq:holder-volume} retains the factor
$(\#\mathcal J_f)^{1-n}$. Since the number of retained layers is bounded
by $C_n\log(1+\mathcal N)$, this produces the logarithmic factor in
\eqref{sv:eq:main}.

We conclude this section by constructing examples showing that the logarithmic order in Theorem \ref{sv:thm:main} is sharp, up to dimensional constants. The construction below starts from the entire function used by Nazarov, Polterovich and
Sodin in \cite[Section 6.1]{NazarovPolterovichSodin2005}. We adapt their example to higher dimensions by spherical averaging, which produces a harmonic function in $\mathbb R^n$. A contour deformation argument then controls the resulting average outside a fixed cylinder. This higher-dimensional
step is essential for obtaining the exponent $1-n$. We note that simply extending the planar example independently of the remaining variables would yield only the exponent $-1$.

\begin{proposition}\label{sv:prop:sharpness}
For every $n\ge3$, there are a constant $C_n>0$ and non-zero harmonic functions $u_R$ in $\R^n$, indexed by sufficiently large real numbers $R$, such that $u_R(0)=0$,
$$\NN_R:=\NN_{u_R}\bigl(B_{\frac12}\bigr)\longrightarrow\infty,$$
and
\begin{equation}\label{sv:eq:sharpness-upper}
 \HH^n\bigl(\{u_R>0\}\cap B_{\frac12}\bigr)
 \le {C_n}{\bigl(\log(1+\NN_R)\bigr)^{1-n}}.
\end{equation}
\end{proposition}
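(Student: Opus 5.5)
Following the hint in the text, I would start from the Nazarov--Polterovich--Sodin planar example: an entire function $F_R$, real on the real axis, whose harmonic function $\re F_R$ (or a real function built from it) is negative in the unit disc except on a thin set. For the higher-dimensional example it is more convenient to use a one-variable entire function $\phi_R(t)$ that is real on $\R$ and satisfies three conditions: $\phi_R(0)=0$; $\phi_R<0$ on $[-1,1]$ except on an interval of length about $\ell:=1/\log R$ near the origin; and $\log$ of its maximum on $\D_1$ over its maximum on $\D_{1/2}$ is of size $R^{O(1)}$, so that $\log\NN\asymp\log R\asymp \ell^{-1}$. An example is $\phi_R(t)=-\exp(R\,h(t))$ corrected near $0$ by an entire bump whose positive part is confined to $|t|\lesssim \ell$. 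I would then produce a harmonic function on $\R^n$ by spherical averaging:
\[
u_R(x)=\int_{\Sph^{n-1}}\phi_R\bigl(x\cdot\omega+i\,x\cdot\omega^\perp\bigr)\,d\mu(\omega),
\]
or, more concretely, the axially symmetric harmonic extension
\[
u_R(x',x_n)=\frac{1}{\pi}\int_0^\pi \phi_R\bigl(x_n+i|x'|\cos\theta\bigr)\sin^{n-3}\theta\,d\theta
\]
(suitably normalized). This is the classical formula making $u_R$ harmonic in $\R^n$ with $u_R(0,x_n)=\phi_R(x_n)$. It gives $u_R(0)=\phi_R(0)=0$, and $u_R$ is clearly non-zero.

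The doubling index is handled by crude bounds. On the axis, $\sup_{B_1}|u_R|\ge\sup_{[-1,1]}|\phi_R|$. Conversely, $\sup_{B_{1/2}}|u_R|\le\sup_{|z|\le 1/2}|\phi_R(z)|$. Both are exponentials in $R^{O(1)}$ with different rates, so $\NN_R\asymp R^{a}$ for some $a>0$, and therefore $\log(1+\NN_R)\asymp\log R$.

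The main step is to show that $\{u_R>0\}\cap B_{1/2}$ lies in the cylinder $\{|x'|\le C\ell,\ |x_n|\le C\ell\}$, whose volume is $\asymp\ell^{n-1}\cdot\ell$. Since the bound we actually need is only $\ell^{n-1}$, it suffices to prove the cruder inclusion in $\{|x'|\le C\ell\}\cap B_{1/2}$, which has volume $\lesssim\ell^{n-1}\asymp(\log(1+\NN_R))^{1-n}$. The positivity set in the $x_n$ direction need not be thin, and that is fine. Outside this cylinder, i.e.\ for $|x'|\ge C\ell$, I would deform the $\theta$-contour, writing $\cos\theta$ as a complex variable along the segment from $-1$ to $1$, into a path on which $\phi_R(x_n+i|x'|w)$ is dominated by a negative quantity of size $-e^{R\,\re h}$. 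This needs $h$ chosen so that $\re h(x_n+iy)$ has a strict maximum on the real slice near the relevant point, and the deviation of size $|x'|\ge C\ell$ then produces Gaussian-type cancellation. The endpoint weight $\sin^{n-3}$ enters here. I expect this contour deformation to be the main obstacle: one needs a uniform sign of the dominant saddle contribution, and the positive bump must be shown to be negligible once $|x'|\gtrsim\ell$. If that does not work directly, I would instead choose $\phi_R(t)=-\cosh(\lambda t)+\epsilon$-type functions with $\lambda\asymp$ polynomial in $R$, for which the averaged function has an explicit Bessel-type form, $u=-\Gamma\,(\lambda|x'|)^{-(n-3)/2}I_{(n-3)/2}$-type times $\cosh$, and read off the sign directly.
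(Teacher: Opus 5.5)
Your framework matches the paper's. Your axially symmetric average $\frac1\pi\int_0^\pi\phi(x_n+i|x'|\cos\theta)\sin^{n-3}\theta\,d\theta$ is, up to normalization, the paper's $H_n(t,y)=\int_{\Sph^{n-2}}\EE(t+i\,y\cdot\omega)\,d\mu(\omega)$. (Your first formula over $\Sph^{n-1}$ is garbled; the average is over $\Sph^{n-2}$ in the $x'$ variables.) The crude sup bounds for the doubling index are also the same, and so is the observation that only thinness in $x'$ matters.

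However, the step you flag as the main obstacle is the whole content of the proposition, and the mechanism you propose for it fails. For a profile $h$ independent of $R$, $\exp(Rh(t+is))$ has modulus varying on the scale $(Rh''(t))^{-1/2}$ and phase $\approx Rh'(t)s$ rotating at rate $R$. Nothing produces the scale $1/\log R$, and the sign of the average is governed by the phase, not by $-e^{R\re h}$. The endpoint contributions at $\cos\theta=\pm1$, of modulus $e^{R\re h(x_n\pm i|x'|)}$, carry this rotating phase. For $h(z)=z$, the average of $-e^{Rz}$ is exactly $-e^{Rx_n}\Gamma(\nu+1)(R|x'|/2)^{-\nu}J_\nu(R|x'|)$ with $\nu=\frac{n-3}{2}$. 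This is positive on shells of spacing $\sim R^{-1}$ that fill a fixed proportion of $B_{\frac12}\cap\{x_n>\frac14\}$, and a correction of size $e^{o(R)}$ on $\D_{\frac12}$ cannot change that.

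In fact, taken at face value your Gaussian mechanism would already fix the sign at width $(Rh'')^{-1/2}$. With $\NN\sim R$, that gives a positivity volume that is a negative power of $\NN$, which Theorem~\ref{thm:main-posi} forbids. Your fallback is also miscomputed. The axial extension of $\cosh(\lambda t)$ is $\cosh(\lambda t)\,\Gamma(\nu+1)(\lambda r/2)^{-\nu}J_\nu(\lambda r)$, with the oscillating $J_\nu$. The $I_\nu$ profile belongs to $\cos(\lambda t)$, which itself changes sign in $t$. Either way the positive set occupies a fixed proportion of the ball.

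The missing ingredient is an entire function that is uniformly bounded off a single half-strip while growing enormously inside it. The paper uses the Nazarov--Polterovich--Sodin function $\EE$, symmetrized to be real on $\R$. It satisfies $|\EE|\le C_0$ off $\Pi_+=\{\re z\ge0,\ |\im z|\le\frac\pi2\}$ and $|\EE-e^{e^z}|\le C_0$ on $\Pi_+$. The paper then sets $u_R(x)=H_n(R+2Rx_1,2Rx')-\EE(R)$.

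The tube radius $R^{-1}$ is the rescaled strip width. The double-exponential growth gives $\log M_R(1)=e^{3R}+o(1)$ and $\log M_R(\frac12)=e^{2R}+o(1)$, hence $\log(1+\NN_R)=3R+O(1)$. In your notation, the exponent $Rh$ must be replaced by something like $e^{c(1+2z)\log R}$, not $R$ times a fixed profile. Even the raw double exponential is not enough: $e^{e^z}$ is large again on every strip $|\im z-2\pi k|<\frac\pi2$, which would reproduce positive shells. That is why the correction to a function bounded outside one half-strip is essential.

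The sign off the tube comes from subtracting a huge constant, not from a saddle point. The segment $[t-ir,t+ir]$ does cross $\Pi_+$, where $\EE$ is enormous. Cauchy's theorem replaces it by the path $t-ir\to-1-ir\to-1+ir\to t+ir$, which avoids $\Pi_+$ once $r\ge2$. This gives $|H_n(t,y)|\le C_n(1+t+|y|)^{n-2}$; for $n=3$, where the weight is $(r^2+(z-t)^2)^{-1/2}$, a small-disk argument at the vertices is needed. That bound is negligible against $\EE(R)=e^{e^R}+O(1)$. Hence $\{u_R>0\}\cap B_{\frac12}\subset\{|x'|<R^{-1}\}$, of volume at most $\omega_{n-1}R^{1-n}$.

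Note also that in this example $u_R$ is positive along the whole half-axis $x_1>0$. So your requirement that $\phi_R$ be negative on $[-1,1]$ except near $0$ is neither needed nor what actually happens.
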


\begin{proof}
\emph{Step 1. An entire function concentrated in a half strip.}
Set
$$
 \Pi_+=\{z\in\C:\re z\ge0,\ |\im z|\le\frac\pi2\}.
$$
The construction in \cite[Section 6.1, equations (6.3)--(6.4)]{NazarovPolterovichSodin2005}
gives an entire function $\EE$ and an absolute constant $C_0$ such that
\begin{equation}\label{sv:eq:sharpness-entire-input}
 \begin{aligned}
 |\EE(z)|&\le C_0 &&(z\notin\Pi_+),\\
 |\EE(z)-e^{e^z}|&\le C_0 &&(z\in\Pi_+).
 \end{aligned}
\end{equation}
Replacing $\EE(z)$ by
$\tfrac12\bigl(\EE(z)+\overline{\EE(\bar z)}\bigr)$ preserves these estimates and ensures $\EE(\bar z)=\overline{\EE(z)}$. In particular, $\EE$ is real on the real axis. By \eqref{sv:eq:sharpness-entire-input} we have
\begin{equation}\label{sv:eq:sharpness-entire-growth}
 \begin{gathered}
\EE(t)=e^{e^t}+O(1)\quad(t\longrightarrow+\infty),\\
 |\EE(t+is)|\le e^{e^t}+C_0\quad(t,s\in\R).
 \end{gathered}
\end{equation}
For the second inequality, use
$|e^{e^{t+is}}|=e^{e^t\cos s}\le e^{e^t}$ inside $\Pi_+$, and the first bound in \eqref{sv:eq:sharpness-entire-input} outside $\Pi_+$.

\smallskip
\emph{Step 2. A real-valued harmonic average in $\R^n$.} Write $(t,y)\in\R\times\R^{n-1}$, and let $d\mu$ be normalized surface measure on $\Sph^{n-2}\subset\R^{n-1}$. Define
\begin{equation*}\label{sv:eq:sharpness-harmonic-average}
 H_n(t,y)=\int_{\Sph^{n-2}}\EE(t+i\,y\cdot\omega)\dd\mu(\omega).
\end{equation*}
The change of variables $\omega\mapsto-\omega$ and the symmetry of $\EE$ show that $H_n$ is real-valued. Differentiation under the integral is
valid on every compact set, and, for each $\omega\in\Sph^{n-2}$,
$$
 \Delta_{t,y}\EE(t+i\,y\cdot\omega)
 =\left(1-\sum_{j=1}^{n-1}\omega_j^2\right)
 \EE''(t+i\,y\cdot\omega)=0.
$$
Thus $H_n$ is harmonic in $\R^n$. Moreover,
\begin{equation}\label{sv:eq:sharpness-average-growth}
 H_n(t,0)=\EE(t),\qquad
 |H_n(t,y)|\le e^{e^t}+C_0\quad(t\in\R,\ y\in\R^{n-1}).
\end{equation}

\smallskip
\emph{Step 3. A polynomial bound outside a fixed cylinder.}
We claim that
\begin{equation}\label{sv:eq:sharpness-cylinder-bound}
 |H_n(t,y)|\le C_n(1+t+|y|)^{n-2}
 \qquad(t\ge0,\ |y|\ge2).
\end{equation}
By rotational invariance, $H_n(t,y)$ depends on $y$ only through $r=|y|$. Set $$\alpha_n=\bigl(\int_{-1}^1(1-s^2)^{\frac{n-4}{2}}\,d s\bigr)^{-1}.$$
The spherical integration formula gives
\begin{align*}
 H_n(t,y)
 &=\alpha_n\int_{-1}^1 \EE(t+irs)(1-s^2)^{\frac{n-4}{2}}\,d s\notag\\
 &=\frac{\alpha_n}{i\,r^{n-3}}
   \int_{t-ir}^{t+ir}\EE(z)\bigl(r^2+(z-t)^2\bigr)^{\frac{n-4}{2}}\dd z.
 \label{sv:eq:sharpness-vertical-integral}
\end{align*}
On the open vertical segment the power is the positive real power.

Consider the rectangle with vertices $t\pm ir$ and $-1\pm ir$. In its interior,
$$
 \re\bigl(r^2+(z-t)^2\bigr)
 =r^2+(\re z-t)^2-(\im z)^2>0.
$$
Hence, in the interior of the rectangle, we may define
$$
\bigl(r^2+(z-t)^2\bigr)^{\frac{n-4}{2}}
=
e^{
{\frac{n-4}{2}}\mathrm{Log}\bigl(r^2+(z-t)^2\bigr)},
$$
using the holomorphic logarithm on the right half-plane. Since $r^2+(z-t)^2$ does not vanish on any open edge, this branch extends holomorphically across each open edge. On the open right edge
$z=t+is$, $|s|<r$, it agrees with the positive real value $(r^2-s^2)^{\frac{n-4}{2}}.$ Let $\Gamma_{t,r}$ be the path
$$
 t-ir\ \longrightarrow\ -1-ir\ \longrightarrow\ -1+ir
 \ \longrightarrow\ t+ir.
$$
Cauchy's theorem gives
\begin{equation*}\label{sv:eq:sharpness-deformed-integral}
 H_n(t,y)=\frac{\alpha_n}{i\,r^{n-3}}
 \int_{\Gamma_{t,r}}\EE(z)\bigl(r^2+(z-t)^2\bigr)^{\frac{n-4}{2}}\,d z.
\end{equation*}
The only zeros of the expression inside the power
are the right vertices $t\pm ir$, and both are simple. One may first
remove small disks of radius $\eps$ about these vertices and
then let $\eps\downarrow0$. The added arc integrals are
$O_{t,r}(\eps^{\frac{n-4}{2}+1})$ and tend to zero because
$\frac{n-4}{2}\ge-\frac12>-1$. This justifies the deformation also when $n=3$;
the limit is taken for fixed $t,r$ before the estimates below.

All of $\Gamma_{t,r}$ lies outside $\Pi_+$, since $r\ge2>\frac\pi 2$ and the left edge has real part $-1$. Therefore $|\EE|\le C_0$ on this path.
Put $\ell=t+1\ge1$. When $n\ge4$, one has $\frac{n-4}{2}\ge0$, the path length is $2(\ell+r)$, and
$$
 |r^2+(z-t)^2|\le \ell^2+2r^2\le2(\ell+r)^2
 \qquad(z\in\Gamma_{t,r}).$$
It follows that
$$
 |H_n(t,y)|\le C_n\frac{(\ell+r)^{n-3}}{r^{n-3}}
 \le C_n(1+t+r)^{n-2}.
$$
When $n=3$, the power is $-\frac12$. In what follows, let $C$ be an absolute constant that may vary from line to line. Parametrize either horizontal edge by
$z=t-s\pm ir$, $0\le s\le \ell$. Then
$$
 |r^2+(z-t)^2|=|s^2\mp2irs|\ge2rs,
$$
so each horizontal edge contributes at most
$C\int_0^\ell(rs)^{-\frac12}\dd s\le C\sqrt{\frac\ell r}$. On the left edge $z=-1+i\tau$, $|\tau|\le r$, one has
$$
 \re\bigl(r^2+(z-t)^2\bigr)=r^2+\ell^2-\tau^2\ge \ell^2.
$$
That edge contributes at most $\frac{Cr}{\ell}$. As $r^{n-3}=1$ in this case,
$$
 |H_3(t,y)|\le C\bigl(\sqrt{\frac\ell r}+\frac r\ell\bigr)
 \le C(\ell+r).
$$
This proves \eqref{sv:eq:sharpness-cylinder-bound} in every dimension
$n\ge3$.

\smallskip
\emph{Step 4. The positive set and the actual doubling index.}
For $R$ sufficiently large and $x=(x_1,x')\in\R\times\R^{n-1}$, set
\begin{equation*}\label{sv:eq:sharpness-example}
 u_R(x)=H_n(R+2Rx_1,2Rx')-\EE(R).
\end{equation*}
This function is real-valued and harmonic in $\R^n$, and $u_R(0)=0$.
It is non-zero, since its restriction to the $x_1$-axis is
$\EE(R+2Rx_1)-\EE(R)$ and $\EE$ is nonconstant.
For $x\in B_{\frac12}$ one has
$$
 0<R+2Rx_1<2R,\qquad |2Rx'|<R.
$$
If $|x'|\ge R^{-1}$, \eqref{sv:eq:sharpness-cylinder-bound} gives
$$
 |H_n(R+2Rx_1,2Rx')|\le C_n(1+3R)^{n-2}.
$$
In contrast, \eqref{sv:eq:sharpness-entire-growth} implies
$\EE(R)=e^{e^R}+O(1)$. Thus, for all sufficiently large $R$,
\begin{equation*}\label{sv:eq:sharpness-positive-cylinder}
 \{x\in B_{\frac12}:u_R(x)>0\}
 \subset\{x\in B_{\frac12}:|x'|<R^{-1}\}.
\end{equation*}
Then, we obtain
\begin{equation}\label{sv:eq:sharpness-positive-volume}
 \HH^n\bigl(\{x\in B_{\frac12}:u_R(x)>0\}\bigr)
 \le\omega_{n-1}R^{1-n}.
\end{equation}

It remains to relate $R$ to the doubling index, rather than regard $R$
as a growth parameter without checking this relation. For
$\rho\in\{\frac12,1\}$, put $M_R(\rho)=\sup_{B_\rho}|u_R|$.
The upper bound in \eqref{sv:eq:sharpness-average-growth} gives
$$
 M_R(\rho)\le e^{e^{(1+2\rho)R}}+C_0+|\EE(R)|.
$$
By continuity, approaching the boundary point $(\rho,0)$ along the
positive axis also gives
$$
 M_R(\rho)\ge \EE((1+2\rho)R)-\EE(R).
$$
Consequently, for $\rho\in\{\frac 12,1\}$,
$$
 M_R(\rho)=e^{e^{(1+2\rho)R}}+O(e^{e^R}+1),
$$
and hence
\begin{equation*}\label{sv:eq:sharpness-suprema}
 \log M_R(\frac12)=e^{2R}+o(1),\qquad
 \log M_R(1)=e^{3R}+o(1).
\end{equation*}
In particular,
\begin{equation}\label{sv:eq:sharpness-doubling}
 \NN_R=\frac{e^{3R}-e^{2R}+o(1)}{\log2},\qquad
 \log(1+\NN_R)=3R+O(1).
\end{equation}
Combining \eqref{sv:eq:sharpness-positive-volume} and \eqref{sv:eq:sharpness-doubling} proves
\eqref{sv:eq:sharpness-upper}.
\end{proof}

\begin{remark}\label{sharprem}
Theorem \ref{sv:thm:main} and Proposition \ref{sv:prop:sharpness} together show that
the logarithmic order
$\bigl(\log(1+\NN)\bigr)^{1-n}$
is optimal up to dimensional constants. More precisely, no universal lower bound of asymptotically larger order
can hold: if $\Phi(N)$ satisfies
$$
\frac{\Phi(N)}
     {(\log(1+N))^{1-n}}
\longrightarrow\infty\qquad\text{ as } N\to\infty,
$$
then an estimate of the form
$$
\mathcal H^n\bigl(\{u>0\}\cap B_{\frac12}\bigr)
\ge c_n\Phi(\NN)
$$
cannot hold uniformly for all non-zero harmonic functions $u:B_1\to\R$ with $u(0)=0$.
\end{remark}

\appendix
\section{An alternative proof of Nadirashvili's conjecture}\label{sec:appendix}

This appendix develops an observation communicated by Greilhuber, Logunov and Sodin. We give an alternative proof of Nadirashvili's conjecture, recorded in Theorem \ref{logunovlowerbound}. The argument combines the complex analytic lower bound from Section \ref{sec:stable} with a weighted maximum of the frequency and a ball of almost minimal nodal density. It uses neither a frequency layer decomposition nor multiscale induction. The proof does not invoke Theorem \ref{logunovlowerbound}, Proposition  \ref{prop:gradient-upper} and Proposition \ref{prop:stable}, or Theorem \ref{thm:main-2}. 

For simplicity, we state and prove the following formulation on the unit ball $B_1$, from which Theorem \ref{logunovlowerbound} follows by translation and dilation.

\begin{theorem}[Nadirashvili's conjecture]
\label{thm:A1-unit}
For every $n\geq3$ there exists $c_n>0$ such that every non-zero real-valued
harmonic function $u$ in $B_1\subset\R^n$ with $u(0)=0$ satisfies
$$
  \HH^{n-1}\bigl(\{u=0\}\cap B_1\bigr)\geq c_n.
$$
\end{theorem}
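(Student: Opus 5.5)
We argue by contradiction and compactness. Suppose there are non-zero harmonic $u_k$ in $B_1$ with $u_k(0)=0$ and $\HH^{n-1}(\{u_k=0\}\cap B_1)\to0$. We want to locate a ball of almost minimal nodal density and bring the complex-analytic lower bound of Section~\ref{sec:stable} into play there. For a ball $B(x,r)$ with $u(x)=0$, set
$$\phi(x,r)=r^{1-n}\HH^{n-1}\bigl(\{u=0\}\cap B(x,r)\bigr).$$
Let $F(u)$ be the infimum of $\phi$ over zeros $x$ and radii $r$ with $B(x,r)\subset B_1$, weighted by a factor penalizing closeness to $\partial B_1$, for instance $(1-|x|-r)$-dependent. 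The weight must make near-infimizing balls sit well inside. The claim to prove is $\inf_u F(u)\ge c_n$. We pick an almost extremal ball and rescale it to $B_4$. Then every zero $y$ of the rescaled $u$ and every admissible subball $B(y,s)$ satisfy $\HH^{n-1}(\{u=0\}\cap B(y,s))\ge \tfrac12 F\, s^{n-1}$. This is exactly the input \eqref{eq:nadirashvili} used in Proposition~\ref{prop:gradient-upper}, with $c_n$ replaced by $F/2$. Running the same covering argument therefore gives
$$\int_{B_{8/5}}|\nabla\log|u||^{1/2}\le C_n\bigl(1+F^{-1}\HH^{n-1}(\{u=0\}\cap B_2)\bigr)^{1/2}.$$
Since the ball is almost minimal, $\HH^{n-1}(\{u=0\}\cap B_2)\lesssim F$, so the right-hand side is bounded by a dimensional constant, \emph{independent of how small $F$ is}.

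Next comes the complex-analytic lower bound. By the stable ball lemma (Lemma~\ref{lem:stable-ball}), applied to a weighted maximum of the frequency over the extremal ball, we pass to a ball with $S_n$-stable growth and large $\NN$. This requires choosing the weighting so that the new ball is again almost extremal, up to a bounded factor in $F$. On such a ball the left-hand inequality of \eqref{eq:stable-comparison} (from Lemma~\ref{lem:directions} and Corollary~\ref{cor:separation}) gives $\int_{B_{8/5}} g^{1/2}\ge c\,\NN^{1/2}$. Combined with the bounded upper moment, this forces $\NN\le C_n$. So on the extremal balls the doubling index is bounded.

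In the bounded-doubling regime, compactness applies. A limit of normalized $u_k$ (divided by $\sup_{B_1}|u_k|$) with bounded doubling converges in $C^\infty_{\rm loc}$ to a non-zero harmonic $u_\infty$ vanishing at the center. Its nodal set has positive $\HH^{n-1}$ measure near the origin, since it is a real analytic hypersurface with a regular part near any zero of a non-constant harmonic function. Lower semicontinuity of nodal measure under such convergence gives $\HH^{n-1}$ of the nodal set bounded below. By the implicit function theorem near a regular zero, this contradicts $F\to0$.

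The main obstacle is the extremal step. The weighted infimum must be designed so that the stable growth ball produced from an almost minimal ball is still almost minimal, and its zero-centered subballs must inherit the density lower bound $F/2$. Otherwise the moment upper bound cannot be made uniform. I would first try Logunov's weight $\bigl((1-|x|)/r\bigr)^{\!-\epsilon}$-type boundary factor, choosing the almost-minimizer among balls of maximal weighted frequency.
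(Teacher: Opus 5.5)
There is a genuine gap. Your architecture matches the paper's appendix: an almost minimal density ball, the covering bound for $\II$ with $c_n$ replaced by the density infimum, the complex-analytic lower bound on stable balls, and compactness at bounded doubling. But the step you flag as the main obstacle is where the proof actually lives, and your proposed fix points the wrong way.

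\textbf{The stable ball cannot, and need not, be almost extremal.} The ball $D=B(x_D,w)$ from Lemma~\ref{lem:stable-ball} need not be centered at a zero. Its nodal volume is bounded only by that of the whole extremal ball, i.e.\ by $C_nF$, not by $C_nFw^{n-1}$. Since $w$ may be very small, $D$ cannot in general be made ``again almost extremal.'' You do not need this. The missing idea is to push the stable-ball lower bound back to the extremal ball itself.
\begin{itemize}
\item $\II$ is monotone in the domain. For $H(y)=h(x_D+wy)$ one has $\II(h)\ge w^{n-\frac12}\II(H)$, since $B(x_D,\frac85 w)\subset 4D\subset B_{\frac85}$.
\item Choose the shell at a point $t_*$ maximizing $\beta_h(t)(\frac32-t)^{2n}$ on $[\frac{11}{10},\frac32]$. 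Set $v=\beta_h(t_*)$ and take $w$ a fixed fraction of $\frac32-t_*$. Then $v\le\beta_h\le 2v$ on the shell and $vw^{2n}\ge c_n\NN_h$, while $\NN_H\ge c_nvw$.
\item Hence $\II(h)\ge c_nw^{n-\frac12}(vw)^{\frac12}=c_n(vw^{2n})^{\frac12}\ge c_n\NN_h^{\frac12}$.
\end{itemize}
This is Lemma~\ref{lem:A-unrestricted}: $\NN_h\le C_n(1+\II(h)^2)$ for every harmonic $h$ in $B_4$, with no extremality assumption. Combined with your bounded upper moment on the extremal ball, it bounds $\NN_h$ there. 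You do invoke a weighted frequency maximum, but the exponent $2n$ is there to absorb the rescaling loss $w^{n-\frac12}$, not to keep the new ball near-minimal. Likewise no boundary weight on $\phi$ is needed. Taking the infimum over zero-centered balls contained in $B_{\frac12}$ already makes every zero-centered subball of a near minimizer admissible, as you yourself observe.

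\textbf{You never show $F(u)>0$ for a fixed $u$.} Without it, choosing a ball of density at most $2F$ and the factor $F^{-1}$ in the moment bound are meaningless. This positivity is a genuinely qualitative fact that must be proved before any uniform bound is available. The paper obtains it in Lemma~\ref{lem:A-compactness} by applying the bounded-doubling compactness statement at small scales:
\begin{itemize}
\item For fixed $u$, $p\mapsto\beta_{u,p}(\frac18)$ is continuous on $\overline{B_{\frac12}}$, hence bounded by some $\kappa_u$.
\item So every zero-centered ball of radius at most $\frac1{16}$ has doubling index at most $\log_2C_n+2\kappa_u$, and hence density at least a $u$-dependent constant.
\item Larger radii follow by inclusion.
\end{itemize}
Finiteness of $F(u)$ uses $\HH^{n-1}(\{u=0\}\cap B_1)<\infty$.

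\textbf{Smaller points.} Your compactness step via the implicit function theorem at a regular zero of the limit is a valid substitute for the paper's projection argument. However, it must be applied to the rescaled near-minimal functions, which have bounded doubling, not to the original $u_k$. The contradiction framing is also unnecessary: the argument runs for a single $u$ and yields $F(u)\ge c_n$ directly.
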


For a non-zero real-valued harmonic function $h$ in $B_1$, write
$$
\NN_h=\NN_h\bigl(B_{\frac12}\bigr).
$$
When $h$ is harmonic in $B_4$, we additionally define
$$
 \II(h)=\int_{B_{\frac85}}g_h^{\frac 12}\,dx,
$$
where $g_h=|\nabla h||h|^{-1}$ outside $\{h=0\}$ and $g_h=0$ on $\{h=0\}$.
The integral is initially allowed to be infinite.

We recall the following estimate, which was established without using Theorem \ref{logunovlowerbound}. Corollary \ref{cor:separation} and Lemmas \ref{lem:inner-decay}--\ref{lem:directions} imply that, for each fixed $S\geq1$, there exist constants
$c_{n,S}>0$ and $N_*(n,S)\geq1$, depending only on $n$ and $S$, such that
\begin{equation}\label{eq:A-stable-input}
 \II(h)\geq c_{n,S}\NN_h^{\frac12}
 \quad\text{if}\quad
 \NN_h(B_2)\leq S \NN_h,\qquad \NN_h\geq N_*(n,S).
\end{equation}
We recall the deduction, without invoking
Proposition \ref{prop:stable}.
Normalize $\sup_{B_{\frac32}}|h|=1$.
Let $b>0$ be the decay constant in
Lemma \ref{lem:inner-decay} and apply
Lemma \ref{lem:directions} with $\eps=\frac b8$.
It gives a set $E\subset\mathbb S^{n-1}$ with
$\sigma(E)\geq\delta_{n,S}>0$. For $\xi\in E$, the radial extension $f_\xi$ gives $\phi_\xi(\zeta)=f_\xi(\frac32+\frac\zeta 4)$, holomorphic in
$\mathbb D_2$ and real on $(-2,2)$, with
$$
 \sup_{\mathbb D_2}|\phi_\xi|\leq e^{C_{n,S}\NN_h},\qquad
 |\phi_\xi(0)|\geq e^{-\frac{b\NN_h}{8}},\qquad
 \sup_{[-\frac{7}{100},-\frac{3}{50}]}|\phi_\xi|\leq e^{-b\NN_h}.
$$
Corollary \ref{cor:separation} with $q=\frac 12$ and
$I=(-\frac{2}{25},\frac{2}{25})$ therefore yields
$$
 \int_{\frac{37}{25}}^{\frac{38}{25}}
 \bigl|\partial_r\log|h(r\xi)|\bigr|^{\frac 12}\,dr
 \geq c_{n,S}\NN_h^{\frac 12}
 \qquad(\xi\in E).
$$
Integrating over $E$ proves \eqref{eq:A-stable-input},
since $\bigl|\partial_r\log|h(r\xi)|\bigr|\leq g_h(r\xi)$
away from the zeros. We only use one-variable complex analysis, elementary frequency estimates, spherical harmonics, and a Remez type inequality \cite[Theorem 2]{Erdelyi1992Remez}. This argument does not use a nodal volume lower bound. No condition on $h(0)$ is required.

\begin{lemma}\label{lem:A-unrestricted}
Every non-zero real-valued harmonic function $h$ in $B_4$ satisfies
\begin{equation}\label{eq:A-unrestricted}
 \NN_h\leq C_n\bigl(1+\II(h)^2\bigr).
\end{equation}
\end{lemma}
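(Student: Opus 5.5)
We follow the proof of Theorem \ref{thm:main-2}, replacing the nodal volume of each stable ball by the fractional moment $\II$ and the input Proposition \ref{prop:stable} by \eqref{eq:A-stable-input}. We may assume $\II(h)<\infty$ and that $\NN_h$ exceeds a dimensional threshold. Otherwise \eqref{eq:A-unrestricted} is trivial once $C_n$ is enlarged.

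\emph{Step 1: layers and stable balls.} As in Steps 1--2 of that proof, set $v_0=\beta(\frac{11}{10})\ge c_n\NN_h$ by \eqref{eq:frequency-lower}. Form the layers $I_j$ and the widths $w_j=|I_j|/40$, and retain the indices $\mathcal J=\{j: v_jw_j^2\ge Q\}$. Here $Q$ is dimensional, chosen so that $\mathrm eQ\ge L_n$ and $50c_nQ\ge N_*(n,S_n)$. As before, $\sum_{j\in\mathcal J}w_j\ge\frac1{100}$ once $v_0$ is large. Lemma \ref{lem:stable-ball} then gives disjoint balls $D_j=B(x_j,w_j)$ with $\overline{4D_j}\subset\mathcal S_j\Subset B_2\setminus B_1$. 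Their indices satisfy $a_j=\NN(\frac{D_j}2)\ge c_nv_jw_j\ge N_*(n,S_n)$ and $\NN(2D_j)\le S_na_j$. This step uses no nodal volume bound, since Lemma \ref{lem:stable-ball} is quoted from \cite{LPS}.

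\emph{Step 2: local moment lower bounds.} Put $U_j(y)=h(x_j+w_jy)$, which is harmonic near $\overline{B_4}$ and has $S_n$-stable growth with $\NN_{U_j}=a_j\ge N_*$. By \eqref{eq:A-stable-input}, $\II(U_j)\ge c_n a_j^{1/2}$. Since $g_{U_j}(y)=w_j\,g_h(x_j+w_jy)$, a change of variables gives
$$\int_{B(x_j,\frac85w_j)}g_h^{\frac12}\,dx=w_j^{n-\frac12}\,\II(U_j)\ge c_n w_j^{n-\frac12}(v_jw_j)^{\frac12}=c_nv_j^{\frac12}w_j^{n}.$$
The balls $B(x_j,\frac85w_j)\subset 2D_j\subset B_2\setminus B_1$ are pairwise disjoint. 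The last point needs care: $\II(h)$ integrates only over $B_{8/5}$, whereas the shells may reach radius $\frac{19}{10}$. I would fix this by restricting the layer construction to $t\in[\frac{11}{10},\frac32]$, with the obvious changes of the fixed constants. Then every $2D_j\subset B_{8/5}$, and we obtain
$$\II(h)\ge c_n v_0^{\frac12}\sum_{j\in\mathcal J}10^{j/2}w_j^{n}.$$

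\emph{Step 3: weighted Hölder summation.} This step is the only real change from Theorem \ref{thm:main-2}: the weights are now $10^{j/2}$ rather than $10^j$. Hölder's inequality still gives
$$\sum_{j\in\mathcal J} w_j\le\Bigl(\sum_j 10^{j/2}w_j^n\Bigr)^{\frac1n}\Bigl(\sum_{j\ge0}10^{-\frac{j}{2(n-1)}}\Bigr)^{\frac{n-1}{n}},$$
and the second factor is a finite dimensional constant. Combined with $\sum_{j\in\mathcal J}w_j\ge\frac1{100}$, this yields $\sum_{j\in\mathcal J}10^{j/2}w_j^n\ge c_n$. Hence $\II(h)\ge c_nv_0^{1/2}\ge c_n\NN_h^{1/2}$, and squaring gives \eqref{eq:A-unrestricted}.

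I expect the main obstacle to be bookkeeping rather than a new idea. Three points need checking: that the rescaled stable balls satisfy \eqref{eq:A-stable-input} with a uniform threshold; that the radial range of the layers stays within $B_{8/5}$ while the separation and width sum are preserved; and that the fractional moment scales with the factor $w_j^{n-1/2}$ without loss.
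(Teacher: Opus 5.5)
Your argument is correct, but it takes a different route from the paper. You rerun the full frequency layer decomposition of Theorem \ref{thm:main-2} on $[\frac{11}{10},\frac32]$ and obtain one stable ball per retained layer. Via \eqref{eq:A-stable-input} and the $w_j^{n-\frac12}$ scaling, each ball gives the local bound $\int_{B(x_j,\frac85 w_j)}g_h^{1/2}\,dx\ge c_n v_j^{1/2}w_j^{n}$, and you close with H\"older's inequality against the weights $10^{j/2}$. The points you flag all go through: shrinking the radial range so that $2D_j\subset B_{8/5}$, the adjusted width sum, and the uniform threshold $N_*(n,S_n)$ through the choice of $Q$.

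The paper instead makes a single weighted selection. It picks $t_*$ maximizing $\beta_h(t)(\frac32-t)^{2n}$ on $[\frac{11}{10},\frac32]$, and sets $d=\frac32-t_*$ and $w=\eta d/20$ with $\eta=1-2^{-1/(2n)}$. Maximality and monotonicity then give at once $vw^{2n}\ge c_n\NN_h$ and $v\le\beta_h\le 2v$ on one shell. The width condition of Lemma \ref{lem:stable-ball} follows because $w^{2n-1}\log(1/w)$ is bounded. One stable ball and one application of \eqref{eq:A-stable-input} then give $\II(h)\ge c_n w^{n-\frac12}(vw)^{\frac12}=c_n(vw^{2n})^{\frac12}\ge c_n\NN_h^{\frac12}$. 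The exponent $2n$ in the weight is chosen exactly to absorb the scaling factor $w^{n-\frac12}\cdot w^{\frac12}$, so no summation over layers is needed.

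The paper's choice is shorter and delivers the structural point the appendix advertises: no frequency layer decomposition at all. Yours reuses the machinery of Section \ref{sec:main-proof} essentially verbatim and collects the contribution of every retained layer rather than of a single shell. Both are equally independent of Theorem \ref{logunovlowerbound}, since the layers, Lemma \ref{lem:stable-ball} and \eqref{eq:A-stable-input} use no nodal volume lower bound. So your version serves the appendix's logical purpose, just not its stated single-selection design.
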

\begin{proof}
We may assume that $\NN_h$ is sufficiently large and $\II(h)<\infty$.
Let $\beta_h$ be the frequency of $h$ introduced in
Section \ref{sec:stable}.
Choose $t_*\in[\frac{11}{10},\frac{3}{2})$ maximizing
$\beta_h(t)(\frac{3}{2}-t)^{2n}$ on $[\frac{11}{10},\frac{3}{2}]$. Put
$$
 v=\beta_h(t_*),\quad d=\frac{3}{2}-t_*,\quad
 \eta=1-2^{-\frac{1}{2n}},\quad
 w=\frac{\eta d}{20},\quad \rho=t_*+10w.
$$
Lemma \ref{lem:inner-decay} and frequency monotonicity \eqref{eq:frequency-monotonicity} give
$\beta_h(\frac{11}{10})\geq c_n\NN_h$.
Maximality and monotonicity of $\beta_h$ therefore imply
\begin{equation}\label{eq:A-shell}
 vw^{2n}\geq c_n\NN_h,
 \qquad
 v\leq\beta_h(t)\leq2v
 \quad \text{ for } t\in [\rho-10w, \rho+10w].
\end{equation}
Indeed, $vd^{2n}\geq\beta_h(\frac{11}{10})(\frac32-\frac{11}{10})^{2n}$, while on the indicated
interval,
$$
 \beta_h(t)\leq v\left(\frac{d}{\frac{3}{2}-t}\right)^{2n}
 \leq v(1-\eta)^{-2n}=2v.
$$
The resulting shell $\mathcal{S}_{h}:=\{x: \rho-10w\le |x|\le \rho+10w\}$ lies in $B_{\frac 32}\setminus B_{\frac{11}{10}}$,
and $w<1/20$.
Since $w^{2n-1}\log(\frac 1w)$ is bounded above,
$$
 \frac{vw}{\log(\frac 1w)}=\frac{vw^{2n}}{w^{2n-1}\log(\frac 1w)}\geq c_n\NN_h.
$$
Lemma~\ref{lem:stable-ball} supplies a ball $D=B(x_D,w)$
with $\overline{4D}$ in this shell $\mathcal{S}_h$ and
$$
 \NN_h\bigl(\frac D2\bigr)\geq c_nvw,
 \qquad
 \NN_h(2D)\leq S_n\NN_h\bigl(\frac D2\bigr).
$$
The proof of \cite[Lemma 6.5]{LPS} uses the radial growth estimates in \cite[Lemma 6.7]{LPS}, together with a maximum point construction, and does not invoke a nodal volume lower bound. In particular, by \eqref{eq:A-shell} and $0<w<1 $, $\NN_h\bigl(\frac D2\bigr)\geq c_n\NN_h$.
Applying \eqref{eq:A-stable-input} to
$H(y)=h(x_D+wy)$ and using \eqref{eq:A-shell}, we obtain
$$
 \II(h)\geq w^{n-\frac 12}\II(H)\ge c_nw^{n-\frac 12}\NN_h\bigl(\frac D2\bigr)^{\frac12}
 \geq c_n w^{n-\frac 12}(vw)^{\frac 12}
 =c_n(vw^{2n})^{\frac 12}
 \geq c_n\NN_h^{\frac 12}.
$$
This proves \eqref{eq:A-unrestricted}.
\end{proof}

\begin{lemma}\label{lem:A-density-upper}
Let $h$ be a non-zero real-valued harmonic function in $B_4$. Suppose $m>0$ satisfies
\begin{equation}\label{eq:A-density-hyp}
 \HH^{n-1}\bigl(\{h=0\}\cap B(z,t)\bigr)\geq mt^{n-1}
 \quad\text{whenever }h(z)=0\text{ and }B(z,t)\subset B_2.
\end{equation}
Then
\begin{equation}\label{eq:A-density-upper}
 \II(h)^2\leq C_n\bigl(1+m^{-1}\HH^{n-1}\bigl(\{h=0\}\cap B_2\bigr)\bigr).
\end{equation}
\end{lemma}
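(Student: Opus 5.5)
The plan is to rerun the proof of Proposition~\ref{prop:gradient-upper} with the density hypothesis \eqref{eq:A-density-hyp} in place of Theorem~\ref{logunovlowerbound}, and with exponent $q=\frac12$. Write $\mathcal Z=\HH^{n-1}\bigl(\{h=0\}\cap B_2\bigr)$; we may assume $\mathcal Z<\infty$.

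Step 1 of that proof uses only the gradient estimate \eqref{eq:positive-gradient} for positive harmonic functions on a zero-free ball. It therefore gives, with no nodal input,
\[
g_h(x)\le \frac{C_n}{\min\{\frac1{20},d(x)\}},\qquad x\in B_{\frac85}\setminus\{h=0\},
\]
where $d(x)=\dist\bigl(x,\{h=0\}\cap B_2\bigr)$.

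For Step 2, fix $0<t<\frac1{20}$ and take a maximal $t$-separated set $z_1,\dots,z_k$ in $\{h=0\}\cap B_{\frac95}$. Each ball $B(z_i,\frac t3)$ is centered at a zero of $h$ and lies in $B_2$. Hypothesis \eqref{eq:A-density-hyp} therefore gives $\HH^{n-1}\bigl(\{h=0\}\cap B(z_i,\frac t3)\bigr)\ge m(\frac t3)^{n-1}$. Since these balls are disjoint, $k\le C_n m^{-1}t^{1-n}\mathcal Z$. The covering argument \eqref{eq:tube-cover} then yields
\[
\bigl|\{x\in B_{\frac85}:d(x)<t\}\bigr|\le C_n m^{-1}t\,\mathcal Z .
\]

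Step 3 is the layer-cake computation. For $\lambda\ge\lambda_0(n)$ we get the distribution bound
\[
|\{g_h>\lambda\}|\le\min\bigl\{|B_{\frac85}|,\,C_n m^{-1}\lambda^{-1}\mathcal Z\bigr\}.
\]
Set $\lambda_*=C_n\bigl(1+m^{-1}\mathcal Z\bigr)\ge\lambda_0$ and integrate $\frac12\lambda^{-1/2}$ times this bound. The result is
\[
\II(h)\le |B_{\frac85}|\lambda_*^{1/2}+C_n m^{-1}\mathcal Z\,\lambda_*^{-1/2}\le C_n\bigl(1+m^{-1}\mathcal Z\bigr)^{1/2}.
\]
Squaring gives \eqref{eq:A-density-upper}.

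There is no real obstacle here. The one point to verify is that every application of the density hypothesis involves a ball centered at a zero and contained in $B_2$; this holds because the $z_i$ lie in $B_{\frac95}$ and $t<\frac1{20}$. The $m$-dependence then enters linearly through the count of packing balls.
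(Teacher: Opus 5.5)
Your proposal is correct and follows the paper's proof essentially step for step. Both use the positive-gradient bound on zero-free balls and the packing count in $\{h=0\}\cap B_{\frac95}$, where the density hypothesis replaces Theorem~\ref{logunovlowerbound} and produces the factor $m^{-1}$. Both then bound the tube volume and finish with the layer-cake split at $\lambda_*\asymp_n 1+m^{-1}\HH^{n-1}(\{h=0\}\cap B_2)$ for the exponent $\frac12$.
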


\begin{proof}
If $\HH^{n-1}(\{h=0\}\cap B_2)=\infty$, the conclusion is immediate. We may therefore assume that $\HH^{n-1}(\{h=0\}\cap B_2)<\infty$.
Set $d(x)=\dist(x,\{h=0\}\cap B_2)$, with $d(x)=\infty$ if the set is empty.
For $x\in B_{\frac 85}\setminus \{h=0\}$, set $R=\min\{\frac1{20},\frac{d(x)}2\}$.
The ball $B(x,R)\subset B_2$ is zero-free, so either $h$ or $-h$
is positive there. The gradient estimate for positive harmonic functions gives
$$
 g_h(x)\leq \frac{C_n}R\leq\frac{C_n}{\min\{\frac1{20},d(x)\}}.
$$
For $0<t<\frac1{20}$, take a maximal $t$-separated family
$\{z_1,\ldots,z_k\}\subset \{h=0\}\cap B_{\frac95}$ with $|z_i-z_j|\ge t$, $i\ne j$.
The balls $B(z_i,\frac t3)$ are disjoint and contained in $B_2$, so
\eqref{eq:A-density-hyp} gives 
$$k\leq C_nm^{-1}t^{1-n}\HH^{n-1}(\{h=0\}\cap B_2).$$
Moreover, $\{x\in B_{\frac 85}:d(x)<t\}$ is covered by the balls $B(z_i,2t)$.
Hence
$$
 |\{x\in B_{\frac 85}:d(x)<t\}|\leq C_nm^{-1}t\HH^{n-1}(\{h=0\}\cap B_2).
$$
Choose $\lambda_0(n)$ large enough that if $g_h(x)>\lambda_0(n)$, then $\min\{\frac1{20},d(x)\}=d(x)$. Consequently, for $\lambda\geq\lambda_0(n)$,
$$
 |\{g_h>\lambda\}\cap B_{\frac85}|
 \leq\min\{\VV(B_{\frac85}),C_nm^{-1}\lambda^{-1}\HH^{n-1}(\{h=0\}\cap B_2)\}.
$$
The layer-cake formula, split at
$\lambda_*\asymp_n1+m^{-1}\HH^{n-1}(\{h=0\}\cap B_2)$ with $\lambda_*\geq\lambda_0(n)$, yields
$$
 \II(h)\leq C_n\lambda_*^{\frac12}
       +C_nm^{-1}\lambda_*^{-\frac12}\HH^{n-1}(\{h=0\}\cap B_2)
 \leq C_n(1+m^{-1}\HH^{n-1}(\{h=0\}\cap B_2))^{\frac12}.
$$
This completes the proof.
\end{proof}

\begin{lemma}\label{lem:A-compactness}
For every finite $\kappa\geq0$ there is $c(n,\kappa)>0$ such that every non-zero real-valued harmonic function $h$ in $B_1$ with $h(0)=0$ and
$$
 \NN_h\leq \kappa
$$
satisfies $\HH^{n-1}(\{h=0\}\cap B_1)\geq c(n,\kappa)$.
Furthermore, for each fixed non-zero real-valued harmonic function $u$ in $B_1$ with $u(0)=0$ and $\HH^{n-1}(\{u=0\}\cap B_1)<\infty$, the number
\begin{equation}\label{eq:A-fixed-density}
 m(u):=\inf_{\substack{u(p)=0,\ r>0\\B(p,r)\subset B_{\frac 12}}}
       r^{1-n}\HH^{n-1}(\{u=0\}\cap B(p,r))
\end{equation}
satisfies $0<m(u)<\infty$. Its positive lower bound at this stage may depend on $u$.
\end{lemma}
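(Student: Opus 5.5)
The first assertion will come from a compactness argument. Suppose it fails for some $\kappa$. Then there are non-zero harmonic $h_j$ in $B_1$ with $h_j(0)=0$, $\NN_{h_j}\le\kappa$, and $\HH^{n-1}(\{h_j=0\}\cap B_1)\to0$. Normalize $\sup_{B_{1/2}}|h_j|=1$, so that $\sup_{B_1}|h_j|\le 2^{\kappa}$. Interior estimates then give, after passing to a subsequence, $h_j\to h$ in $C^\infty_{\mathrm{loc}}(B_1)$. The limit $h$ is harmonic, $h(0)=0$, and $\sup_{\overline{B_{1/2}}}|h|=1$, so $h\not\equiv0$.

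Since $h$ is a non-zero harmonic function vanishing at $0$, it changes sign near $0$. Hence $\{h=0\}$ contains a regular point $x_0\in B_{1/4}$ with $\nabla h(x_0)\ne0$; such points exist because the singular set $\{h=|\nabla h|=0\}$ has Hausdorff dimension at most $n-2$, while the zero set of a sign-changing function separates the positive and negative regions. Near $x_0$, the $C^1$ convergence and the implicit function theorem show that $\{h_j=0\}$ contains graphs over a fixed $(n-1)$-disk of radius $r_0$, for all large $j$. This gives $\HH^{n-1}(\{h_j=0\}\cap B_1)\ge c r_0^{n-1}>0$, a contradiction. This step uses only the normalization and no nodal lower bound, so it is consistent with the appendix's aim.

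For the second assertion, the upper bound $m(u)<\infty$ is immediate. Take $p=0$ and $r=\frac14$; then $B(0,\frac14)\subset B_{1/2}$, and the quantity is at most $4^{n-1}\HH^{n-1}(\{u=0\}\cap B_1)<\infty$.

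For $m(u)>0$, I would apply the first assertion at every admissible pair $(p,r)$. Set $h(y)=u(p+ry)$, which is harmonic in $B_1$ with $h(0)=0$. Its zero set in $B_1$ is the rescaled set $\{u=0\}\cap B(p,r)$, so $r^{1-n}\HH^{n-1}(\{u=0\}\cap B(p,r))=\HH^{n-1}(\{h=0\}\cap B_1)$. It therefore suffices to bound $\NN_h=\NN_u(B(p,r/2))$ uniformly over all $p\in B_{1/2}$ and $r>0$ with $B(p,r)\subset B_{1/2}$.

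This uniform bound on the doubling index is the main point, and it is where the fixed $u$ enters. All such balls $B(p,r)$ lie in $B_{1/2}$, a compact subset of the domain $B_1$. For a fixed non-zero harmonic $u$, the standard almost-monotonicity of the doubling index (via Almgren's frequency, translated to center $p$ as in Section \ref{sec:stable}) gives
\[
\NN_u\bigl(B(p,r/2)\bigr)\le C_n\,\NN_u\bigl(B(p,\tfrac{1}{8})\bigr)+C_n
\]
for $r\le\frac14$. The right-hand side is bounded uniformly in $p\in\overline{B_{1/2}}$. Indeed, $u$ is non-zero on every ball by unique continuation, and the ratio $\sup_{B(p,1/4)}|u|/\sup_{B(p,1/8)}|u|$ is continuous in $p$ and finite, so compactness of $\overline{B_{1/2}}$ bounds it. Setting $\kappa(u)$ equal to this bound, the first assertion gives $m(u)\ge c(n,\kappa(u))>0$.

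The only care needed is to make sure the frequency-monotonicity comparison stays inside $B_1$ for the radii used. Since $B(p,r)\subset B_{1/2}$ forces $r\le\frac12$ and the comparison balls have radius at most $\frac14$ around $p\in B_{1/2}$, this holds.
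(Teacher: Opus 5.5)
Your proposal follows the paper's plan: contradiction plus $C^\infty_{\mathrm{loc}}$ compactness for the bounded-index statement, a uniform bound at scale $\frac18$ from continuity in $p$ and frequency monotonicity, and the admissible ball $B_{\frac14}$ for $m(u)<\infty$. However, your proof of $m(u)>0$ does not cover every admissible ball.

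Your comparison $\NN_u(B(p,\frac r2))\le C_n\NN_u(B(p,\frac18))+C_n$ is stated for $r\le\frac14$. With dimensional constants it is only available there. For $\frac14<r\le\frac12$ it would bound a larger-scale doubling index by a smaller-scale one, and that fails in general. For example, take $u=x_1+3^k\re\bigl((x_1+ix_2)^k\bigr)$ and $p=0$: the index at scale $\frac18$ stays bounded, while the index at scale $\frac14$ grows like $k$. Yet $B(p,r)\subset B_{\frac12}$ allows $r$ up to $\frac12$ whenever $|p|<\frac14$. So your $\kappa(u)$ gives no information on these balls, and the inequality $m(u)\ge c(n,\kappa(u))$ is not yet justified over the full infimum.

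The repair is one line, and it is how the paper closes this case: the paper proves the small-scale bound for $r\le\frac1{16}$ and treats larger $r$ by inclusion. For $\frac14<r\le\frac12$, the ball $B(p,\frac14)$ is itself admissible and lies in $B(p,r)$, so
\[
 r^{1-n}\HH^{n-1}\bigl(\{u=0\}\cap B(p,r)\bigr)\geq r^{1-n}\HH^{n-1}\bigl(\{u=0\}\cap B(p,\tfrac14)\bigr)\geq (4r)^{1-n}c(n,\kappa(u))\geq 2^{1-n}c(n,\kappa(u)).
\]
Alternatively, $(p,r)\mapsto\sup_{B(p,r)}|u|/\sup_{B(p,r/2)}|u|$ is continuous and positive on the compact set $\{\frac14\le r\le\frac12,\ |p|+r\le\frac12\}$, hence bounded.

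For the first assertion, your route via a regular zero of the limit and the implicit function theorem is correct. It does import two facts, though: the singular set $\{h=|\nabla h|=0\}$ is $\HH^{n-1}$-null, and an $\HH^{n-1}$-null closed set cannot separate a ball.

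The paper avoids both. It takes closed balls $\overline{B(x_\pm,\delta)}\subset B_{\frac12}$ on which the limit has opposite strict signs, and transfers these signs to $h_j$ by uniform convergence. Every segment parallel to $x_+-x_-$ joining the two balls then meets $\{h_j=0\}$. Hence the $1$-Lipschitz projection of $\{h_j=0\}\cap B_1$ onto $(x_+-x_-)^\perp$ contains an $(n-1)$-disk of radius $\delta$. This uses only $C^0$ convergence, and it is essentially the standard proof of the non-separation fact you invoke.

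At small scales, the paper works directly with $\kappa_u=\sup_{p}\beta_{u,p}(\frac18)$. This gives $\sup_{B(p,r)}|u|/\sup_{B(p,r/2)}|u|\le C_n4^{\kappa_u}$ for $r\le\frac1{16}$, which is equivalent to your almost-monotonicity step.
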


\begin{proof}
Suppose that the first assertion fails. Then there is a sequence of non-zero real-valued harmonic functions $h_j$ in $B_1$ such that
$$
 h_j(0)=0,\qquad
 \NN_{h_j}\leq\kappa,\qquad
 \HH^{n-1}(\{h_j=0\}\cap B_1)\longrightarrow0.
$$
Multiplying each $h_j$ by a non-zero constant, we may normalize
$$
 \sup_{B_{\frac{1}{2}}}|h_j|=1,
 \qquad
 \sup_{B_1}|h_j|\leq2^\kappa.
$$
Interior compactness for harmonic functions gives a subsequence converging in $C^\infty_{\mathrm{loc}}(B_1)$ to a harmonic function
$h_\infty$ with
$$
 h_\infty(0)=0,
 \qquad
 \sup_{B_{\frac{1}{2}}}|h_\infty|=1.
$$
In particular, $h_\infty$ is non-zero. The strong maximum principle implies that $h_\infty$ takes both signs in $B_{\frac{1}{2}}$. Choose two closed balls $\overline{B(x_\pm,\delta)}\subset B_{\frac{1}{2}}$ on which $h_\infty$ has opposite strict signs. By uniform convergence, the same signs hold for $h_j$ when $j$ is sufficiently large.

Set
$$
 \nu=\frac{x_+-x_-}{|x_+-x_-|}.
$$
For every $y\in \nu^\perp$ with $|y|<\delta$, the segment joining
$x_-+y$ to $x_++y$ lies in $B_{\frac{1}{2}}$ and meets
$\{h_j=0\}$. Hence the orthogonal projection of
$\{h_j=0\}\cap B_1$ onto $\nu^\perp$ contains a disk of radius
$\delta$. Since orthogonal projection is $1$-Lipschitz,
$$
 \HH^{n-1}(\{h_j=0\}\cap B_1)
 \geq\omega_{n-1}\delta^{n-1}>0,
$$
contradicting the choice of $h_j$.  This is the
elementary projection argument in \cite[Claim 2.3]{LPS}.

For the second assertion, keep $u$ fixed. Let $L_p(s)$ and $\beta_{u, p}(s)$ denote its normalized spherical $L^2$-mean and frequency centered at $p$, respectively. The function
$p\mapsto\beta_{u, p}(\frac{1}{8})$ is continuous and finite on $\overline{{B}_{\frac{1}{2}}}$. Indeed, all relevant spheres lie compactly in $B_1$, and the denominator in the frequency cannot vanish: otherwise $u$ would vanish on the corresponding sphere, hence in its interior by uniqueness of the Dirichlet problem, and then throughout $B_1$ by unique continuation. Consequently,
$$
 \kappa_u:=\sup_{p\in\overline{{B}_{\frac{1}{2}}}}
       \beta_{u, p}\left(\frac{1}{8}\right)<\infty.
$$

Applying the spherical mean comparison
\eqref{eq:poisson-comparison}, the frequency identity \eqref{eq:frequency-ratio}, and monotonicity \eqref{eq:frequency-monotonicity}, all centered at $p$, we have
$$
 \begin{aligned}
 \frac{\sup_{B(p,r)}|u|}
      {\sup_{B(p,\frac{r}{2})}|u|}
 &\leq C_n\frac{L_p(2r)}{L_p(\frac{r}{2})}=C_n\exp\left(
       \int_{\frac{r}{2}}^{2r}
       \frac{\beta_{u, p}(s)}{s}\,ds
       \right)\\
 &\leq C_n4^{\kappa_u},
 \qquad 0<r\leq\frac{1}{16},
 \end{aligned}
$$
where we take $C_n\geq1$. Thus, whenever $u(p)=0$, the rescaled function $y\mapsto u(p+ry)$ has doubling index at most $\log_2 C_n+2\kappa_u$. The first assertion therefore yields
$$
 \HH^{n-1}(\{u=0\}\cap B(p,r))
 \geq c_u r^{n-1},
 \qquad 0<r\leq\frac{1}{16},
$$
where $c_u=c(n,\log_2 C_n+2\kappa_u)>0$ is independent of $p$ and $r$.

We call a ball $B(q, s)$ admissible if $u(q)=0$, $s>0$ and $B(q, s)\subset B_{\frac12}$. If an admissible ball in \eqref{eq:A-fixed-density} has
$r>\frac{1}{16}$, then $r\leq\frac{1}{2}$ and
$B(p,\frac{1}{16})\subset B(p,r)$. Hence
$$
 r^{1-n}\HH^{n-1}(\{u=0\}\cap B(p,r))
 \geq c_u\left(\frac{1}{16r}\right)^{n-1}
 \geq c_u\left(\frac{1}{8}\right)^{n-1}>0.
$$
Together with the previous small scale estimate, this proves $m(u)>0$.
Finally, $B_{\frac{1}{4}}$ is admissible, and
$$
 m(u)
 \leq4^{n-1}
      \HH^{n-1}(\{u=0\}\cap B_{\frac{1}{4}})
 <\infty
$$
by the assumed finiteness of
$\HH^{n-1}(\{u=0\}\cap B_1)$.
\end{proof}

\begin{proof}[Proof of Theorem~\ref{thm:A1-unit}]
We may assume $\HH^{n-1}(\{u=0\}\cap B_1)<\infty$.
By Lemma~\ref{lem:A-compactness}, $m=m(u)>0$.
Choose an admissible ball $B=B(p,r)$ with
$$
 \HH^{n-1}(\{u=0\}\cap B)\leq2mr^{n-1},
$$
and set $h(y)=u(p+\frac{ry}{4})$. Then $h$ is harmonic near $\overline {B_4}$, $h(0)=0$, and
\begin{equation}\label{eq:A-near-min}
 \HH^{n-1}(\{h=0\}\cap B_4)\leq2\cdot4^{n-1}m.
\end{equation}
Every zero-centered ball contained in $B_2$ rescales to an admissible ball in \eqref{eq:A-fixed-density}, so \eqref{eq:A-density-hyp} holds for $h$ with this same $m$. \eqref{eq:A-near-min}, Lemma \ref{lem:A-density-upper} and Lemma \ref{lem:A-unrestricted} therefore yield
$$
 \II(h)^2\leq C_n\left(1+
 \frac{\HH^{n-1}(\{h=0\}\cap B_2)}{m}\right)\leq C_n,
 \qquad \NN_h\leq C_n.
$$
The bounded index assertion of Lemma \ref{lem:A-compactness}, now with a dimensional index bound, implies $\HH^{n-1}(\{h=0\}\cap B_1)\geq c_n$. Together with
\eqref{eq:A-near-min}, this gives $m(u)\geq c_n$.
Finally, since $B_\frac14$ is admissible,
$$
 \HH^{n-1}(\{u=0\}\cap B_1)
 \geq\HH^{n-1}(\{u=0\}\cap B_{\frac14})
 \geq4^{1-n}m(u)\geq c_n.
$$
This completes the proof.
\end{proof}

All results invoked from the preceding sections and from \cite{LPS} are established without using Theorem \ref{logunovlowerbound} or any result whose proof depends on it. Moreover, the argument uses neither a frequency layer decomposition
nor multiscale induction. Instead, it relies on a single weighted frequency selection and an almost minimal density argument.

\bibliography{bib}

@article {LPS,
    AUTHOR = {Logunov, Alexander and Lakshmi Priya, M. E. and Sartori,
              Andrea},
     TITLE = {Almost sharp lower bound for the nodal volume of harmonic
              functions},
   JOURNAL = {Comm. Pure Appl. Math.},
  FJOURNAL = {Communications on Pure and Applied Mathematics},
    VOLUME = {77},
      YEAR = {2024},
    NUMBER = {12},
     PAGES = {4328--4389},
      ISSN = {0010-3640,1097-0312},
   MRCLASS = {31B05},
  MRNUMBER = {4814921},
MRREVIEWER = {Vladimir\ Eiderman},
       DOI = {10.1002/cpa.22207},
       URL = {https://doi.org/10.1002/cpa.22207},
}

@book {ABR,
    AUTHOR = {Axler, Sheldon and Bourdon, Paul and Ramey, Wade},
     TITLE = {Harmonic function theory},
    SERIES = {Graduate Texts in Mathematics},
    VOLUME = {137},
   EDITION = {Second},
 PUBLISHER = {Springer-Verlag, New York},
      YEAR = {2001},
     PAGES = {xii+259},
      ISBN = {0-387-95218-7},
   MRCLASS = {31-01 (30-01 46Exx)},
  MRNUMBER = {1805196},
       DOI = {10.1007/978-1-4757-8137-3},
       URL = {https://doi.org/10.1007/978-1-4757-8137-3},
}

@incollection {Yau1982,
    AUTHOR = {Yau, Shing Tung},
     TITLE = {Problem section},
 BOOKTITLE = {Seminar on {D}ifferential {G}eometry},
    SERIES = {Ann. of Math. Stud., No. 102},
     PAGES = {669--706},
 PUBLISHER = {Princeton Univ. Press, Princeton, NJ},
      YEAR = {1982},
      ISBN = {0-691-08268-5},
   MRCLASS = {53Cxx (58-02)},
  MRNUMBER = {645762},
MRREVIEWER = {Yu.\ Burago},
}

@article {DonnellyFefferman1988,
    AUTHOR = {Donnelly, Harold and Fefferman, Charles},
     TITLE = {Nodal sets of eigenfunctions on {R}iemannian manifolds},
   JOURNAL = {Invent. Math.},
  FJOURNAL = {Inventiones Mathematicae},
    VOLUME = {93},
      YEAR = {1988},
    NUMBER = {1},
     PAGES = {161--183},
      ISSN = {0020-9910,1432-1297},
   MRCLASS = {58G25 (35B60 35P05)},
  MRNUMBER = {943927},
MRREVIEWER = {P.\ G\"{u}nther},
       DOI = {10.1007/BF01393691},
       URL = {https://doi.org/10.1007/BF01393691},
}

@article {Bruning1978,
    AUTHOR = {Br\"{u}ning, Jochen},
     TITLE = {\"{U}ber {K}noten von {E}igenfunktionen des
              {L}aplace-{B}eltrami-{O}perators},
   JOURNAL = {Math. Z.},
  FJOURNAL = {Mathematische Zeitschrift},
    VOLUME = {158},
      YEAR = {1978},
    NUMBER = {1},
     PAGES = {15--21},
      ISSN = {0025-5874,1432-1823},
   MRCLASS = {58G99 (53C20)},
  MRNUMBER = {478247},
MRREVIEWER = {Sh\^{u}kichi\ Tanno},
       DOI = {10.1007/BF01214561},
       URL = {https://doi.org/10.1007/BF01214561},
}

@article {ColdingMinicozzi2011,
    AUTHOR = {Colding, Tobias H. and Minicozzi, II, William P.},
     TITLE = {Lower bounds for nodal sets of eigenfunctions},
   JOURNAL = {Comm. Math. Phys.},
  FJOURNAL = {Communications in Mathematical Physics},
    VOLUME = {306},
      YEAR = {2011},
    NUMBER = {3},
     PAGES = {777--784},
      ISSN = {0010-3616,1432-0916},
   MRCLASS = {58J50 (28A78 35P15 35P20)},
  MRNUMBER = {2825508},
MRREVIEWER = {Julie\ Rowlett},
       DOI = {10.1007/s00220-011-1225-x},
       URL = {https://doi.org/10.1007/s00220-011-1225-x},
}

@article {SoggeZelditch2011,
    AUTHOR = {Sogge, Christopher D. and Zelditch, Steve},
     TITLE = {Lower bounds on the {H}ausdorff measure of nodal sets},
   JOURNAL = {Math. Res. Lett.},
  FJOURNAL = {Mathematical Research Letters},
    VOLUME = {18},
      YEAR = {2011},
    NUMBER = {1},
     PAGES = {25--37},
      ISSN = {1073-2780},
   MRCLASS = {58J50 (28A78 35P15 35R01)},
  MRNUMBER = {2770580},
MRREVIEWER = {Nelia\ Charalambous},
       DOI = {10.4310/MRL.2011.v18.n1.a3},
       URL = {https://doi.org/10.4310/MRL.2011.v18.n1.a3},
}

@article {SoggeZelditch2012,
    AUTHOR = {Sogge, Christopher D. and Zelditch, Steve},
     TITLE = {Lower bounds on the {H}ausdorff measure of nodal sets {II}},
   JOURNAL = {Math. Res. Lett.},
  FJOURNAL = {Mathematical Research Letters},
    VOLUME = {19},
      YEAR = {2012},
    NUMBER = {6},
     PAGES = {1361--1364},
      ISSN = {1073-2780,1945-001X},
   MRCLASS = {58C40 (28A78 35P15 35R01)},
  MRNUMBER = {3091613},
MRREVIEWER = {Nelia\ Charalambous},
       DOI = {10.4310/MRL.2012.v19.n6.a14},
       URL = {https://doi.org/10.4310/MRL.2012.v19.n6.a14},
}

@article {Steinerberger2014,
    AUTHOR = {Steinerberger, Stefan},
     TITLE = {Lower bounds on nodal sets of eigenfunctions via the heat
              flow},
   JOURNAL = {Comm. Partial Differential Equations},
  FJOURNAL = {Communications in Partial Differential Equations},
    VOLUME = {39},
      YEAR = {2014},
    NUMBER = {12},
     PAGES = {2240--2261},
      ISSN = {0360-5302,1532-4133},
   MRCLASS = {58J50 (35J05 35K08 35P15 35R01)},
  MRNUMBER = {3259555},
MRREVIEWER = {Jia-Yong\ Wu},
       DOI = {10.1080/03605302.2014.942739},
       URL = {https://doi.org/10.1080/03605302.2014.942739},
}

@article {Logunov2018,
    AUTHOR = {Logunov, Alexander},
     TITLE = {Nodal sets of {L}aplace eigenfunctions: proof of
              {N}adirashvili's conjecture and of the lower bound in {Y}au's
              conjecture},
   JOURNAL = {Ann. of Math. (2)},
  FJOURNAL = {Annals of Mathematics. Second Series},
    VOLUME = {187},
      YEAR = {2018},
    NUMBER = {1},
     PAGES = {241--262},
      ISSN = {0003-486X,1939-8980},
   MRCLASS = {58J50 (35J05 35P15 35P20 35R01)},
  MRNUMBER = {3739232},
MRREVIEWER = {Leonid\ Friedlander},
       DOI = {10.4007/annals.2018.187.1.5},
       URL = {https://doi.org/10.4007/annals.2018.187.1.5},
}

@article {Dong1992,
    AUTHOR = {Dong, Rui-Tao},
     TITLE = {Nodal sets of eigenfunctions on {R}iemann surfaces},
   JOURNAL = {J. Differential Geom.},
  FJOURNAL = {Journal of Differential Geometry},
    VOLUME = {36},
      YEAR = {1992},
    NUMBER = {2},
     PAGES = {493--506},
      ISSN = {0022-040X,1945-743X},
   MRCLASS = {58G25 (35P99)},
  MRNUMBER = {1180391},
MRREVIEWER = {Stig\ I.\ Andersson},
       URL = {http://projecteuclid.org/euclid.jdg/1214448750},
}

@article {DonnellyFefferman1990,
    AUTHOR = {Donnelly, Harold and Fefferman, Charles},
     TITLE = {Nodal sets for eigenfunctions of the {L}aplacian on surfaces},
   JOURNAL = {J. Amer. Math. Soc.},
  FJOURNAL = {Journal of the American Mathematical Society},
    VOLUME = {3},
      YEAR = {1990},
    NUMBER = {2},
     PAGES = {333--353},
      ISSN = {0894-0347,1088-6834},
   MRCLASS = {58G25 (35P05)},
  MRNUMBER = {1035413},
MRREVIEWER = {H.-B.\ Rademacher},
       DOI = {10.2307/1990956},
       URL = {https://doi.org/10.2307/1990956},
}

@article {HardtSimon1989,
    AUTHOR = {Hardt, Robert and Simon, Leon},
     TITLE = {Nodal sets for solutions of elliptic equations},
   JOURNAL = {J. Differential Geom.},
  FJOURNAL = {Journal of Differential Geometry},
    VOLUME = {30},
      YEAR = {1989},
    NUMBER = {2},
     PAGES = {505--522},
      ISSN = {0022-040X,1945-743X},
   MRCLASS = {58E05 (35J99)},
  MRNUMBER = {1010169},
MRREVIEWER = {Fang\ Hua\ Lin},
       URL = {http://projecteuclid.org/euclid.jdg/1214443599},
}

@article {Nadirashvili1988,
    AUTHOR = {Nadirashvili, Nikolai S.},
     TITLE = {The length of the nodal curve of an eigenfunction of the
              {L}aplace operator},
   JOURNAL = {Uspekhi Mat. Nauk},
  FJOURNAL = {Akademiya Nauk SSSR i Moskovskoe Matematicheskoe Obshchestvo.
              Uspekhi Matematicheskikh Nauk},
    VOLUME = {43},
      YEAR = {1988},
    NUMBER = {4(262)},
     PAGES = {219--220},
      ISSN = {0042-1316},
   MRCLASS = {58G25 (35P05)},
  MRNUMBER = {969585},
MRREVIEWER = {G.\ V.\ Rozenblum},
       DOI = {10.1070/RM1988v043n04ABEH001905},
       URL = {https://doi.org/10.1070/RM1988v043n04ABEH001905},
}

@article {Lin1991,
    AUTHOR = {Lin, Fang-Hua},
     TITLE = {Nodal sets of solutions of elliptic and parabolic equations},
   JOURNAL = {Comm. Pure Appl. Math.},
  FJOURNAL = {Communications on Pure and Applied Mathematics},
    VOLUME = {44},
      YEAR = {1991},
    NUMBER = {3},
     PAGES = {287--308},
      ISSN = {0010-3640,1097-0312},
   MRCLASS = {58G11 (35J05 35K05 58G03)},
  MRNUMBER = {1090434},
MRREVIEWER = {Robert\ McOwen},
       DOI = {10.1002/cpa.3160440303},
       URL = {https://doi.org/10.1002/cpa.3160440303},
}

@article {GarofaloLin1986,
    AUTHOR = {Garofalo, Nicola and Lin, Fang-Hua},
     TITLE = {Monotonicity properties of variational integrals, {$A_p$}
              weights and unique continuation},
   JOURNAL = {Indiana Univ. Math. J.},
  FJOURNAL = {Indiana University Mathematics Journal},
    VOLUME = {35},
      YEAR = {1986},
    NUMBER = {2},
     PAGES = {245--268},
      ISSN = {0022-2518,1943-5258},
   MRCLASS = {35J20 (35J10 42B25)},
  MRNUMBER = {833393},
MRREVIEWER = {Stavros\ A.\ Belbas},
       DOI = {10.1512/iumj.1986.35.35015},
       URL = {https://doi.org/10.1512/iumj.1986.35.35015},
}

@article {Han,
    AUTHOR = {Han, Qing},
     TITLE = {Nodal sets of harmonic functions},
   JOURNAL = {Pure Appl. Math. Q.},
  FJOURNAL = {Pure and Applied Mathematics Quarterly},
    VOLUME = {3},
      YEAR = {2007},
    NUMBER = {3, Special Issue: In honor of Leon Simon. Part 2},
     PAGES = {647--688},
      ISSN = {1558-8599,1558-8602},
   MRCLASS = {31B05},
  MRNUMBER = {2351641},
MRREVIEWER = {Stephen\ J.\ Gardiner},
       DOI = {10.4310/PAMQ.2007.v3.n3.a2},
       URL = {https://doi.org/10.4310/PAMQ.2007.v3.n3.a2},
}

@article{NazarovPolterovichSodin2005,
    AUTHOR = {Nazarov, F\"{e}dor and Polterovich, Leonid and Sodin, Mikhail},
     TITLE = {Sign and area in nodal geometry of {L}aplace eigenfunctions},
   JOURNAL = {Amer. J. Math.},
  FJOURNAL = {American Journal of Mathematics},
    VOLUME = {127},
      YEAR = {2005},
    NUMBER = {4},
     PAGES = {879--910},
      ISSN = {0002-9327,1080-6377},
   MRCLASS = {58J50 (35J25 35P20)},
  MRNUMBER = {2154374},
MRREVIEWER = {Alessandro\ Savo},
       URL =
              {http://muse.jhu.edu/journals/american_journal_of_mathematics/v127/127.4nazarov.pdf},
}

@article {RoyFortin2015,
    AUTHOR = {Roy-Fortin, Guillaume},
     TITLE = {Nodal sets and growth exponents of {L}aplace eigenfunctions on
              surfaces},
   JOURNAL = {Anal. PDE},
  FJOURNAL = {Analysis \& PDE},
    VOLUME = {8},
      YEAR = {2015},
    NUMBER = {1},
     PAGES = {223--255},
      ISSN = {2157-5045,1948-206X},
   MRCLASS = {58J50 (35P05 35R01)},
  MRNUMBER = {3336925},
MRREVIEWER = {Mohammed\ El A\"{\i}di, Universidad Nacional de Colombia},
       DOI = {10.2140/apde.2015.8.223},
       URL = {https://doi.org/10.2140/apde.2015.8.223},
}

@incollection {LogunovMalinnikovaReview2019,
    AUTHOR = {Logunov, Alexander and Malinnikova, Eugenia},
     TITLE = {Review of {Y}au's conjecture on zero sets of {L}aplace
              eigenfunctions},
 BOOKTITLE = {Current developments in mathematics 2018},
     PAGES = {179--212},
 PUBLISHER = {Int. Press, Somerville, MA},
      YEAR = {2020},
      ISBN = {978-1-57146-387-6},
   MRCLASS = {58J50 (35P30)},
  MRNUMBER = {4363378},
}

@article {Kukavica1998,
    AUTHOR = {Kukavica, Igor},
     TITLE = {Quantitative uniqueness for second-order elliptic operators},
   JOURNAL = {Duke Math. J.},
  FJOURNAL = {Duke Mathematical Journal},
    VOLUME = {91},
      YEAR = {1998},
    NUMBER = {2},
     PAGES = {225--240},
      ISSN = {0012-7094,1547-7398},
   MRCLASS = {35J15},
  MRNUMBER = {1600578},
MRREVIEWER = {Philip\ W.\ Schaefer},
       DOI = {10.1215/S0012-7094-98-09111-6},
       URL = {https://doi.org/10.1215/S0012-7094-98-09111-6},
}

@article {HardtEtAl1999,
    AUTHOR = {Hardt, Robert and Hoffmann-Ostenhof, Maria and Hoffmann-Ostenhof, Thomas
              and Nadirashvili, Nikolai S.},
     TITLE = {Critical sets of solutions to elliptic equations},
   JOURNAL = {J. Differential Geom.},
  FJOURNAL = {Journal of Differential Geometry},
    VOLUME = {51},
      YEAR = {1999},
    NUMBER = {2},
     PAGES = {359--373},
      ISSN = {0022-040X,1945-743X},
   MRCLASS = {35J15 (35B65)},
  MRNUMBER = {1728303},
MRREVIEWER = {Rolando\ Magnanini},
       URL = {http://projecteuclid.org/euclid.jdg/1214425070},
}

@article {Mangoubi2013,
    AUTHOR = {Mangoubi, Dan},
     TITLE = {The effect of curvature on convexity properties of harmonic
              functions and eigenfunctions},
   JOURNAL = {J. Lond. Math. Soc. (2)},
  FJOURNAL = {Journal of the London Mathematical Society. Second Series},
    VOLUME = {87},
      YEAR = {2013},
    NUMBER = {3},
     PAGES = {645--662},
      ISSN = {0024-6107,1469-7750},
   MRCLASS = {58J50 (35J15 35P20 35R01 53C21 58E20)},
  MRNUMBER = {3073669},
MRREVIEWER = {Tanya\ J.\ Christiansen},
       DOI = {10.1112/jlms/jds067},
       URL = {https://doi.org/10.1112/jlms/jds067},
}

@article {NaberValtorta2017,
    AUTHOR = {Naber, Aaron and Valtorta, Daniele},
     TITLE = {Volume estimates on the critical sets of solutions to elliptic
              {PDE}s},
   JOURNAL = {Comm. Pure Appl. Math.},
  FJOURNAL = {Communications on Pure and Applied Mathematics},
    VOLUME = {70},
      YEAR = {2017},
    NUMBER = {10},
     PAGES = {1835--1897},
      ISSN = {0010-3640,1097-0312},
   MRCLASS = {35J15 (35A20 35R01)},
  MRNUMBER = {3688031},
MRREVIEWER = {Xinhua\ Ji},
       DOI = {10.1002/cpa.21708},
       URL = {https://doi.org/10.1002/cpa.21708},
}

@article{Nadi,
  author  = {Nadirashvili, Nikolai S.},
  title   = {Geometry of nodal sets and multiplicity of eigenvalues},
  journal = {Current Developments in Mathematics},
  volume  = {1997},
  number  = {1},
  pages   = {231--235},
  year    = {1997},
  doi     = {10.4310/CDM.1997.v1997.n1.a16}
}

@book {Conway,
    AUTHOR = {Conway, John B.},
     TITLE = {Functions of one complex variable},
    SERIES = {Graduate Texts in Mathematics},
    VOLUME = {11},
   EDITION = {Second},
 PUBLISHER = {Springer-Verlag, New York-Berlin},
      YEAR = {1978},
     PAGES = {xiii+317},
      ISBN = {0-387-90328-3},
   MRCLASS = {30-01},
  MRNUMBER = {503901},
MRREVIEWER = {P.\ Lappan},
}

@article {TY,
    AUTHOR = {Tikhonov, Sergey Yu. and Yuditskii, Peter},
     TITLE = {Sharp {R}emez inequality},
   JOURNAL = {Constr. Approx.},
  FJOURNAL = {Constructive Approximation. An International Journal for
              Approximations and Expansions},
    VOLUME = {52},
      YEAR = {2020},
    NUMBER = {2},
     PAGES = {233--246},
      ISSN = {0176-4276,1432-0940},
   MRCLASS = {41A17 (30C35 41A44 41A50)},
  MRNUMBER = {4170300},
MRREVIEWER = {Wies\l aw\ Ple\'{s}niak},
       DOI = {10.1007/s00365-019-09473-2},
       URL = {https://doi.org/10.1007/s00365-019-09473-2},
}

@article {Logunov2018-2,
    AUTHOR = {Logunov, Alexander},
     TITLE = {Nodal sets of {L}aplace eigenfunctions: polynomial upper
              estimates of the {H}ausdorff measure},
   JOURNAL = {Ann. of Math. (2)},
  FJOURNAL = {Annals of Mathematics. Second Series},
    VOLUME = {187},
      YEAR = {2018},
    NUMBER = {1},
     PAGES = {221--239},
      ISSN = {0003-486X,1939-8980},
   MRCLASS = {58J50 (35J05 35P20 35R01)},
  MRNUMBER = {3739231},
MRREVIEWER = {Leonid\ Friedlander},
       DOI = {10.4007/annals.2018.187.1.4},
       URL = {https://doi.org/10.4007/annals.2018.187.1.4},
}

@misc{LiYing-1,
      title={Lower Bound of Nodal Sets in Elliptic Homogenization and Functions with Strong Maximum Principle}, 
      author={Jiahuan Li and Zhichen Ying},
      year={arXiv:2512.12305, 2025},
      eprint={2512.12305},
      archivePrefix={arXiv},
      primaryClass={math.AP},
      url={https://arxiv.org/abs/2512.12305}, 
}

@misc{LiWangYing,
      title={Nadirashvili' Conjecture for Elliptic PDEs and its Applications}, 
      author={Jiahuan Li and Junyuan Wang and Zhichen Ying},
      year={arXiv:2508.07861, 2025},
      eprint={2508.07861},
      archivePrefix={arXiv},
      primaryClass={math.AP},
      url={https://arxiv.org/abs/2508.07861}, 
}

@article{Gelfond1934,
  author  = {Gelfond, Alexander},
  title   = {{\"U}ber die harmonischen {Funktionen}},
  journal = {Trav. Inst. Stekloff},
  volume  = {5},
  year    = {1934},
  pages   = {149--158}
}

@article {Erdelyi1992Remez,
    AUTHOR = {Erd\'{e}lyi, Tam\'{a}s},
     TITLE = {Remez-type inequalities on the size of generalized
              polynomials},
   JOURNAL = {J. London Math. Soc. (2)},
  FJOURNAL = {Journal of the London Mathematical Society. Second Series},
    VOLUME = {45},
      YEAR = {1992},
    NUMBER = {2},
     PAGES = {255--264},
      ISSN = {0024-6107,1469-7750},
   MRCLASS = {41A17},
  MRNUMBER = {1171553},
MRREVIEWER = {Adam\ Piotr\ W\'{o}jcik},
       DOI = {10.1112/jlms/s2-45.2.255},
       URL = {https://doi.org/10.1112/jlms/s2-45.2.255},
}
\bibliographystyle{plain}

\end{document}